\numberwithin{equation}{section} 
\def\author@andify{%
  \nxandlist {\unskip ,\penalty-1 \space\ignorespaces}%
    {\unskip {} \@@and~}%
    {\unskip \penalty-2 \space \@@and~}%
}
\title{Elementary Equivalence Theorem for PAC structures}
\author[J. DOBROWOLSKI]{Jan Dobrowolski$^{\spadesuit}$}
\thanks{2010 \textit{Mathematics Subject Classification}. 
Primary 03C95;
Secondary 03C45, 03C50.}
\thanks{\textit{Key words and phrases}. pseudo-algebraically closed structures, Galois groups}
\thanks{$^{\spadesuit}$
The first author is supported  by European Union's Horizon 2020 research
and innovation  programme under the Marie Sklodowska-Curie grant agreement No 705410, and by the Foundation for Polish Science (FNP)}
\address{$^{\spadesuit}$Instytut Matematyczny, Uniwersytet Wroc\l awski, Wroc\l aw, Poland\newline
\indent {\em and}\newline
\indent School of Mathematics, University of Leeds, Leeds, UK}
\email{dobrowol@math.uni.wroc.pl}
\email{J.Dobrowolski@leeds.ac.uk}
\urladdr{http://www.math.uni.wroc.pl/~dobrowol/}
\author[D. M. HOFFMANN]{Daniel Max Hoffmann$^{\dagger}$}
\thanks{$^{\dagger}$SDG. The second author is supported by the Narodowe Centrum Nauki grants no. 2016/21/N/ST1/01465,
and 2015/19/B/ST1/01150.}
\address{$^{\dagger}$ Instytut Matematyki\\
Uniwersytet Warszawski\\
Warszawa\\
Poland
\newline \indent {\em and}\newline
\indent \hspace{1mm} Department of Mathematics \\ University of Notre Dame \\ Notre Dame \\ IN \\ USA}
\email{daniel.max.hoffmann@gmail.com}
\urladdr{https://www.researchgate.net/profile/Daniel\_Hoffmann8}
\author[J. LEE]{Junguk Lee$^{\ast}$}
\thanks{$^{\ast}$The third author is supported by the Narodowe Centrum Nauki grant no. 2016/22/E/ST1/00450}
\address{$^{\ast}$Instytut Matematyczny\\
Uniwersytet Wroc{\l}awski\\
Wroc{\l}aw\\
Poland}
\email{jlee@math.uni.wroc.pl} 
\urladdr{https://sites.google.com/site/leejunguk0323/}
\DeclareMathOperator{\acl}{acl} \DeclareMathOperator{\dcl}{dcl} 
 \DeclareMathOperator{\aut}{Aut} \DeclareMathOperator{\id}{id}
\DeclareMathOperator{\dom}{dom} \DeclareMathOperator{\gal}{Gal}
 \DeclareMathOperator{\theo}{Th}\DeclareMathOperator{\alg}{alg}
 \DeclareMathOperator{\eq}{eq}
\DeclareMathOperator{\tp}{tp}
\DeclareMathOperator{\mor}{Mor}
\DeclareMathOperator{\ddf}{DF}\DeclareMathOperator{\dcf}{DCF}\DeclareMathOperator{\scf}{SCF}
\DeclareMathOperator{\acf}{ACF}
\DeclareMathOperator{\qftp}{qftp}
\DeclareMathOperator{\res}{res}
\newtheorem{theorem}{Theorem}[section]
\newtheorem{prop}[theorem]{Proposition}
\newtheorem{lemma}[theorem]{Lemma}
\newtheorem{cor}[theorem]{Corollary}
\newtheorem{fact}[theorem]{Fact}
\theoremstyle{definition}
\newtheorem{definition}[theorem]{Definition}
\newtheorem{example}[theorem]{Example}
\newtheorem{remark}[theorem]{Remark}
\newtheorem{question}[theorem]{Question}
\newtheorem{notation}[theorem]{Notation}
\theoremstyle{remark}
\newtheorem*{theorem*}{Theorem}
\newtheorem*{cor*}{Corollary}
\theoremstyle{definition}
\theoremstyle{definition}
\theoremstyle{definition}
\theoremstyle{remark}
\providecommand*{\cupdot}{%
  \mathbin{%
    \mathpalette\@cupdot{}%
  }%
}
\newcommand*{\@cupdot}[2]{%
  \ooalign{%
    $\m@th#1\cup$\cr
    \sbox0{$#1\cup$}%
    \dimen@=\ht0 %
    \sbox0{$\m@th#1\cdot$}%
    \advance\dimen@ by -\ht0 %
    \dimen@=.5\dimen@
    \hidewidth\raise\dimen@\box0\hidewidth
  }%
}
\providecommand*{\bigcupdot}{%
  \mathop{%
    \vphantom{\bigcup}%
    \mathpalette\@bigcupdot{}%
  }%
}
\newcommand*{\@bigcupdot}[2]{%
  \ooalign{%
    $\m@th#1\bigcup$\cr
    \sbox0{$#1\bigcup$}%
    \dimen@=\ht0 %
    \advance\dimen@ by -\dp0 %
    \sbox0{\scalebox{2}{$\m@th#1\cdot$}}%
    \advance\dimen@ by -\ht0 %
    \dimen@=.5\dimen@
    \hidewidth\raise\dimen@\box0\hidewidth
  }%
}
\def\Ind#1#2{#1\setbox0=\hbox{$#1x$}\kern\wd0\hbox to 0pt{\hss$#1\mid$\hss}
\lower.9\ht0\hbox to 0pt{\hss$#1\smile$\hss}\kern\wd0}
\def\ind{\mathop{\mathpalette\Ind{}}}
\def\notind#1#2{#1\setbox0=\hbox{$#1x$}\kern\wd0
\hbox to 0pt{\mathchardef\nn=12854\hss$#1\nn$\kern1.4\wd0\hss}
\hbox to 0pt{\hss$#1\mid$\hss}\lower.9\ht0 \hbox to 0pt{\hss$#1\smile$\hss}\kern\wd0}
\def\checkmark{\tikz\fill[scale=0.4](0,.35) -- (.25,0) -- (1,.7) -- (.25,.15) -- cycle;}
\def\y{\bar{y}}
\def\z{\bar{z}}
\def\a{\bar{a}}
\def\d{\bar{d}}
\def\f{\bar{f}}
\begin{document}

\newcommand{\ov}{\overline}
\newcommand{\FC}{\mathfrak{C}}

\newcommand{\twoc}[3]{ {#1} \choose {{#2}|{#3}}}
\newcommand{\thrc}[4]{ {#1} \choose {{#2}|{#3}|{#4}}}
\newcommand{\Rr}{{\mathds{R}}}
\newcommand{\Kk}{{\mathds{K}}}

\newcommand{\dlog}{\mathrm{ld}}
\newcommand{\ga}{\mathbb{G}_{\rm{a}}}
\newcommand{\gm}{\mathbb{G}_{\rm{m}}}
\newcommand{\gaf}{\widehat{\mathbb{G}}_{\rm{a}}}
\newcommand{\gmf}{\widehat{\mathbb{G}}_{\rm{m}}}
\newcommand{\gdf}{\mathfrak{g}-\ddf}
\newcommand{\gdcf}{\mathfrak{g}-\dcf}
\newcommand{\fdf}{F-\ddf}
\newcommand{\fdcf}{F-\dcf}
\newcommand{\mw}{\scf_{\text{MW},e}}

\newcommand{\BC}{{\mathbb C}}

\newcommand{\CC}{{\mathcal C}}
\newcommand{\CG}{{\mathcal G}}
\newcommand{\CL}{{\mathcal L}}
\newcommand{\CN}{{\mathcal N}}
\newcommand{\CS}{{\mathcal S}}
\newcommand{\CU}{{\mathcal U}}
\newcommand{\CF}{{\mathcal F}}
\newcommand{\CP}{{\mathcal P}}
\newcommand{\CI}{{\mathcal I}}

\begin{abstract}
We generalize a well-known theorem binding the elementary equivalence relation on the level of PAC fields and the isomorphism class of their absolute Galois groups. Our results concern two cases: saturated PAC structures and non-saturated PAC structures. 
%Finally, we provide an invariant of the theory of existentially closed substructures in a stable monster model.
\end{abstract}
\maketitle

%\tableofcontents
\section{Introduction}
%It is well known that countable $\omega$-categorical structures $M$ and $N$ are bi-inter\-pre\-table if and only if $\aut(M)$ and $\aut(N)$ are isomorphic as topological groups. Similarly, in the case when, additionally, $M$ and $N$ have  the same universe, $M$ and $N$ are bidefinable if and only if $\aut(M)$ and $\aut(N)$ are isomorphic (i.e. equal as permutation groups). 
%In this article, we establish a link between structures and their automorphism groups in the style of the aforementioned well-known facts.
\emph{Pseudo-algebraically closed} (PAC) fields were extensively studied in the second half of the 20th century. They were ``discovered" in \cite{Ax1} and \cite{Ax2}, but the name ``PAC fields" was given in \cite{FREY}. A field $K$ is PAC if and only if each nonempty absolutely irreducible $K$-variety has a $K$-rational point. Equivalently, it is existentially closed in every regular extension (compare with Definition \ref{def:PAC}). In \cite{ershov1980} and in \cite{cherlindriesmacintyre}, authors propose the name ``regularly closed fields", which in our case is more appropriate, since algebraically closed structures are not necessarily PAC structures in the sense of our Definition \ref{def:PAC} (thus algebraically closed structures can be non pseudo-algebraically closed). Moreover, because there is no useful model-theoretic generalization of the notion of separable extension of fields, we are forced to work only with definably closed substructures, which correspond to perfect fields. Therefore our definition of a PAC structure implies being definably closed (and being a perfect field, in the case of fields).

PAC fields are very attractive to model theorists (e.g. \cite{ershov1980}, \cite{CDM81}, \cite{cherlindriesmacintyre}, \cite{ChaPil}, \cite{chatzidakis2002}, 
\cite{chahru04}), since their logical and algebraic structure is, to a large extent, controlled by their absolute Galois groups. 
A key property of PAC fields - so-called ``Elementary Equivalence Theorem" - is stated in Theorem 20.3.3 in \cite{FrJa} and in Proposition 33 in \cite{cherlindriesmacintyre}. Roughly speaking, two PAC fields have the same first order theory provided they have isomorphic Galois groups. The inverse system of finite quotients of a Galois group can be organized into a first order structure and thus there exists a way of assigning a first order theory to a given absolute Galois group. In Proposition 33 in \cite{cherlindriesmacintyre}, both theories, the theory of a PAC field and the theory of its absolute Galois group, were related to each other.
The connection between the two theories is even more sophisticated, as observed in \cite{chatzidakis2017}, where the author provides a path between independence on the level of a PAC field and on the level of an absolute Galois group (see also \cite{HL1}).
Moreover, because of the newly discovered links between PAC fields and the notion of an NSOP$_1$ structure, PAC fields have been studied very recently in the model-theoretic neostability context (e.g. model theory of Frobenius fields in \cite{kaplanramsey2017}).

On the other hand, the notion of a PAC field was generalized to the context of strongly minimal theories in \cite{manuscript}, and then, to the context of stable theories in \cite{PilPol}. We use yet
a different and slightly more general
definition of a PAC structure, which was given in \cite{Hoff3} (see Section 3.1 in \cite{Hoff3}, which compares all known to us definitions of a PAC structure). In addition to the examples of PAC structures introduced in \cite{PilPol}, a general method of obtaining PAC structures 
as existentially closed structures equipped with 
%as fixed points of 
certain group actions is described in \cite{Hoff3}.

In \cite{Polkowska}, it is shown that under the assumption that \emph{PAC is a first order property} (Definition 3.3 in \cite{PilPol}, compare to Definition \ref{def:PAC.first.order}) the theory of bounded (saturated) PAC structures (in the case of a stable theory) is simple. This extends the results from \cite{manuscript}, where, in the case of a strongly minimal theory, bounded PAC structures are proven to be supersimple of SU-rank $1$ (``Bounded" means that the absolute Galois group is small as a profinite group). Hence - similarly as for PAC fields - there is an interesting connection between the
model-theoretic properties of a PAC structure and the complexity of its absolute Galois group. 
This phenomenon has motivated our interest in whether the Elementary Equivalence Theorem for PAC fields can be generalized to the class of PAC structures, and we have achieved such a generalization in two variants: 
for saturated PAC structures in Proposition \ref{lemma2023} (and Corollary \ref{cor2033})
and 
for non-saturated PAC structures in Theorem \ref{thm:elementary_invariance} (see also Corollary \ref{final_cor}).

Several technical difficulties arise in our study due to the absence of some of the tools available in the PAC fields case, such as the notion of a homomorphism of algebras, on which a key ingredient - Lemma 20.1.1a) in \cite{FrJa} - was based. Another problem one needs to deal with is that, in the case of a many-sorted language, the behavior of absolute Galois groups when taking ultraproducts of structures is much more complicated than in the case of one-sorted structures (such as fields).

The paper is organized as follows:

In Section \ref{sec:definitions}, we provide definitions and basic facts about PAC structures in the stable context. 
%We analyze there what is needed to obtain a saturated elementary extension of a PAC structure which is also a PAC structure.

In Section \ref{sec:saturated}, we prove the first of our main results, Proposition \ref{lemma2023}, where we are assuming that PAC substructures are somewhat saturated.
Previous results on PAC substructures in the stable context (\cite{PilPol}, \cite{Polkowska}) also assumed saturation, hence our result might be seen as a continuation of the preceding research in the subject.
%the assumption about saturation in Proposition \ref{lemma2023} seems to be quite natural. 
%We end Section \ref{sec:saturated} with an observation about relation between Proposition \ref{lemma2023} and Lascar Galois groups.

%In Section \ref{sec:sort_preserving}, we introduce a notion of \emph{sort-preservation} which is crucial in the proof of the most technical part of this paper, mainly of Lemma \ref{lemma2031}. 
%Proposition \ref{prop:bounded_sort_preserving} shows that \emph{sort-preservation} is more common at lower levels of complexity of considered Galois groups.
In Section \ref{sec:primitive_element} and Section \ref{sec:non_saturated_PAC}, we consider general PAC structures, not necessarily saturated. We reduce the general case to the saturated case by taking ultraproducts. In Section \ref{sec:primitive_element}, we briefly introduce the primitive element theorem for structures to handle ultraproducts of Galois groups of (many sorted) structures.
The central point of Section \ref{sec:non_saturated_PAC} is the proof of Lemma \ref{lemma2031},
which allows us to lift isomorphisms between absolute Galois groups of structures to absolute Galois groups of ultraproducts of these structures. Lemma \ref{lemma2031} is used in the proof of the second main result of this paper, Theorem \ref{thm:elementary_invariance}. Theorem \ref{thm:elementary_invariance} is the expected Elementary Equivalence Theorem for Structures.

In the appendix, we study several variants of the notion of a sorted isomorphism considered in Section \ref{sec:non_saturated_PAC}; in particular, we notice that the results of Section \ref{sec:non_saturated_PAC} remain true in a bigger generality, obtained by a weakening of the assumption of sortedness.

%The goal of this paper was to achieve Elementary Equivalence Theorem for Structures and, since its proof is quite long, we decided not to extend the paper too much and to
%leave the reader without examples. The reader interested in examples can find several of them in \cite{PilPol}. Moreover,
%results of \cite{Hoff3} also give us also a reasonable source of examples of PAC structures, since every existentially closed, equipped with a group action, substructure of some ambient stable structure is PAC. 
%Finally, a reader interested in applications of the main results might use simplified versions of these results provided in Section \ref{sec:EC} (which address a more common case of existentially closed substructures).

We fix a stable theory $T_0$ in a language $\mathcal{L}_0$, and we set $T:=(T_0^{\eq})^m$ which is a theory in the language $\mathcal{L}:=(\mathcal{L}_0^{\eq})^m$ (we add imaginary sorts, and then we do the Morleyisation). Note that $T$ is stable, has quantifier elimination and elimination of imaginaries.
Moreover, we fix a monster model $\mathfrak{C}\models T$ and assume that $T=\theo(\mathfrak{C})$
(in particular, we assume that $T$ is complete).

\section{PAC substructures}\label{sec:definitions}
%In this section, we describe a procedure which produces an elementary extension of a PAC structure which is PAC and saturated. However, to get such an extension, additional assumptions are needed. 
%More precisely, we use that \emph{PAC is a first order property} and that \emph{saturation over $P$ is a first order property} (Definition \ref{def:PAC.first.order}) in Fact \ref{fact:saturated_PAC_str}.
For a more detailed exposition of the notions of regularity and PAC structures, the reader may consult Section 3.1 in \cite{Hoff3}.

\begin{definition}\label{regular.def}
Let $E\subseteq A$ be small subsets of $\mathfrak{C}$. We say that $E\subseteq A$ is \emph{$\mathcal{L}$-regular} (or just \emph{regular}) if
$$\dcl(A)\cap\acl(E)=\dcl(E).$$
\end{definition}

\begin{definition}\label{def:PAC}
Assume that $M\preceq\mathfrak{C}$ and $P$ is a substructure of $M$. 
%\begin{enumerate}
%\item
 We say that \emph{$P$ is PAC in $M$} if for every regular extension $N$ of $P$ in $M$ (i.e. $N\subseteq M$ and $N$ is regular over $P$), the structure $P$ is existentially closed in $N$.
We say that \emph{$P$ is PAC} if $P$ is PAC in any $M'\succeq M$ (or equivalently $P$ is PAC in $\mathfrak{C}$).
%\end{enumerate}
\end{definition}

\begin{definition}[Definition 3.1 in \cite{PilPol}]\label{def:kPAC}
Assume that $M\preceq\mathfrak{C}$ and $P$ is a substructure of $M$.
We say that $P$ is $\kappa$-PAC substructure in $M$ if whenever $A\subseteq P$ is of the size strictly less than $\kappa$ and $p(x)$ is a (complete) stationary type over $A$ (in the sense of $M$), then $p(x)$ is realized by a tuple from $P$.
\end{definition}

\noindent Being a $\kappa$-PAC substructure implies being a PAC substructure. See Section 3.1 in \cite{Hoff3} for a detailed comparison of the above definitions. Moreover, the notion of a PAC substructure from \cite{PilPol} (which corresponds to a $|T|^+$-PAC substructure in the above sense) is different from our notion of a PAC substructure.

We state our definition of regularity in a language expanded by imaginaries (recall that $T=(T_0^{\eq})^m$), which is motivated by a correspondence to stationarity (e.g. Lemma 3.36 in \cite{Hoff3}) and the fact that we are working with Galois groups. Besides this, regularity considered in the language with imaginaries is more natural to us (e.g. Remark 3.2.(2) in \cite{Hoff3}). 
One could ask: 
is being existentially closed in all regular extensions preserved after adding imaginaries?
%maybe the notion of regularity changes after adding imaginaries, but being existentially closed in a regular extension means the same?
The following two examples show that, in a theory without EI, existential closedness of a set $A$ in its all regular extensions does not imply the same about $\dcl^{eq}(A)$
after adding imaginaries. We provide a detailed argument only in the second example, the statements in the first example can be proven similarly.

\begin{example}
Consider a saturated two-sorted structure $M=(X,Y,E,F,f)$ with sorts $X$ and $Y$, described as follows.
$E$ is an equivalence relation on $X$ with 3 classes: $X_1,X_2,X_3$ (we do NOT add them to the language).
$F$ is an equivalence relation on $X$ with all classes infinite such that
every $F$-class either is contained in $X_1$ or is contained in $X_2\cup X_3$ and has infinite intersections with both $X_2$ and $X_3$.
Also, $X_i/F$ is infinite for any $i\leq 3$. Finally, 
$Y=X/E$ and $f:X\to Y$ is the quotient map.
Let $A_0$ be a small set contained in $X_1$ intersecting infinitely many $F$-classes and having empty or infinite intersection with any $F$-class,
and put $A:=A_0\cup \{a/E\}$, where $a$ is any element of $A_0$. Then $A$ is existentially closed in its all regular extensions in $M$, 
but $A':=\dcl^{eq}(A)$ is not existentially closed in some regular extension (namely, in $\dcl^{eq}(A'\cup \{d/F\})$, where $d\in X_2$ is such that $d/F\notin A'$), i.e. it is not PAC after adding imaginaries.
\end{example}

\begin{example}
Consider a monster model $F$ of ACF$_p$, where $p$ is zero or a prime, and let $A\subseteq F$ be a small perfect PAC field.
% containing $\acl(\emptyset)$.
Consider a two-sorted structure $M:=(F,F)$, with no additional structure between the sorts (only the field structure on each of the sorts).
Let $B=(A, acl(A))$ be a substructure of $M$. Then $B$ is existentially closed in any regular extension, but 
$A':=\dcl^{eq}(B)$ is not PAC in $M^{eq}$.
\end{example}

\begin{proof}
It is easy to see that any regular extension of $B$ in $M$ intersected with each sort is a regular extension of the intersection of $B$ 
with that sort, and hence, by q.e., $B$ is existentially closed in any such extension. It remains to show that $A'$
is not PAC.
Let $\mathcal{M}$ be the structure obtained by expanding $M$
%by finite 
by sorts for finite sets of compatible finite tuples of elements of $M$ (together with the projections onto these sorts),
and by a sort containing only one element $c_0$. 
\
\\
\textbf{Claim 1.}
 $\mathcal{M}$ admits EI.
\
\\ Proof of the claim: 
It is enough to check that the assumptions of \cite[Theorem 3.1]{Jo} with $K:=M$ and $\mathcal{G}:=\mathcal{M}$ are satisfied. The first condition follows, as a definable set is contained in one of the sorts of $M$, and the condition holds in $ACF_p$ (as a definable set has only finitely many generic types). For the second condition, as in the previous example, any type $$q(x_1,\dots,x_n,y_1,\dots,y_m)$$ with $x_i$ of the first sort and $y_i$ of the second sort is definable over the union of sets of definitions of  $q_{|x_1,\dots,x_n}$ and  $q_{|y_1,\dots,y_m}$ (which exist in $ACF_p$). The third condition holds trivially again. Here ends the proof of the first claim.
Due to the claim, we may identify elements of $M^{eq}$ with elements of  $\mathcal{M}$.
Take any $a\in acl(A)\backslash A$ in the first sort and any $b\notin acl(A)$ in the second sort. Let $a=a_1,\dots, a_n$ be all conjugates of $a$ over $B$,
and let $b=b_1,\dots, b_n$ be a Morley sequence in  $tp(b/B)$. Consider the sort $M_E$ of $n$-element sets of pairs, where the first coordinate
in each pair is from the first sort, and the second coordinate is from the second sort ($E$ is the equivalence relation on tuples of such pairs identifying tuples being
permutations of each other). Put $d:=\{(a_i, b_i):i\leq n\}\in M_E$. 
Let $\psi(x_E)$ be the formula saying that there are $x_1,\dots,x_n,y_1,\dots,y_n$ such that $x_1,\dots,x_n$ are pairwise distinct realizations of
the formula isolating $tp(a/A)$, the elements  $y_1,\dots,y_n$ are pairwise distinct, and 
 $x_E=((x_1,y_1),\dots,(x_n,y_n))/E$ ($=\{(x_1,y_1),\dots,(x_n,y_n)\}$). Clearly, $\psi(x_E)$ is a formula over $A'$ in the language of $ \mathcal{M}$.
\
\\
\textbf{Claim 2.}
 $\psi(x_E)$ is not satisfied in $A'$.
\
\\ Proof of the claim: 
Consider any realization of $\psi(x_E)$; it must be of the form $\{(a_1,c_1),\dots,(a_n,c_n)\}$. Then  $c_i$'s are pairwise distinct, so every automorphism, which
moves $a_1$ and fixes $B$ and fixes the whole second sort pointwise, will move $\{(a_1,c_1),\dots,(a_n,c_n)\}$ (to a set in which $a_1$ will occur in pair with $c_i$ for some $i\neq 1$).
Thus $\{(a_1,c_1),\dots,(a_n,c_n)\}\notin dcl^{eq}(B)=A'$.
Here ends the proof of the second claim.

Clearly, $\psi(x_E)$ is satisfied by $d$, so, by the second claim, $A'$ is not existentially closed in  $D':=\dcl^{eq}(B,d)$. 
So, in order to conclude that $A'$ is not PAC in $M^{eq}$, it remains to show that $D'$ is a regular extension of $A'$.
\
\\
\textbf{Claim 3.}
Every automorphism of the first sort of $M$ over $A$ extends to an automorphism of $M$ fixing $Bd$ pointwise.
\
\\ Proof of the claim: 
Let $f$ be any automorphism of the first sort of $M$ over $A$. Then $f(a_1,\dots,a_n)=(a_{\sigma (1)},\dots,a_{\sigma(n)})$ for some $\sigma\in S_n$. As $(d_1,\dots, d_n)$ is a Morley sequence,
it is indiscernible as a set, so there is  an automorphism $g$
of the second sort over $\acl(A)$ which sends  $(b_1,\dots, b_n)$ to $(b_{\sigma (1)},\dots,b_{\sigma(n)})$. Then $f\cup g$ fixes $Bd$.
Here ends the proof of the third claim.

Suppose $e=\{(e_1,f_1),\dots,(e_m,f_m)\} \in D'\cap (acl(A'))$, where $e_i$'s are compatible tuples of elements of the first sort, and
$f_i$'s are compatible tuples of elements of the second sort. We will show that $e\in dcl(A')$. As $e\in acl(A')$, we get that each $f_i$ is in $\acl(A')$, so each $f_i$ is in $B$.
 
Consider any automorphism $h$ of 
 $\mathcal{M}$ over $A'$. By the last claim there is an automorphism of  $M$ which agrees with $h$ on the first sort and fixes $Bd$; denote its unique extension to $ \mathcal{M}$
by $h'$. Then, as both $h$ and $h'$ fix $A'$, $f_1,\dots,f_n\in A'$, and $h'$ agrees with $h$ on $e_1,\dots,e_m$, we get that $h(e)=h'(e)=e$ (the last equality holds,
as  $h'$ fixes $Bd$, so it must fix $e$). This shows that $e\in dcl(A')$. 
\end{proof}

The following definition is a different version of Definition 3.3 from \cite{PilPol}.

\begin{definition}\label{def:PAC.first.order}
We say that \emph{PAC is a first order property in $T$} if there exists a set $\Sigma$ of $\mathcal{L}$-sentences such that for any $M\models T$ and $P\subseteq M$
$$P\models \Sigma\qquad\iff\qquad P\text{ is PAC}.$$
\end{definition}

\begin{example}
By Proposition 11.3.2 from \cite{FrJa} and the discussion preceding it, we know that PAC is a first order property (in the sense of Definition \ref{def:PAC.first.order}) in ACF$_p$ for $p=0$ and for $p$ being a prime number. Moreover, the axioms provided in Proposition 5.6 in \cite{PilPol} show that, also in DCF$_0$, PAC is a first order property in the above sense (which is different from the condition ``PAC is a first order property" appearing in \cite{PilPol}). 
\end{example}

\section{The case of saturated PAC structures}\label{sec:saturated}
Results of this section assume that our PAC substructures are somewhat saturated as substructures of $\mathfrak{C}$. We will relax the assumptions about saturation in Section \ref{sec:non_saturated_PAC}.
%(which makes sense, since we have quantifier elimination in $\mathfrak{C}$). Note that the results of this section apply to $\kappa$-PAC structures and PAC structures in the sense of Definition 3.1 in \cite{PilPol}.

First, we prove an auxiliary fact, Lemma \ref{lemma2022}, which generalizes Lemma 20.2.2 from \cite{FrJa} (see also Lemma 2.1 in \cite{JardenKiehne}), and then we use it in the proof of Proposition \ref{lemma2023}. Proposition \ref{lemma2023} is one of the two main results of this paper (the other one is Theorem \ref{thm:elementary_invariance}) and generalizes Lemma 20.2.3 in \cite{FrJa}. Since Theorem \ref{thm:elementary_invariance} is called Elementary Equivalence Theorem for Structures, Proposition \ref{lemma2023} could be called ``Elementary Equivalence Theorem for Saturated Structures" - EETSS.

\begin{definition}
For any small substructures $A\subseteq B$ of $\mathfrak{C}$, we define
$$\CG(B/A):=\aut\big(\dcl(B)/\dcl(A\big)),\qquad \CG(A):=\aut\big(\acl(A)/\dcl(A)\big).$$
Note: $\CG(a/A)$ for a tuple $a$ is defined in a different way before Proposition \ref{thm:finGal_fingen}.
\end{definition}

\begin{remark}\label{rem:PAC.to.kPAC}
By Proposition 3.9 from \cite{Hoff3}, if $P$ is PAC and $\kappa$-saturated
(in the sense of the quantifier-free part of the $\mathcal{L}$-theory $\theo(P)$)
for $\kappa\geqslant |T|^+$, then $P$ is $\kappa$-PAC.
\end{remark}

\begin{lemma}[Embedding Lemma]\label{lemma2022}
Assume that
\begin{itemize}
\item $L\subseteq L'$, $M\subseteq M'$, $E\subseteq E'$, $F\subseteq F'$ are small Galois extensions in $\mathfrak{C}$,

\item $L\subseteq E$, $M\subseteq F$, $M'\subseteq F'$,
\item $L'\subseteq E'$ is regular,

\item $F$ is $\kappa$-PAC, where $\kappa\geqslant(|E|+|T|)^+$,
\item $\Phi_0\in\aut(\mathfrak{C})$ is such that $\Phi_0(L)=M$ and $\Phi_0(L')=M'$,
\item $\varphi:\mathcal{G}(F'/F)\to\mathcal{G}(E'/E)$ is a continuous group homomorphism such that
$$\xymatrix{
\CG(F'/F) \ar[r]^-{\varphi} \ar[d]_{\res} & \CG(E'/E) \ar[d]^{\res} \\
\CG(M'/M) \ar[r]_{\varphi_0} & \CG(L'/L)
}$$
where $\varphi_0(\sigma):=\Phi_0^{-1}\circ \sigma\circ\Phi_0$ for each $\sigma\in\CG(F'/F)$, is a commuting diagram.
\end{itemize}
Then there exists $\Phi\in\aut(\mathfrak{C})$ such that
\begin{itemize}
\item $\Phi|_{L'}=\Phi_0|_{L'}$,
\item $\Phi(E)\subseteq F$, $\Phi(E')\subseteq F'$,
\item $\varphi(\sigma)=\Phi^{-1}\circ \sigma\circ \Phi$ for any $\sigma\in\mathcal{G}(F'/F)$ .
\end{itemize}
Moreover, if $\varphi$ is onto and $E'=\acl(E)$, then $\Phi(E)\subseteq F$ is regular.
\end{lemma}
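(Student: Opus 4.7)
The plan is to adapt the classical Fried--Jarden strategy for the Embedding Lemma to the structure setting: I would amalgamate $E'$ with $F'$ over $L' = M'$ (identified via $\Phi_0$), equip the amalgam with a diagonal Galois action arising from $\varphi$, identify its fixed substructure, and use the $\kappa$-PAC property of $F$ to realise that fixed substructure inside $F$.

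After composing with $\Phi_0^{-1}$ I may assume $L = M$, $L' = M'$ and $\Phi_0|_{L'} = \id$, whence $\varphi_0 = \id$. Since $L' \subseteq E'$ is regular, $\tp(E'/L')$ is stationary; I would pick $E^{*'} \subseteq \mathfrak{C}$ realising the nonforking extension of this type to $F'$ via some isomorphism $\iota \colon E' \to E^{*'}$ over $L'$, and set $A := \dcl(E^{*'} \cup F')$. By $E^{*'} \ind_{L'} F'$ the extension $F' \subseteq A$ is regular. For each $\sigma \in \mathcal{G}(F'/F)$, the maps $\sigma|_{F'}$ and $(\iota \circ \varphi(\sigma) \circ \iota^{-1})|_{E^{*'}}$ agree on $L'$ (both restrict to $\res(\sigma) \in \mathcal{G}(L'/L)$ by the commuting diagram), and stationarity extends their union to a unique $\tilde\sigma \in \aut(\mathfrak{C}/F)$ leaving $A$ invariant; I would then check that $\sigma \mapsto \tilde\sigma$ is a continuous group homomorphism.

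Next, let $A_0 := A^{\mathcal{G}(F'/F)}$ be the fixed substructure of this diagonal action. Then $F \subseteq A_0$ (as $\tilde\sigma|_F = \sigma|_F = \id$) and $\iota(E) \subseteq A_0$ (since $\varphi(\sigma) \in \mathcal{G}(E'/E)$ fixes $E$), and the extension $F \subseteq A_0$ is regular: for $a \in \acl(F) \cap A_0$ one has $a \in \acl(F') \cap A = F'$ by regularity of $A/F'$, and $F' \cap A_0 = F'^{\mathcal{G}(F'/F)} = F$ since $F \subseteq F'$ is Galois. As $|A_0| \leq |E| + |T| < \kappa$, Remark~\ref{rem:PAC.to.kPAC} and the $\kappa$-PAC-ness of $F$ give that the stationary type $\tp(A_0/F)$ is realised in $F$, yielding an $F$-embedding $\Psi_0 \colon A_0 \to F$. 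A Galois descent argument (identifying $A$ with $\dcl(A_0 \cup F')$ using $A_0 \ind_F F'$) will extend $\Psi_0$ to an $F'$-embedding $\tilde\Psi \colon A \to F'$ fixing $F'$ pointwise, and extending $\tilde\Psi \circ \iota \colon E' \to F'$ to some $\Phi \in \aut(\mathfrak{C})$ produces the desired map. One then has $\Phi|_{L'} = \id = \Phi_0|_{L'}$, $\Phi(E) \subseteq F$, $\Phi(E') \subseteq F'$, and the intertwining $\Phi \circ \varphi(\sigma) = \sigma \circ \Phi$ on $E'$ is verified separately on $A_0$ (both sides fix $\Psi_0(A_0) \subseteq F$) and on $F'$ (trivially), hence on all of $A$.

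For the ``moreover'' clause, if $\varphi$ is onto then the $\varphi$-fixed part of $E'$ equals $\fix(\mathcal{G}(E'/E)) = E$ by the Galois correspondence, so $\Phi(E) = \Phi(E') \cap F$ inside $F'$; combined with $E' = \acl(E)$ giving $\Phi(E') = \acl(\Phi(E))$, this yields $\acl(\Phi(E)) \cap F = \Phi(E)$, i.e.\ regularity of $\Phi(E) \subseteq F$. The main technical obstacles will be verifying well-definedness and the group-homomorphism property of $\sigma \mapsto \tilde\sigma$, together with the Galois-descent identification $A = \dcl(A_0 \cup F')$ and the lift of $\Psi_0$ to an $F'$-embedding landing in $F'$: in the field case both are handled by classical tensor-product manipulations, but here they must be replaced by careful applications of stationarity of nonforking types.
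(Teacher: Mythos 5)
Your overall strategy—amalgamate $E'$ with $F'$ over $L'$ in an independent position, form the diagonal Galois action, take its fixed substructure $A_0$, and realize its type in $F$ using $\kappa$-PAC—is exactly the route the paper takes (the paper calls your $A_0$ the set $D$). Your regularity argument for $F \subseteq A_0$ is correct and is in fact a cleaner derivation than the paper's direct computation. However, there is a genuine gap in the cardinality bookkeeping that invalidates the step where you invoke $\kappa$-PAC.

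You assert that $|A_0| \leq |E| + |T| < \kappa$. This is false: you have just shown $F \subseteq A_0$, and no cardinality bound is placed on $F$ in the hypotheses (so $|F|$, and hence $|A_0|$, may well exceed $\kappa$). Moreover, even if one replaces $A_0$ by a small generating tuple over $F$, one still cannot directly realize $\tp(\cdot/F)$ in $F$, because Definition~\ref{def:kPAC} only guarantees realization of stationary types over subsets of $F$ of size strictly less than $\kappa$, and $F$ itself may be too large to serve as the base. The paper avoids both issues simultaneously: it does not realize the type of all of $D$, but rather enumerates $E'$ (which has size $<\kappa$ since $E' \subseteq \acl(E)$), picks for each $m_i \in E'$ a formula $\psi_i$ and parameter tuples $\bar f_i \in F'$, $\bar d_i \in D$ of minimal length with $\psi_i(\mathfrak{C};\bar f_i,\bar d_i)=\{m_i\}$, and realizes only the tuple $\bar d = (\bar d_i)_{i}$, which has length $<\kappa$. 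It then reduces the base: using $\tp(\bar d/F) \vdash \tp(\bar d/F')$ (Corollary 3.35 of \cite{Hoff3}) and local character, it finds $F_0 \subseteq F$ of size $<\kappa$ with $\qftp(\bar d/F_0) \vdash \qftp(\bar d/G\bar f)$, passes to $F_1 := \acl(F_0)\cap F$, checks that $F_1 \subseteq \dcl(F_1,\bar d)$ is regular (hence $\tp(\bar d/F_1)$ stationary), and only then applies $\kappa$-PAC. Without this extraction-of-a-small-generating-tuple and base-reduction argument, your invocation of $\kappa$-PAC is not licensed. Your ``Galois descent'' step to extend $\Psi_0$ to an $F'$-embedding of $A$ is also only sketched, but it is morally what the paper does via quantifier elimination once $\bar d'$ is found, so that part is recoverable; the cardinality gap is the essential missing ingredient.
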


\begin{proof}
Without loss of generality, we assume that $L=M$, $L'=M'$, $\Phi_0=\id_{\mathfrak{C}}$ and $F'\ind_{L'} E'$. Our diagram looks as follows
$$\xymatrix{
\CG(F'/F) \ar[rr]^{\varphi} \ar[dr]_{\res} & & \CG(E'/E) \ar[dl]^{\res}\\
& \CG(L'/L)
}$$
We will finish the proof if we show the existence of $\Phi\in\aut(\mathfrak{C}/L')$ such that $\Phi(E)\subseteq F$, $\Phi(E')\subseteq F'$ and $\varphi(\sigma)=\Phi^{-1}\circ \sigma \circ\Phi$ for each $\sigma\in\CG(F'/F)$ (and the ``moreover part": if $\varphi$ is onto and $E'=\acl(E)$, then $\Phi(E)\subseteq F$ is regular).
\
\\
\\
\textbf{Part A}
\\ Consider the following extension of the previous diagram:
$$\xymatrix{
& \CG(F'E'/FE) \ar[dr]^{\res} & \\
\CG(F'/F) \ar[dr]_{\res} \ar[ur]^{i} \ar[rr]^{\varphi}& & \CG(E'/E)  \ar[dl]^{\res} \\
 &\CG(L'/L) & }$$
where $i$ is a continuous monomorphism of profinite groups, given as follows.
We have that $F'\ind_{L'}E'$ and $L'\subseteq E'$ is regular.
By Corollary 3.39 from \cite{Hoff3}, each pair
 $(\sigma,\varphi(\sigma))$, where $\sigma\in\CG(F'/F)$, extends to an automorphism $\tilde{\sigma}\in\aut(\mathfrak{C})$ such that $\tilde{\sigma}|_{F'}=\sigma$ and $\tilde{\sigma}|_{E'}=\varphi(\sigma)$. We define $i(\sigma)$ as the restriction of $\tilde{\sigma}$ to $\dcl(F',E')$.
We omit here checking that such an $i$ is a well-defined continuous monomorphism, which is straightforward.
Note that $i$ is a section of the continuous homomorphism
$$\res: \CG(F'E'/FE)\to \CG(F'/F).$$

We define $D\subseteq\dcl\big(F',E'\big)$ as the invariants of the group action of $i\big(\CG(F'/F) \big)$, i.e.
$$D:=\dcl\big(F',E'\big)^{i(\CG(F'/F) )}.$$
Since $i\big(\CG(F'/F) \big)$, as a continuous image of a profinite space, is a closed subgroup, the Galois correspondence (e.g. Fact 3.21 in \cite{Hoff3}) implies that
$$\CG(F'E'/D)=i\big(\CG(F'/F) \big)\xrightarrow {\quad\cong\quad\res\quad}\CG(F'/F).$$
\
\\
\textbf{Claim}
$F\subseteq D$ is regular.
\
\\ Proof of the claim: 
Suppose not, and take $m\in \big(D\cap\acl(F)\big)\setminus F$. Since $m\in\acl(F)\setminus F$, there exists $\sigma\in\CG(F)$ such that $\sigma(m)\neq m$.
Note that $\acl(F)\ind_{L'}E'$, and, by Corollary 3.39 from \cite{Hoff3}, the pair $(\sigma,\varphi(\sigma|_{F'}))$ extends to an element $\tilde\sigma\in\aut(\mathfrak{C})$ such that $\tilde\sigma|_{\acl(F)}=\sigma$ and $\tilde\sigma|_{E'}=\varphi(\sigma|_{F'})$. Note that $\tilde\sigma|_{\dcl(F'E')}=i(\sigma|_{F'})$.

Because $m\in D\subseteq\dcl(F',E')$, there are elements $f'\in F'$ and $e'\in E'$ and a formula $\psi$ such that $\{m\}=\psi(f',e',\mathfrak{C})$. After applying $\tilde\sigma$, we get
$$\{\sigma(m)\}=\{\tilde\sigma(m)\}=\psi\big(\tilde\sigma(f'),\tilde\sigma(e'),\mathfrak{C}\big)=
\psi\big(i(\sigma|_{F'})(f'),i(\sigma|_{F'})(e'),\mathfrak{C}\big).$$
On the other hand, since $m\in D$,
$$\psi\big(i(\sigma|_{F'})(f'),i(\sigma|_{F'})(e'),\mathfrak{C}\big)=\{i(\sigma|_{F'})(m)\}=\{m\},$$
hence $\sigma(m)=m$, a contradiction.
Here ends the proof of the claim.
\
\\
\\
\textbf{Part B}
\\ Note that
$$\CG(F'E'/DF')\subseteq\ker\Big(\res:\CG(F'E'/D)\to\CG(F'/F)\Big),$$
but this restriction is an isomorphism, hence
$$\CG(F'E'/DF')=\{ 1\}$$
and, by the Galois correspondence, it follows that
$$\dcl\big(F',E'\big)=\dcl\big(D,F'\big),$$
$$E'\subseteq\dcl\big(D,F'\big).$$

Enumerate the elements of $E'$ by $(m_i)_{i\in I}$, where $|I|<\kappa$.
For each $i\in I$ such that $m_i\in E\subseteq D$, we put $d_i=m_i\in D$ and $\psi_i(x,z)$ given as ``$x=z$", so 
$\psi_i(\mathfrak{C},d_i)=\{m_i\}$.

For each $i\in I$ such that $m_i\in E'\setminus E$, take $\d_i\in D$, $\f_i\in F'$, 
and a quantifier-free formula $\psi_i(x;\y_i,\z_i)$ such that
\begin{itemize}
	\item $\psi_i(\mathfrak{C};\f_i,\d_i)=\{m_i\}$ and
	\item $\d_i$ has the smallest length possible.
\end{itemize}
Note that $\d_i=\emptyset$ if $m_i\in F'$. For $\d=(\d_i)_{i\in I}$, we have
$$tp(\d/F)\vdash \tp(\d/F')$$
which follows from Corollary 3.35 in \cite{Hoff3}.
Let 
$$G\f:=\{\sigma(\f_i)|\ \sigma\in \CG(F'/F), i\in I\}\,\cup\,L'\subseteq F',$$
which is of the size smaller than $\kappa$. Thus $\qftp(\d/F_0)\vdash \qftp(\d/G\f)$ for some $F_0\subseteq F$ of size smaller that 
$\kappa$.
Let $F_1:=\acl(F_0)\cap F$ (which is also of the size smaller than $\kappa$), then $F_1\subseteq F$ is regular and, since $F\subseteq D$ is regular, we have that also $F_1\subseteq D$ is regular. Therefore $F_1\subseteq \dcl(F_1,\d)$ is regular and (by Lemma 3.35 in \cite{Hoff3}) $\tp(\d/F_1)$ is stationary (in the sense of $T$) and so realized in $F$ (since $F$ is $\kappa$-PAC).
%Note that $$\qftp(\d/F)\vdash \tp(\d/F)\vdash \tp(\d/G\f).$$
%Since $D$ is a regular extension of $F$, we have $\qftp(\d/F_0)\in\ST(F,\kappa,\kappa)$.
%By $\kappa$-regular saturation of $F$, we get that $\qftp(\d/F_0)$, and 
It follows that
$\qftp(\d/G\f)$, is realized by some $(\d'_i)_{i\in I}\subseteq F$. 
\
\\
\\
\textbf{Part C}
\\ By quantifier elimination in $T$, there exists $\Phi\in\aut(\mathfrak{C}/G\f)$
such that $\Phi(\d)=\d'$.
Note that $\Phi\in\aut(\mathfrak{C}/L')$. If $m_i\in E'$, then it follows that
$$\{\Phi(m_i)\}=\psi_i(\mathfrak{C},\f_i,\Phi(\d_i))=\psi_i(\mathfrak{C},\f_i,\d_i')
\subseteq F',$$
hence $\Phi(E')\subseteq F'$.
If $m_i\in E$ then the above line simplifies to:
$$\{\Phi(m_i)\}=\psi_i(\mathfrak{C},\Phi(\d_i))=\psi_i(\mathfrak{C},\d_i')
\subseteq F,$$
hence $\Phi(E)\subseteq F$. 

It is left to check that for any $m_i\in E'$ and any $\sigma\in\CG(F'/F)$ we have that
$$\Phi\big(\varphi(\sigma)\big)(m_i)=\sigma\big(\Phi(m_i)\big).$$
We start with
$$\psi_i(\mathfrak{C},\f_i,\d_i)=\{m_i\},$$
which gives, after applying $\Phi$,
$$\psi_i(\mathfrak{C},\f_i,\d_i')=\{\Phi(m_i)\}.$$
Now, we use $\sigma$ to get
$$\psi_i\big(\mathfrak{C},\sigma(\f_i),\d_i'\big)=\{\sigma\big(\Phi(m_i)\big)\}.$$
On the other hand, if we apply $\tilde{\sigma}$ (an extension of the pair $(\sigma,\varphi(\sigma))$) to $\psi_i(\mathfrak{C},\f_i,\d_i)=\{m_i\}$, we obtain
$$\psi_i\big(\mathfrak{C},\sigma(\f_i),\d_i\big)=\{\varphi(\sigma)(m_i)\}$$
(since $\d_i\subseteq D$). To finish the proof, we observe that the last line transforms, after applying $\Phi$, into
$$\psi_i\big(\mathfrak{C},\sigma(\f_i),\d_i'\big)=\{\Phi\big(\varphi(\sigma)(m_i)\big)\},$$
so $\{\Phi\big(\varphi(\sigma)(m_i)\big)\}=\{\sigma\big(\Phi(m_i)\big)\}$.

Now, we will prove the ``moreover part". Assume that $\varphi$ is onto and $E'=\acl(E)$. Suppose that there is $m\in\acl(E)$ such that $\Phi(m)\in F\cap\acl\big(\Phi(E)\big)$. We have that
$\sigma\big(\Phi(m)\big)=\Phi(m)$ for every $\sigma\in\CG(F'/F)$. Thus $\varphi(\sigma)(m)=(\Phi^{-1}\circ\sigma\circ\Phi)(m)=m$. Because $\varphi$ is onto, we obtain that $\tau(m)=m$ for every $\tau\in\CG(E)$, so $m\in E$.
\end{proof}

\begin{cor}\label{cor:embedding_lemma0}
If we set $F'=F$, $L'=L\subseteq F$ and $E'=E$ such that $L\subseteq E$ is regular, then we can embed $E$ into $F$ provided $F$ is $\kappa$-PAC.
\end{cor}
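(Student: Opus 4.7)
The plan is to deduce this directly from Lemma \ref{lemma2022} by specializing to the degenerate case where all Galois extensions collapse to equalities and the homomorphism $\varphi$ is trivial. Concretely, I would apply the Embedding Lemma with the choices $L = L'$, $E = E'$, $F = F'$ and $M = M' := L$, taking $\Phi_0 := \id_{\mathfrak{C}}$ and $\varphi$ the trivial homomorphism between the trivial groups $\CG(F'/F) = \CG(E'/E) = \{1\}$.

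With this setup, essentially all the hypotheses of Lemma \ref{lemma2022} become automatic: every ``Galois extension'' in the list is an equality, hence trivially Galois; the inclusions $L \subseteq E$, $M = L \subseteq F$, $M' = L \subseteq F' = F$ hold by assumption; the regularity hypothesis $L' \subseteq E'$ is exactly the regularity of $L \subseteq E$ that is given; the $\kappa$-PAC condition on $F$ (with $\kappa \geq (|E|+|T|)^+$) is part of our assumption; and the square
$$\xymatrix{
\CG(F'/F) \ar[r]^-{\varphi} \ar[d]_{\res} & \CG(E'/E) \ar[d]^{\res} \\
\CG(M'/M) \ar[r]_{\varphi_0} & \CG(L'/L)
}$$
commutes trivially since all four groups are trivial and $\varphi_0 = \id \circ \id \circ \id$.

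The conclusion of Lemma \ref{lemma2022} then produces an automorphism $\Phi \in \aut(\mathfrak{C})$ with $\Phi|_{L'} = \Phi_0|_{L'} = \id_L$ and $\Phi(E) = \Phi(E') \subseteq F' = F$. The condition on conjugation by $\Phi$ is vacuous here. The restriction $\Phi|_E : E \to F$ is therefore an $\mathcal{L}$-embedding of $E$ into $F$ that fixes $L$ pointwise, which is exactly the embedding claimed.

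There is no real obstacle: the corollary is a bookkeeping specialization, and the only thing to check is that the hypotheses of the Embedding Lemma are indeed all satisfied after the collapse, which they are by inspection.
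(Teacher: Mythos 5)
Your proof is correct and follows exactly the intended route: since the corollary is stated as a specialization of Lemma~\ref{lemma2022} (the paper gives no separate proof), the only work is to choose the remaining data ($M=M'=L$, $\Phi_0=\id_{\mathfrak{C}}$, $\varphi$ trivial), verify the hypotheses degenerate correctly, and read off the embedding $\Phi|_E$ from the conclusion, which you do cleanly.
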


\begin{cor}\label{cor:embedding_lemma}
Assume that $L\subseteq F\subseteq\mathfrak{C}$, $F$ is an $(|T|+|L|)^{+}$-PAC substructure and $E$ are small definably closed substructures of $\mathfrak{C}$,
and there exists a continuous homomorphism $\varphi:\mathcal{G}(F)\to\mathcal{G}(E)$ such that
$$\xymatrix{\mathcal{G}(F) \ar[dr]_{\res} \ar[rr]^{\varphi} & & \mathcal{G}(E)\ar[dl]^{\res} \\
& \mathcal{G}(L) &
}$$
is a commuting diagram. Then there exists $\Phi\in\aut(\mathfrak{C}/\acl(L))$ such that $\varphi(\sigma)=\Phi^{-1}\circ\sigma\circ\Phi$ and $\Phi(E)\subseteq F$.
\end{cor}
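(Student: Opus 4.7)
The plan is to apply the Embedding Lemma (Lemma \ref{lemma2022}) directly, with a specialization tailored to the present setting. I would take the $L$ and $M$ of the Embedding Lemma both to be the $L$ of the corollary, $L' := M' := \acl(L)$, $F' := \acl(F)$, $E' := \acl(E)$, and $\Phi_0 := \id_{\mathfrak{C}}$. Because $L$, $E$, $F$ are all definably closed, the identifications $\CG(F) = \CG(F'/F)$, $\CG(E) = \CG(E'/E)$, and $\CG(L) = \CG(L'/L)$ are literal, so the given $\varphi$ is already a continuous homomorphism of exactly the shape demanded by the lemma.

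Next I would verify the technical hypotheses of Lemma \ref{lemma2022}. The extensions $L \subseteq L'$, $F \subseteq F'$, $E \subseteq E'$ are Galois by the definition of the algebraic closure; the inclusions $L \subseteq F$ and $L \subseteq E$ (the latter being implicit in the existence of the restriction map $\CG(E) \to \CG(L)$) yield $M' \subseteq F'$ and $L \subseteq E$ as required; and the regularity of $L' \subseteq E'$ is immediate, since both are already algebraically closed. The PAC hypothesis matches the lemma's requirement, reading $(|T|+|L|)^{+}$-PAC as large enough to dominate $(|E'|+|T|)^{+}$, with $|E'| = |\acl(E)| \leq |E| + |T|$. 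Finally, the commutativity of the diagram
$$\xymatrix{
\CG(F'/F) \ar[r]^-{\varphi} \ar[d]_{\res} & \CG(E'/E) \ar[d]^{\res} \\
\CG(L'/L) \ar[r]_-{\id} & \CG(L'/L)
}$$
follows from the commuting triangle in the hypothesis, since $\Phi_0 = \id_{\mathfrak{C}}$ forces the $\varphi_0$ of the lemma to be $\id$ as well.

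The Embedding Lemma then produces a $\Phi \in \aut(\mathfrak{C})$ satisfying $\Phi|_{L'} = \Phi_0|_{L'} = \id_{\acl(L)}$, hence $\Phi \in \aut(\mathfrak{C}/\acl(L))$; $\Phi(E) \subseteq F$; and $\varphi(\sigma) = \Phi^{-1} \circ \sigma \circ \Phi$ for every $\sigma \in \CG(F'/F) = \CG(F)$. These are precisely the three conclusions claimed by the corollary, so no further argument is needed. The work is essentially bookkeeping: renaming the variables and checking the hypotheses of Lemma \ref{lemma2022} in this special case. I do not expect any real obstacle, since all the substantive content, particularly the use of $\kappa$-PAC to realize the stationary type of $\bar{d}$ over $F$, has already been carried out in the proof of that lemma.
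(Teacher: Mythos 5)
Your proposal correctly identifies the intended route: Corollary \ref{cor:embedding_lemma} is a direct specialization of Lemma \ref{lemma2022}, and the substitutions you make ($L$ and $M$ both the corollary's $L$, $L'=M'=\acl(L)$, $F'=\acl(F)$, $E'=\acl(E)$, $\Phi_0=\id$) are exactly the right ones. The verifications that the extensions are Galois, that $\acl(L)\subseteq\acl(E)$ is regular because $\acl(L)$ is algebraically and definably closed, and that the commuting square of the lemma reduces to the corollary's commuting triangle once $\varphi_0=\id$, are all sound, and the three conclusions of the lemma literally become the three conclusions of the corollary.

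One step does not actually close: the cardinality check. Lemma \ref{lemma2022} requires $F$ to be $\kappa$-PAC with $\kappa\geqslant(|E|+|T|)^+$, where $E$ is the corollary's $E$. The corollary supplies $\kappa=(|T|+|L|)^+$, and nothing in the hypotheses forces $|E|\leqslant|T|+|L|$ (the only constraint on $E$ coming from the diagram is $L\subseteq E$). Your parenthetical ``with $|E'|=|\acl(E)|\leqslant|E|+|T|$'' estimates the wrong quantity (the lemma's bound is phrased in terms of $|E|$, not $|E'|$) and in any case does not show $(|T|+|L|)^+\geqslant(|E|+|T|)^+$. This is very likely an imprecision in the corollary as stated in the paper rather than a flaw in your strategy: either the paper intends an additional assumption $|E|\leqslant|T|+|L|$ (as in the analogous size condition of Corollary \ref{cor2033}), or the exponent should read $(|T|+|E|)^+$. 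You should make this dependence explicit rather than assert that the hypotheses ``match.''

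One further small point: the lemma literally speaks of $\CG(F'/F)=\aut(\dcl(F')/\dcl(F))$, and you silently use $\dcl(F)=F$ and $\dcl(\acl(F))=\acl(F)$ to identify this with $\CG(F)$. That identification is valid precisely because the corollary assumes $F$ (and $E$, $L$) definably closed, and $\acl$ in $T=(T_0^{\eq})^m$ is both algebraically and definably closed; it is worth a sentence to say so, since it is what makes $\varphi$ literally ``of the shape demanded by the lemma.''
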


%\begin{remark}\label{rem:kappaPAC} Note that in the above proof, we used saturation only for qf-types over $F$ which are stationary (i.e. $\qftp(\bar{d}/F)$). The same remains true for the proof  of Proposition \ref{lemma2023}. Compare to Definition 3.1 in \cite{PilPol} ($\kappa$-PAC structure and PAC structure). \end{remark}

\begin{prop}\label{lemma2023}
Assume that
\begin{itemize}
\item $K$, $L$, $M$, $E$, $F$ are small definably closed substructures of $\mathfrak{C}$,
\item $K\subseteq L\subseteq E$, $K\subseteq M\subseteq F$,
\item $F$ and $E$ are $\kappa$-PAC, where $\kappa\geqslant(|L|+|M|+|T|)^+$,
\item $\Phi_0\in\aut(\mathfrak{C}/K)$ is such that $\Phi_0(L)=M$,
\item $\varphi:\mathcal{G}(F)\to\mathcal{G}(E)$ is a continuous group isomorphism such that
$$\xymatrix{\mathcal{G}(E) \ar[d]_{\res} & \mathcal{G}(F)\ar@{->>}[l]_{\varphi} \ar[d]^{\res}\\
\mathcal{G}(L) & \mathcal{G}(M)\ar@{->>}[l]^{\varphi_0}}$$
where $\varphi_0(f):=\Phi_0^{-1}\circ f\circ\Phi_0$, is a commuting diagram.
\end{itemize}
Then $E\equiv_K F$.
\end{prop}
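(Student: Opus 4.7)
The plan is to build a back-and-forth system of partial $\mathcal{L}_K$-isomorphisms between $E$ and $F$ using iterated applications of Lemma \ref{lemma2022} (the Embedding Lemma), and then to conclude $E \equiv_K F$ by the classical back-and-forth criterion.

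The base case is supplied by $\Phi_0|_L : L \to M$, a partial $\mathcal{L}_K$-isomorphism fixing $K$, together with the commuting Galois square assumed in the statement. For the inductive step, suppose a partial $\mathcal{L}_K$-isomorphism $\Phi_n : L_n \to M_n$ has been constructed, with $L \subseteq L_n \subseteq E$, $M \subseteq M_n \subseteq F$, $|L_n| < \kappa$, $\Phi_n$ extending $\Phi_0|_L$, and conjugation by (a lift to $\aut(\mathfrak{C})$ of) $\Phi_n$ intertwining $\varphi$ with the corresponding restriction maps. Given a new element $e \in E$, set $L_{n+1} := \dcl(L_n, e)$ and $L_{n+1}' := \acl(L_n, e)$, and apply Lemma \ref{lemma2022} with its $(L, L', M, M', E, E', F, F')$ replaced by $(L_n, \acl(L_n), M_n, \acl(M_n), L_{n+1}, L_{n+1}', F, \acl(F))$, feeding in the composition of $\varphi$ with the restriction $\mathcal{G}(E) \twoheadrightarrow \mathcal{G}(L_{n+1}'/L_{n+1})$ as the Galois-group morphism. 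The regularity of $\acl(L_n) \subseteq L_{n+1}'$ is trivial (both are algebraically closed), the size condition $(|L_{n+1}'| + |T|)^+ \leq \kappa$ holds since $|L_n|, |T| < \kappa$ and $e$ is a single element, and the commuting-diagram hypothesis reduces, by functoriality of restriction, to the commuting square already assumed. Lemma \ref{lemma2022} then yields $\Phi_{n+1} \in \aut(\mathfrak{C})$ extending $\Phi_n$ on $L_n$, sending $L_{n+1}$ into $F$, and still inducing the correct Galois-group conjugation. The symmetric step, extending to cover a given $f \in F$, is handled by swapping the roles of $E$ and $F$ and using $\Phi_0^{-1}, \varphi^{-1}$; it is here that the $\kappa$-PAC property of $E$ is used.

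Iterating these steps produces a back-and-forth system of partial $\mathcal{L}_K$-isomorphisms exhibiting, for every finite tuple from $E$, a matching tuple from $F$ with the same quantifier-free $\mathcal{L}_K$-type in $\mathfrak{C}$, and vice versa. Combined with quantifier elimination in $T$ and the saturation implicit in the $\kappa$-PAC hypothesis (cf.\ Remark \ref{rem:PAC.to.kPAC}), the classical back-and-forth criterion yields $E \equiv_K F$.

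The main obstacle is the inductive preservation of the commuting Galois square at each stage: one must verify that the new $\Phi_{n+1}$ continues to intertwine the correct restrictions of $\varphi$, so that Lemma \ref{lemma2022} remains applicable at the next step. A secondary subtlety is the choice of $L_{n+1}' = \acl(L_n, e)$, rather than a smaller Galois extension, which is what makes the restriction $\mathcal{G}(E) \to \mathcal{G}(L_{n+1}'/L_{n+1})$ a well-defined surjective homomorphism and makes the hypotheses of the Embedding Lemma line up correctly.
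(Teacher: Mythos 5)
Your approach is correct in outline, but it takes a genuinely different route from the paper's proof. The paper constructs, by recursion, chains of \emph{elementary} substructures $L = L_0 \subseteq L_1 \subseteq \cdots \subseteq E$ and $M = M_0 \subseteq M_1 \subseteq \cdots \subseteq F$ (invoking Skolem--L\"owenheim at each stage to ensure $L_i \preceq E$, $M_i \preceq F$ with $|L_i|, |M_i| < \kappa$), together with pairs of automorphisms $\Phi_i, \Psi_i \in \aut(\mathfrak{C}/K)$ that are mutually inverse on the relevant substructures. The conclusion is then reached concretely: $L_\infty := \bigcup_i L_i \preceq E$, $M_\infty := \bigcup_i M_i \preceq F$, and $\Phi_\infty := \bigcup_i \Phi_i|_{L_i}$ is an $\mathcal{L}_K$-isomorphism $L_\infty \to M_\infty$, so $E \succeq L_\infty \cong_K M_\infty \preceq F$. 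Your proof instead builds a Karp-style back-and-forth family of partial isomorphisms between $E$ and $F$, extending one element at a time, and concludes by the classical Fra\"iss\'e--Karp criterion. Both approaches iterate the Embedding Lemma \ref{lemma2022}, and the propagation of the conjugation condition at each stage — the step you correctly identify as the main obstacle — is handled identically in both (the new conjugation relation is exactly what Lemma \ref{lemma2022} provides). What the paper's approach buys is an explicit pair of isomorphic elementary substructures of $E$ and $F$ (which is useful elsewhere in the subject); what yours buys is that the sizes stay below $\kappa$ automatically, the reliance on Skolem--L\"owenheim disappears, and the conclusion $E \equiv_{\infty\omega} F$ is formally stronger than $E \equiv F$.

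One inaccuracy to flag: you claim the restriction $\mathcal{G}(E) \to \mathcal{G}(L_{n+1}'/L_{n+1})$ is \emph{surjective} and present this as the reason for choosing $L_{n+1}' = \acl(L_n, e)$. That restriction is surjective only when $L_{n+1} \subseteq E$ is a regular extension, which is not automatic. Fortunately, Lemma \ref{lemma2022} only requires a continuous group homomorphism (surjectivity enters only in the ``moreover'' clause, which you do not use), and the actual reason for choosing $L_{n+1}' = \acl(L_n, e)$ is to guarantee $\acl(L_n) \subseteq L_{n+1}'$ (so that the inclusion and regularity hypotheses $L' \subseteq E'$ regular of Lemma \ref{lemma2022} are satisfied). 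Similarly, the reference to ``saturation implicit in the $\kappa$-PAC hypothesis'' is a red herring: the Karp criterion needs no saturation, only that the partial maps preserve atomic formulas, which is automatic since they are restrictions of automorphisms of $\mathfrak{C}$. Neither slip affects the validity of the argument.
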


\begin{proof}
We will recursively construct:
\begin{itemize}
\item a tower of substructures of $E$, $L=:L_0\subseteq L_1\subseteq L_2\subseteq\ldots\subseteq E$, such that for each $i>0$ we have $L_i\preceq E$ and $|L_i|<\kappa$,

\item a tower of substructures of $F$, $M=:M_0\subseteq M_1\subseteq M_2\subseteq\ldots\subseteq F$, such that for each $i>0$ we have $M_i\preceq F$ and $|M_i|<\kappa$,

\item two sequences of automorphisms $\Phi_i,\Psi_i\in\aut(\mathfrak{C}/K)$, where $i\geqslant 0$, such that for each $i\geqslant 0$ we have
\begin{IEEEeqnarray*}{rClCrCl}
\Phi_{i}(L_{i})&\subseteq & M_{i+1},&\qquad &\Psi_i(M_i) &\subseteq & L_i, \\
\Phi_{i+1}|_{L_{i}} &=& \Phi_{i}|_{L_{i}},&\qquad & \Psi_{i+1}|_{M_i} &=& \Psi_i|_{M_i},\\
\Psi_{i+1}\Phi_{i}|_{L_{i}} &=&\id_{L_{i}}, &\qquad & \Phi_{i}\Psi_i|_{M_{i}} &=& \id_{M_{i}}.
\end{IEEEeqnarray*}
and the following diagrams, where $\varphi_i$ and $\psi_i$ are induced by $\Phi_i$ and $\Psi_i$ respectively, commute
$$\xymatrix{\mathcal{G}(E) \ar[d]_{\res} & \mathcal{G}(F)\ar[l]_-{\varphi} \ar[d]^{\res} && & \mathcal{G}(E)\ar[r]^-{\varphi^{-1}} \ar[d]_{\res} & \mathcal{G}(F) \ar[d]^{\res}\\
\mathcal{G}\big(\Phi_i(M_i)\big) & \mathcal{G}(M_i)\ar[l]^-{\psi_i^{-1}} && & \mathcal{G}(L_i)\ar[r]_{\varphi_i^{-1}} & \mathcal{G}\big(\Phi_i(L_i) \big)}$$
\end{itemize}
\
\\
\textbf{Step $0$}
\\ Structures $L_0=L$, $M_0=M$ and an automorphism $\Phi_0$ are given. We set $\Psi_0:=\Phi^{-1}$ and easily check that conditions required in our recursive construction are satisfied by $(L_0,M_0,\Phi_0,\Psi_0)$.
\
\\
\\
\textbf{Step $i\mapsto i+1$}
\\ Assume that we have already obtained $(L_0,M_0,\Phi_0,\Psi_0),\ldots,(L_i,M_i,\Phi_i,\Psi_i)$ which satisfy the aforementioned conditions. Our goal is to define $(L_{i+1},M_{i+1},\Phi_{i+1},\Psi_{i+1})$.

By the recursive assumption, we have $M_i=\Phi_i\Psi_i(M_i)\subseteq\Phi_i(L_i)$.
By Skolem-L\"owenheim theorem, we choose $M_{i+1}\preceq F$ which contains $M_i\subseteq\Phi_i(L_i)$ and is of size smaller than $\kappa$. ($\;\Phi_i(L_i)\subseteq M_{i+1}\;\;\checkmark \;$)

Since the following diagram commutes
$$\xymatrixcolsep{4pc}\xymatrix{\mathcal{G}(E)\ar@{->>}[r]^-{\res\circ\varphi^{-1}} \ar[d]_{\res} & \mathcal{G}(M_{i+1}) \ar[d]^{\res}\\
\mathcal{G}(L_i) \ar@{->>}[r]_-{\varphi_i^{-1}} & \mathcal{G}\big(\Phi_i(L_i)\big)}$$
Lemma \ref{lemma2022} assures the existence of $\Psi_{i+1}\in\aut(\mathfrak{C})$ such that
$$\Psi_{i+1}(M_{i+1})\subseteq E,\qquad\Psi_{i+1}|_{\acl\big(\Phi_i(L_i)\big)}=\Phi_i^{-1}|_{\acl\big(\Phi_i(L_i)\big)},$$
$$\text{and }\;(\res\circ\varphi^{-1})(f)=\Psi_{i+1}^{-1}\circ f\circ\Psi_{i+1}=:\psi_{i+1}(f),$$
where $f\in\mathcal{G}(E)$. We see that $\Psi_{i+1}\Phi_i|_{L_i}=\id_{L_i}.\;\;\checkmark$

Because $\Psi(M_i)\subseteq L_i$ and $\Phi_i\Psi_i|_{M_i}=\id_{M_i}$ (recursive assumption), it follows that $M_i=\Phi_i\Psi_i(M_i)\subseteq\Phi_i(L_i)$ and
$\Psi_i|_{M_i}=\Phi_i^{-1}|_{M_i}$, so
$$\Psi_{i+1}|_{M_i}=\Phi_i^{-1}|_{M_i}=\Psi_i|_{M_i}.\qquad\checkmark$$

Note that $L_i=\Psi_{i+1}\Phi_i(L_i)\subseteq\Psi_{i+1}(M_{i+1})$. Now, we use Skolem-L\"owenheim theorem to get $L_{i+1}\preceq E$ of size smaller than $\kappa$ such that $L_i\subseteq\Psi_{i+1}(M_{i+1})\subseteq L_{i+1}$. ($\;\Psi_{i+1}(M_{i+1})\subseteq L_{i+1}\;\;\checkmark \;$)

Before we define $\Phi_{i+1}$, we need to consider a commuting diagram, which summarizes the situation:
$$\xymatrix{ & & \mathcal{G}(F) \ar[dr]^{\res} \ar@/_2pc/[dl]_{\varphi} & \\
\mathcal{G}(L_{i+1})\ar[ddr]_{\res} & \mathcal{G}(E) \ar[d]_{\res} \ar[l]_{\res}\ar[ur]^{\varphi^{-1}} \ar[rr]^{\res\circ\varphi^{-1}=\psi_{i+1}}& & \mathcal{G}(M_{i+1}) \ar[dd]^{\res} \ar[dll]^{\varphi_{i+1}^{-1}} \\
& \mathcal{G}\big(\Phi_{i+1}(M_{i+1} \big) \ar[d]^{\res}& &  \\
& \mathcal{G}(L_i) \ar[rr]_{\varphi_i^{-1}} & & \mathcal{G}\big(\Phi_i(L_i) \big)
}$$
Therefore we obtain that
$$\xymatrix{\mathcal{G}(E) \ar[d]_{\res} & \mathcal{G}(F) \ar[l]_-{\varphi} \ar[d]^{\res} \\
\mathcal{G}\big(\Psi_{i+1}(M_{i+1})\big) & \mathcal{G}(M_{i+1}) \ar[l]^-{\psi_{i+1}^{-1}}
}$$
is commuting ($\;\checkmark\;$) and also that
$$\xymatrix{ \mathcal{G}(L_{i+1}) \ar[d]_{\res} & \mathcal{G}(F) \ar[l]_{\res\circ\varphi} \ar[d]^{\res} \\
\mathcal{G}\big(\Phi_{i+1}(M_{i+1})\big) & \mathcal{G}(M_{i+1}) \ar[l]^{\psi_{i+1}^{-1}}
}$$
is commuting, which allows us to use Lemma \ref{lemma2022}. There exists $\Phi_{i+1}\in\aut(\mathfrak{C})$ such that
$$\Phi_{i+1}(L_{i+1})\subseteq F,\qquad \Phi_{i+1}|_{\acl\big(\Psi_{i+1}(M_{i+1}) \big)}=\Psi_{i+1}^{-1}|_{\acl\big(\Psi_{i+1}(M_{i+1}) \big)},$$
$$\text{and }\;(\res\circ \varphi)(f)=\Phi_{i+1}^{-1}\circ f\circ \Phi_{i+1}=:\varphi_{i+1}(f)$$
for $f\in\mathcal{G}(F)$. Immediately, we obtain that
$$\Phi_{i+1}\Psi_{i+1}|_{M_{i+1}}=\id_{M_{i+1}}.\qquad\checkmark$$
Since $L_i\subseteq\Psi_{i+1}(M_{i+1})$ and $\Psi_{i+1}\Phi_i|_{L_i}=\id_{L_i}$, it follows that
$$\Phi_{i+1}|_{L_i}=\Psi_{i+1}^{-1}|_{L_i}=\Phi_i|_{L_i}.\qquad\checkmark$$
Also
$$\xymatrix{\mathcal{G}(E) \ar[d]_{\res} \ar[r]^-{\varphi^{-1}} & \mathcal{G}(F) \ar[d]^{\res}\\
\mathcal{G}(L_{i+1}) \ar[r]_-{\varphi_{i+1}^{-1}} &\mathcal{G}\big(\Phi_i(L_i) \big)
}$$
commutes ($\;\checkmark\;$). Therefore the recursion step is completed.

In particular, for each $i\geqslant 0$ we obtain
$$\Phi_{i+1}|_{L_i}=\Phi_i|_{L_i},\quad \Phi_i(L_i)\subseteq M_{i+1},\quad
M_i\subseteq\Phi_i(L_i).$$
Therefore $\Phi_{\infty}:L_{\infty}\to M_{\infty}$, where $L_{\infty}:=\bigcup L_i$, 
$M_{\infty}:=\bigcup M_i$ and $\Phi_{\infty}:=\bigcup\Phi_i|_{L_i}$, is an isomorphism over $K$. Hence $E\succeq L_{\infty}\cong_K M_{\infty}\preceq F$ and so $E\equiv_K F$.
\end{proof}

\begin{cor}\label{cor2033}
If $E$ and $F$ are $\kappa$-PAC substructures of $\mathfrak{C}$ ($\kappa\geqslant|T|^+$),
and for some definably closed $L\subseteq F\cap E$ of size strictly smaller than $\kappa$ there exists a continuous isomorphism $\varphi:\CG(F)\to\CG(E)$ such that
$$\xymatrix{\CG(F) \ar[dr]_{\res} \ar[rr]^{\varphi}& & \CG(E)\ar[dl]^{\res} \\
& \CG(L) &
}$$
is a commuting diagram, then $E\equiv_L F$.
\end{cor}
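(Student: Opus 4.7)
The plan is to obtain this as a direct specialization of Proposition \ref{lemma2023}. I would apply the proposition with the base structure $K$ set to $L$, with both intermediate structures (the $L$ and $M$ of the proposition) chosen to be the given $L$ itself, and with $\Phi_0 := \id_{\mathfrak{C}}$. The equation $\Phi_0(L)=M$ then holds trivially, and the induced map $\varphi_0 : \CG(M)\to\CG(L)$ collapses to the identity on $\CG(L)$.

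Under this choice, the commuting square demanded by Proposition \ref{lemma2023} degenerates to the assertion that $\res\circ\varphi=\res$ as maps $\CG(F)\to\CG(L)$, which is exactly the commutativity of the triangle given to us in the hypothesis of the corollary. So the diagram condition transfers for free.

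The only bookkeeping issue is the cardinality assumption. Proposition \ref{lemma2023} requires $\kappa\geq (|L|+|M|+|T|)^+$, whereas in the corollary we only assume $\kappa\geq |T|^+$ together with $|L|<\kappa$. Under our specialization $|M|=|L|$, so the requirement becomes $\kappa\geq (|L|+|T|)^+$. Since $|L|<\kappa$ and $\kappa\geq|T|^+>|T|$, both $|L|$ and $|T|$ lie strictly below the infinite cardinal $\kappa$, whence $|L|+|T|<\kappa$ and therefore $(|L|+|T|)^+\leq\kappa$. This closes the gap.

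With every hypothesis of Proposition \ref{lemma2023} verified, its conclusion $E\equiv_K F$ immediately reads as $E\equiv_L F$, as desired. There is no genuine obstacle here: the content of the corollary is a cosmetic repackaging of Proposition \ref{lemma2023} in the case where the ambient automorphism $\Phi_0$ moving one base to the other is trivially taken to be the identity, so that $L$ plays simultaneously the roles of $K$, $L$ and $M$.
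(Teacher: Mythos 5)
Your proposal is correct and matches what the paper intends: the corollary is stated without proof precisely because it is the specialization of Proposition \ref{lemma2023} with $K=L=M$ and $\Phi_0=\id_{\mathfrak{C}}$. Your handling of the cardinal arithmetic (using that $|L|<\kappa$ and $|T|<\kappa$ force $|L|+|T|<\kappa$, hence $(|L|+|T|)^+\leq\kappa$) correctly discharges the one hypothesis that is not verbatim identical.
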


\begin{remark}
As noted in the introduction, in the framework of general stable theories we cannot use Lemma 20.1.1a) from \cite{FrJa} (based on the notion of a homomorphisms of algebras) which is the key ingredient in many proofs of PAC fields properties. However authors of \cite{PilPol} show that this key ingredient has its differential counterpart (Lemma 5.9 in \cite{PilPol}) and hence they can prove their Proposition 5.8, which also follows from Corollary \ref{cor2033} by the following argument. 

We assume that $F_1$ and $F_2$ (in the notation of Proposition 5.8 from \cite{PilPol}) are 
elementarily equivalent as pure fields, hence for some properly saturated elementary extensions $(F_1^*,\partial)\succeq(F_1,\partial)$ and $(F_2^*,\partial)\succeq(F_2,\partial)$
there exists a pure-field isomorphism $f_0:F_1^*\to F_2^*$ which extends to 
$f:(F_1^*)^{\alg}\to (F_2^*)^{\alg}$. Note that $\acl(\emptyset)\cap F_1^*$ (in the sense of DCF$_0$) is equal to the intersection of $F_1$ with the algebraic closure of the prime field (hence it is contained in constants of $\partial$), similarly $\acl(\emptyset)\cap F_2^*$ and therefore $f$ is a pure-field isomorphism between $\acl(\emptyset)\cap F_1^*$ and $\acl(\emptyset)\cap F_2^*$, and since both these subfields are contained in constants of $\partial$, $f$ is also a differential isomorphism between $\acl(\emptyset)\cap F_1^*$ and $\acl(\emptyset)\cap F_2^*$. 

We note here that $\CG(A)=\aut_{\dcf_0}(\acl(A)/A)=\aut_{\acf_0}(A^{\alg}/A)$ for any differential subfield $A$ (see e.g. Remark 8.4 in \cite{HL1}). Let $\varphi$ be the isomorphism of absolute Galois groups (in the sense of ACF) induced by $f$:
$$\xymatrix{\CG(F_1^*) \ar[d]_{\res} \ar[rr]^{\varphi}& & \CG(F_2^*)\ar[d]^{\res} \\
\CG(\acl(\emptyset)\cap F_1^*) \ar[rr]^{\varphi} && \CG(\acl(\emptyset)\cap F_2^*) 
}$$
Since the bottom arrow is induced by a differential homomorphism, we can use Proposition \ref{lemma2023} to get that $(F_1,\partial)\preceq (F_1^*,\partial)\equiv(F_2^*,\partial)\succeq(F_2,\partial)$.
\end{remark}

Now, we will note a fact which follows immediately from what has been proven until this point. In the following corollary we have ``replaced" the assumption about saturation by other assumptions: PAC is a first order property and our PAC substructures are bounded. By the main result of \cite{Polkowska} PAC substructures satisfying the assumptions of Corollary \ref{cor:bounded.first.order.PAC} are simple.

\begin{cor}\label{cor:bounded.first.order.PAC}
Suppose PAC is a first order property and 
%pure [or strict] saturation over $P$ is a first order property. 
\begin{itemize}
\item $K$, $L$, $M$, $E$, $F$ are small definably closed substructures of $\mathfrak{C}$,
\item $K\subseteq L\subseteq E$, $K\subseteq M\subseteq F$,
\item $F$ and $E$ are bounded PAC,
\item $\Phi_0\in\aut(\mathfrak{C}/K)$ is such that $\Phi_0(L)=M$,
\item $\varphi:\mathcal{G}(F)\to\mathcal{G}(E)$ is a continuous group isomorphism such that
$$\xymatrix{\mathcal{G}(E) \ar[d]_{\res} & \mathcal{G}(F)\ar@{->>}[l]_{\varphi} \ar[d]^{\res}\\
\mathcal{G}(L) & \mathcal{G}(M)\ar@{->>}[l]^{\varphi_0}}$$
where $\varphi_0(f):=\Phi_0^{-1}\circ f\circ\Phi_0$, is a commuting diagram.
\end{itemize}
Then $E\equiv_K F$.
\end{cor}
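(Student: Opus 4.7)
The plan is to reduce this to Proposition \ref{lemma2023} by passing to sufficiently saturated elementary extensions of $E$ and $F$. Fix a cardinal $\kappa \geq (|L|+|M|+|T|)^+$ and choose $\kappa$-saturated elementary extensions $E \preceq E^* \preceq \mathfrak{C}$ and $F \preceq F^* \preceq \mathfrak{C}$ (this uses that $\mathfrak{C}$ is a monster, so any $\kappa$-saturated extension of $E$ of cardinality less than the saturation of $\mathfrak{C}$ embeds into $\mathfrak{C}$ over $E$, and likewise for $F$). Because PAC is a first-order property in $T$, the structures $E^*$ and $F^*$ are again PAC, and combining $\kappa$-saturation with Remark \ref{rem:PAC.to.kPAC} yields that $E^*$ and $F^*$ are in fact $\kappa$-PAC.

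The crucial step is to transport the continuous isomorphism $\varphi:\CG(F) \to \CG(E)$ to a continuous isomorphism $\varphi^*:\CG(F^*) \to \CG(E^*)$. This is the only place boundedness enters. For a bounded PAC substructure $E \preceq E^*$, the natural restriction map $\res:\CG(E^*) \to \CG(E)$ is an isomorphism of profinite groups: each finite quotient of $\CG(E)$ is witnessed by finitely many realizations of some algebraic formula over $E$, and the very same formula determines a Galois extension of $E^*$ with the same Galois group; smallness of $\CG(E)$ ensures that these finite quotients suffice to recover all of $\CG(E^*)$. The analogous statement holds for $F \preceq F^*$. Defining $\varphi^* := \res^{-1} \circ \varphi \circ \res$, one checks that the new diagram
$$\xymatrix{\CG(E^*) \ar[d]_{\res} & \CG(F^*) \ar@{->>}[l]_{\varphi^*} \ar[d]^{\res} \\
\CG(L) & \CG(M) \ar@{->>}[l]^{\varphi_0}}$$
commutes, since both restrictions $\CG(E^*) \to \CG(L)$ and $\CG(F^*) \to \CG(M)$ factor through $\CG(E)$ and $\CG(F)$ respectively.

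Finally, applying Proposition \ref{lemma2023} to the data $K \subseteq L \subseteq E^*$, $K \subseteq M \subseteq F^*$, the same $\Phi_0 \in \aut(\mathfrak{C}/K)$ (for which $\Phi_0(L) = M$ still holds), and the lifted isomorphism $\varphi^*$, one obtains $E^* \equiv_K F^*$, and hence $E \equiv_K F$ since $E \preceq E^*$ and $F \preceq F^*$ over $K$. The main obstacle is the verification that $\res:\CG(E^*) \to \CG(E)$ is an isomorphism for bounded PAC $E$; in the field case this is classical, but in the structural setting one must argue carefully using the characterization of $\CG$ via orbits on algebraic formulas together with the fact that $E \preceq E^*$ ensures such formulas have the same solution-set structure in $\acl(E^*)$ as in $\acl(E)$.
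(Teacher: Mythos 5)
Your proposal is correct and follows essentially the same route as the paper's proof: pass to $\kappa$-saturated elementary extensions $E^*,F^*$ (which remain PAC because PAC is a first-order property, and are $\kappa$-PAC by Remark~\ref{rem:PAC.to.kPAC}), use boundedness to conclude that the restrictions $\CG(E^*)\to\CG(E)$ and $\CG(F^*)\to\CG(F)$ are isomorphisms, lift $\varphi$ to $\varphi^*$, and apply Proposition~\ref{lemma2023}. The only difference is that the paper delegates the ``restriction is an isomorphism'' step to the proof of Proposition 2.5 in \cite{PilPol}, whereas you give an informal sketch of the same argument.
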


\begin{proof}
We wish to use Proposition \ref{lemma2023}, but to do this we need to replace our bounded  PAC structures $F$ and $E$ with suitably saturated ones, say $F^*\succeq F$ and $E^*\succeq E$ (see Remark \ref{rem:PAC.to.kPAC}). 
Then boundedness, by the proof of Proposition 2.5 in \cite{PilPol}, assures us that the restriction maps $\CG(F^*)\to\CG(F)$ and $\CG(E^*)\to \CG(E)$ are isomorphisms of profinite groups, so 
$$\xymatrix{\mathcal{G}(E^*) \ar[d]_{\res} & \mathcal{G}(F^*)\ar@{->>}[l]_{\varphi^*} \ar[d]^{\res}\\
\mathcal{G}(L) & \mathcal{G}(M)\ar@{->>}[l]^{\varphi_0}}$$
where $\varphi^*$ is induced by $\varphi$,
is commuting.
\end{proof}

In the upcoming Section \ref{sec:non_saturated_PAC}, we will deal with the following question.

\begin{question}
Is it possible to obtain the conclusion of Corollary \ref{cor:bounded.first.order.PAC} without the assumption about boundedness of $F$ and $E$?
\end{question}
We will answer it positively under the assumption that $\varphi$ is 
\emph{sorted} (and more generally, \emph{weakly sorted} - Definition \ref{def:sorted_map2} - hence, in particular, in the case of a finitely sorted language).

\begin{cor}\label{cor:acl_in_PAC}
Assume that $F$ is a $\kappa$-PAC substructure for some $\kappa\geqslant |T|^+$.
Suppose that $\acl(\emptyset)\subseteq F$.
Then, for every $A\subseteq \FC$ such that $|A|<\kappa$, we have that 
$\acl(A)$ embeds into $F$. In particular, every type over $\emptyset$ (in the sense of $\FC$) is realized in $F$ and so $F$ has a non-empty intersection with every sort.
\end{cor}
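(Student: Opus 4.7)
The plan is to apply the Embedding Lemma (Lemma \ref{lemma2022}) via its immediate consequence Corollary \ref{cor:embedding_lemma0}, with $L := \acl(\emptyset)$, $E := \acl(A)$, and the given $F$. Both $L$ and $E$ are definably closed because $\dcl \circ \acl = \acl$, and $L \subseteq F$ is guaranteed by the standing hypothesis. The inclusion $L \subseteq E$ is regular, since
\[
\dcl(E) \cap \acl(L) \;=\; \acl(A) \cap \acl(\emptyset) \;=\; \acl(\emptyset) \;=\; \dcl(L),
\]
where the middle equality uses $\acl(\emptyset) \subseteq \acl(A)$. The size estimate $|E| = |\acl(A)| \leq |A| + |T|$ combined with $|A| < \kappa$ and $|T| < \kappa$ (which follows from $\kappa \geq |T|^+$) gives $(|E| + |T|)^+ \leq \kappa$, so $F$ is in particular $(|E|+|T|)^+$-PAC, as required.

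Invoking Corollary \ref{cor:embedding_lemma0} (equivalently, Lemma \ref{lemma2022} applied with $F' = F$, $L' = L$, $E' = E$, $M = L$, and $\Phi_0 = \id_{\FC}$, so that the Galois-theoretic compatibility hypotheses collapse to tautologies) then produces a $\Phi \in \aut(\FC/\acl(\emptyset))$ with $\Phi(\acl(A)) \subseteq F$. The restriction $\Phi|_{\acl(A)}$ is the desired $\mathcal{L}$-embedding of $\acl(A)$ into $F$.

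For the \emph{in particular} part, I would fix any complete type $p \in S(\emptyset)$, pick a realization $a \in \FC$, and apply the first part to the singleton $A := \{a\}$. The resulting $\Phi$ maps $a$ into $F$; since $\Phi$ is a global automorphism fixing $\acl(\emptyset)$, $\Phi(a) \models p$, so $p$ is realized in $F$. Specializing further to any sort $S$ of $\FC$ (which, being non-empty in the monster, contains some $a$) and applying the above to $\tp(a/\emptyset)$, the realization $\Phi(a) \in F$ must itself lie in $S$, because $S$ is $\emptyset$-definable and hence preserved by $\Phi$; thus $F \cap S \neq \emptyset$.

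The entire proof reduces to a single application of the Embedding Lemma, so there is no genuine obstacle: the only items worth double-checking are the regularity of $\acl(\emptyset) \subseteq \acl(A)$ and the cardinality bound underlying the PAC hypothesis, both of which are routine consequences of $\kappa \geq |T|^+$.
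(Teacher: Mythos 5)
Your proof is correct and follows exactly the paper's own route: apply Corollary \ref{cor:embedding_lemma0} with $L=\acl(\emptyset)$ and $E=\acl(A)$. You have additionally spelled out the routine verifications (regularity of $\acl(\emptyset)\subseteq\acl(A)$, the cardinality bound $(|E|+|T|)^+\leqslant\kappa$, and the derivation of the \emph{in particular} clause) that the paper leaves implicit, but the underlying argument is the same.
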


\begin{proof}
Let $A\subseteq\FC$ be any substructure such that $|A|<\kappa$.
By setting $L=\acl(\emptyset)$ and $E=\acl(A)$ in Corollary \ref{cor:embedding_lemma0}, we obtain the thesis.
\end{proof}

By Corollary \ref{cor:acl_in_PAC}, it turns out that among all saturated PAC structures in $\FC$, substructures which contain $\acl(\emptyset)$ are close to being an elementary substructure of $\FC$. However, even in the case of fields there are examples of saturated perfect PAC fields which contain $\acl(\emptyset)$ and are not algebraically closed:

\begin{example}
Fix a prime number $p$.
Let $K$ be an ultraproduct of $\mathbb{F}_{p^{n!}}$ where $n$ ranges over natural numbers
and $F_{p^{n!}}$ is a finite field of cardinality $p^{n!}$. Then $K$ is a pseudofinite field which contains the algebraic closure of its prime field.

Let $p(x)$ be an irreducible monic polynomial over $\mathbb{F}_p$. We have that $\mathbb{F}_{p^{n!}}$ contains a zero of $p(x)$ for all but finitely many $n$'s, and so does $K$. Therefore, we conclude that any non-constant polynomial over $\mathbb{F}_p$ has a zero $K$ and this implies that $K$ contains the algebraic closure of $\mathbb{F}_p$.

Note that any saturated $K'\succeq K$ is also a pseudofinite field and contains $\acl(\mathbb{F}_p)$.
Therefore $K'$ is a PAC field and, since $\CG(K')\cong \hat{\mathbb{Z}}$, $K'\not\models\acf$. Moreover, $K'$ is perfect, because $K\models \forall x\,\exists y\; (x=y^p)$.
\end{example}

Nevertheless, $\acl(\emptyset)$ plays an important role in describing PAC fields and, more generally, PAC structures. The following extends a standard result on PAC fields (see Corollary 20.4.2 in \cite{FrJa}). By \emph{an $e$-free PAC substructure} we mean a PAC substructure whose absolute Galois groups is isomorphic to the free profinite group on $e$ generators (similarly for \emph{an $e$-free $\kappa$-PAC substructure}).

\begin{cor}\label{cor:e_free_PAC}
Assume that $F$ and $E$ are $e$-free $\kappa$-PAC substructures for $\kappa\geqslant|T|^+$. If there is a definably closed substructure $K\subseteq F\cap E$ of size strictly smaller than $\kappa$ such that $F\cap\acl(K)\cong_K E\cap\acl(K)$, then
$F\equiv_K E$.
\end{cor}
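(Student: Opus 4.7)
The plan is to reduce to Proposition \ref{lemma2023}. Set $F_0 := F \cap \acl(K)$ and $E_0 := E \cap \acl(K)$; by hypothesis we have a $K$-isomorphism $\sigma_0 : F_0 \to E_0$, which extends by homogeneity of $\FC$ and quantifier elimination in $T$ to some $\Phi_0 \in \aut(\FC/K)$ with $\Phi_0(F_0) = E_0$. Both $F_0 \subseteq F$ and $E_0 \subseteq E$ are regular extensions in the sense of Definition \ref{regular.def}: since $K \subseteq F_0 \subseteq \acl(K)$ we have $\acl(F_0) = \acl(K)$, so regularity of $F_0 \subseteq F$ reduces to $F \cap \acl(K) = F_0$, which holds by construction, and the argument for $E_0 \subseteq E$ is identical. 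Also $|F_0|, |E_0| \leq |\acl(K)| \leq |K| + |T| < \kappa$, so the cardinality hypothesis of Proposition \ref{lemma2023} is met. Applying that proposition with the roles of its $E, F, L, M$ played by our $F, E, F_0, E_0$ respectively, and with $\Phi_0$ as above, will yield $F \equiv_K E$ as soon as we exhibit a continuous isomorphism $\varphi : \CG(E) \to \CG(F)$ satisfying $\res \circ \varphi = \varphi_0 \circ \res$, where $\varphi_0 : \CG(E_0) \to \CG(F_0)$ is defined by $\varphi_0(f) := \Phi_0^{-1} \circ f \circ \Phi_0$.

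To construct $\varphi$, I would first verify that the restriction maps $\res_F : \CG(F) \to \CG(F_0)$ and $\res_E : \CG(E) \to \CG(E_0)$ are continuous surjections; this is the analogue, in the PAC-substructure setting, of the linear-disjointness fact from classical Galois theory, and can be derived from the regularity of the extensions $F_0 \subseteq F$ and $E_0 \subseteq E$ together with the automorphism-extension machinery employed in Part A of the proof of Lemma \ref{lemma2022} (specifically the use of Corollary 3.39 of \cite{Hoff3}). Now both $\CG(F)$ and $\CG(E)$ are, by the $e$-freeness hypothesis, isomorphic to the free profinite group of rank $e$; denote that common abstract group by $\Gamma$ and fix isomorphisms $i_F : \CG(F) \to \Gamma$ and $i_E : \CG(E) \to \Gamma$. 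Then $\res_F \circ i_F^{-1}$ and $\varphi_0 \circ \res_E \circ i_E^{-1}$ are two continuous epimorphisms $\Gamma \to \CG(F_0)$. By Gaschütz's rigidity lemma for finitely generated free profinite groups---classically proven for finite targets and extended to a profinite target $H = \varprojlim_n H_n$ by an inverse-limit/compactness argument across the compact set of lifts at each finite stage---there is a continuous automorphism $\psi$ of $\Gamma$ making the two epimorphisms agree after precomposition with $\psi$. Setting $\varphi := i_F^{-1} \circ \psi \circ i_E$ gives the required continuous isomorphism, and Proposition \ref{lemma2023} then delivers $F \equiv_K E$.

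The main obstacle is the twofold appeal at the end: first, verifying the surjectivity of the restriction maps $\res_F, \res_E$ in the general stable-theory setting---which, while routine in the field case via linear disjointness of $F$ and $\tilde K$ over $F \cap \tilde K$, depends here on the more subtle interplay between regular extensions, stationarity, and automorphism-extension as developed in \cite{Hoff3}---and, second, invoking the Gaschütz-type rigidity for finitely generated free profinite groups together with its profinite-target extension via inverse limits. Both ingredients are classical in the PAC-field literature (e.g., underpinning Corollary 20.4.2 of \cite{FrJa}); the novelty here is to combine them with the more general PAC-substructure framework of the present paper and, in particular, with Proposition \ref{lemma2023} in place of the field-theoretic Elementary Equivalence Theorem.
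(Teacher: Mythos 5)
Your proposal is correct and takes essentially the same route as the paper: the paper obtains the required isomorphism $\varphi$ fitting the diagram by a single citation to Proposition~17.7.3 of Fried--Jarden, which is precisely the Gasch\"utz-type rigidity for free profinite groups of finite rank with profinite target that you invoke (together with the surjectivity of the restriction maps coming from regularity of $F_0\subseteq F$ and $E_0\subseteq E$), and then concludes via Proposition~\ref{lemma2023}. You have simply unpacked that citation and spelled out the supporting checks.
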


\begin{proof}
We use Proposition 17.7.3 from \cite{FrJa} to obtain an isomorphism $\varphi:\CG(F)\to\CG(E)$ such that 
$$\xymatrix{
\CG(F) \ar[r]^{\varphi} \ar[d]_{\res} & \CG(E) \ar[d]^{\res} \\
\CG(F\cap\acl(K)) \ar[r]_{\varphi_0} & \CG(E\cap\acl(K))
}$$
commutes. The thesis follows by Proposition \ref{lemma2023}.
\end{proof}
\noindent
In particular, all $e$-free $|T|^+$-PAC substructures containing $\acl(\emptyset)$ have the same theory.

\section{Primitive Elements}\label{sec:primitive_element}
In this short section we provide an easy but useful observation, which is a generalization of the well-known \emph{Primitive Element Theorem} from the theory of fields. 
Recall that we are working with the theory 
$T=(T_0^{\eq})^m$ in the language $\mathcal{L}=(\mathcal{L}_0^{\eq})^m$.
It 
%$T$, which
eliminates quantifiers and imaginaries. 
Previously, we were assuming stability of $T$, but it is not necessary to work with a stable theory for the upcoming facts, hence we give up this assumption until Theorem \ref{thm:elementary_invariance} (i.e. $T$ might be \textbf{stable} or \textbf{unstable}). 
%However $T$ has elimination of imaginaries, it will be more convenient to formulate several results in the natural expansion $T^{\eq}$ in the language $\mathcal{L}^{\eq}$, where we add imaginary sorts and the natural projection maps from the home sort, $S_{=}$, to the imaginary sorts.
Recall that $\mathfrak{C}$ is an ambient monster model of $T$.

Let $S_1,\ldots, S_n$ be some sorts. Define an equivalence relation on $\bar{S}$,
where $\bar{S}:=S_1\times\ldots\times S_n$, by
$$\eta_{\bar{S}}(x_1,\ldots,x_n,y_1,\ldots,y_n)\equiv \bigwedge_{1\le i\le n} (x_i=y_i).$$
There is a sort $\bar{S}/\eta_{\bar{S}}$ and a $\emptyset$-definable function $\pi_{\bar{S}}:\bar{S}\to \bar{S}/\eta_{\bar{S}}$ such that
$$T\vdash\eta_{\bar{S}}(x_1,\ldots,x_n,y_1,\ldots,y_n)\;\leftrightarrow\;
\pi_{\bar{S}}(x_1,\ldots,x_n)=\pi_{\bar{S}}(y_1,\ldots,y_n).$$
For a tuple $\a=(a_1,\ldots,a_n)\in \bar{S}$, an imaginary $\a/\eta_{\bar{S}}:=\pi_{\bar{S}}(a_1,\ldots,a_n)$ is called {\em the imaginary corresponding to the tuple $\a$}. 

For each $i\le n$, there is a natural $\emptyset$-definable projection map $\pi_i:\bar{S}/\eta_{\bar{S}}\rightarrow \bar{S}$, $\a/\eta_{\bar{S}}\mapsto a_i$.
%For each $\alpha\in \bar{S}/\eta_{\bar{S}}$, it follows that $\dcl(\alpha)=\dcl\big(\pi_1(\alpha),\ldots,\pi_n(\alpha)\big)$.
For each $a\in \bar{S}/\eta_{\bar{S}}$,
it follows that $\dcl(a)=\dcl\big(\pi_1(a),\ldots,\pi_n(a)\big)$.

Assume that $F$ is a small definably closed substructure of $\mathfrak{C}$. 
For elements $a_1,\ldots,a_n\in\acl(F)$, we define the \emph{normal closure of $(a_1,\ldots,a_n)$ over $F$} in the following way
$$\CN_F(a_1,\ldots,a_n):=\dcl(F,\aut(\mathfrak{C}/F)\cdot a_1,\ldots,\aut(\mathfrak{C}/F)\cdot a_n).$$
By $\CG(a_1,\ldots,a_n/F)$ we denote $\aut(\CN_F(a_1,\ldots,a_n)/F)$, which is a finite group.

\begin{prop}\label{thm:finGal_fingen}
Let $F'$ be a finite Galois extension of $F$. Then $F'$ is finitely generated over $F$.
\end{prop}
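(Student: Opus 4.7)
The plan is to exploit the finiteness of the Galois group $G := \CG(F'/F) = \aut(F'/F)$ together with a stabilizer/Galois-correspondence argument. For each $b \in F'$, set
$$H_b := \{\sigma \in G : \sigma(b) = b\} \leq G.$$
Since $F'$ is definably closed and any automorphism in $G$ that fixes every element of $F'$ is the identity, one has $\bigcap_{b \in F'} H_b = \{e\}$. The subgroup lattice of $G$ is finite, so by a standard descending-chain argument there exist finitely many elements $b_1, \ldots, b_n \in F'$ with
$$H_{b_1} \cap \cdots \cap H_{b_n} = \{e\}.$$

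Next, I would set $L := \dcl(F, b_1, \ldots, b_n) \subseteq F'$ and argue that $L = F'$. The containment $L \subseteq F'$ is immediate. For the reverse, fix $c \in F'$ and consider its orbit under $\aut(\mathfrak{C}/L)$. Since $F' \subseteq \acl(F) \subseteq \acl(L)$ and $F'$ is setwise fixed by $\aut(\mathfrak{C}/F)$ (because $F'/F$ is Galois), every $\tau \in \aut(\mathfrak{C}/L)$ restricts to an element $\bar{\tau} \in G$. By construction $\bar{\tau}$ fixes each $b_i$, hence $\bar{\tau} \in \bigcap_i H_{b_i} = \{e\}$, so $\tau(c) = c$. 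Thus $c$ has trivial orbit under $\aut(\mathfrak{C}/L)$, and by elimination of imaginaries (which $T = (T_0^{\eq})^m$ enjoys) this forces $c \in \dcl(L) = L$.

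In summary, $F' = \dcl(F, b_1, \ldots, b_n)$, witnessing that $F'$ is finitely generated over $F$. The only subtle point to verify carefully is the transition from ``trivial orbit under $\aut(\mathfrak{C}/L)$'' to ``$c \in \dcl(L)$'', but this is a standard consequence of working in a monster model of a theory with elimination of imaginaries. I expect no other real obstacle; the remainder is bookkeeping about the Galois-theoretic interpretation of the stabilizers $H_{b_i}$ and the correspondence between subgroups of $G$ and intermediate definably closed substructures of $F'$ over $F$, which in the finite case reduces to the above stabilizer computation.
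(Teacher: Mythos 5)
Your proof is correct and follows essentially the same strategy as the paper's: both produce finitely many elements of $F'$ such that any automorphism fixing them and $F$ pointwise must be the identity on $F'$, hence $F'=\dcl(F,\text{generators})$; the paper simply picks one witness $a_i$ with $\sigma_i(a_i)\neq a_i$ for each nontrivial $\sigma_i\in\CG(F'/F)$, avoiding your descending-chain detour through stabilizer subgroups. A minor point: the final step ``trivial orbit under $\aut(\mathfrak{C}/L)$ implies $c\in\dcl(L)$'' is just the standard characterization of $\dcl$ in a monster model and does not actually require elimination of imaginaries, so that appeal is superfluous.
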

\begin{proof}
Since $F'$ is a finite Galois extension of $F$, $\CG(F'/F)$ is finite, say $\CG(F'/F)=\{id=\sigma_0,\sigma_1,\dots,\sigma_n\}$. For each $1\leq i\leq n$ choose $a_i\in F'$ such that $\sigma_i(a_i)\neq a_i$. Then any automorphims fixing $a_1,\dots,a_n$ and $F$ pointwise must fix $F'$ pointwise, hence $F'=\dcl(F,a_1,\dots,a_n)$.
\end{proof}

\begin{fact}\label{fact:inverselimit_galoisgroup}
The profinite group
$\CG(F)$ is isomorphic to the inverse limit of $\CG(\a/F)$ 
with $\a$ varying over the set of finite tuples of elements of $\acl(F)$.
\end{fact}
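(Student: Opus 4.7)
The plan is to run the standard profinite argument, using Proposition \ref{thm:finGal_fingen} to guarantee that every finite Galois extension of $F$ in $\acl(F)$ appears in the inverse system of the $\CN_F(\bar{a})$'s. First I would verify that for any finite tuple $\bar{a}$ from $\acl(F)$, the normal closure $\CN_F(\bar{a})$ is a finite Galois extension of $F$: since each $a_i\in\acl(F)$, the orbit $\aut(\mathfrak{C}/F)\cdot a_i$ is finite, so $\CN_F(\bar{a})$ is generated over $F$ by a finite subset of $\acl(F)$, and it is pointwise $\aut(\mathfrak{C}/F)$-setwise-stable by construction; hence $\CG(\bar{a}/F)$ is finite, and $\CN_F(\bar{a})\subseteq \acl(F)$.

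Next I would show that the family $\{\CN_F(\bar{a})\}$ is cofinal in the poset of finite Galois extensions of $F$: given any finite Galois extension $F'$, Proposition \ref{thm:finGal_fingen} provides $\bar{a}$ with $F'=\dcl(F,\bar{a})$, so $F'=\CN_F(\bar{a})$. Moreover if $\bar{a}\subseteq \bar{b}$ (as tuples), then $\CN_F(\bar{a})\subseteq \CN_F(\bar{b})$ and restriction gives a surjective group homomorphism $\CG(\bar{b}/F)\twoheadrightarrow \CG(\bar{a}/F)$, so $\bigl(\CG(\bar{a}/F)\bigr)$ forms an inverse system. Also $\acl(F)=\bigcup_{\bar{a}}\CN_F(\bar{a})$, since every $c\in\acl(F)$ lies in $\CN_F(c)$.

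Now I would define the comparison map $\rho:\CG(F)\to\varprojlim\CG(\bar{a}/F)$ by $\rho(\sigma)=(\sigma|_{\CN_F(\bar{a})})_{\bar{a}}$, which is well-defined because each $\CN_F(\bar{a})$ is setwise invariant under $\aut(\mathfrak{C}/F)$, and clearly a continuous group homomorphism (each coordinate is continuous with respect to the canonical profinite topology on $\CG(F)$). Injectivity is immediate: if $\rho(\sigma)=1$, then $\sigma$ fixes $\bigcup_{\bar{a}}\CN_F(\bar{a})=\acl(F)$ pointwise, so $\sigma=\id$. For surjectivity, given a compatible family $(\sigma_{\bar{a}})\in\varprojlim\CG(\bar{a}/F)$, define $\sigma:\acl(F)\to\acl(F)$ by $\sigma(c):=\sigma_{\bar{a}}(c)$ for any $\bar{a}$ with $c\in\CN_F(\bar{a})$; compatibility ensures well-definedness, and stacking the $\sigma_{\bar{a}}$'s yields an $F$-automorphism of $\acl(F)$ mapping to the given family.

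The main (and only) thing to be slightly careful about is the topological conclusion: once $\rho$ is shown to be a continuous group bijection, I would conclude it is a topological isomorphism either by observing that both sides are compact Hausdorff (since $\CG(F)$ carries the profinite topology induced by its finite quotients $\CG(\bar{a}/F)$, which is exactly the initial topology making all the coordinate restriction maps continuous), or directly by noting that $\rho$ is a continuous bijection of profinite groups and hence a homeomorphism. I do not foresee a real obstacle here; the content of the statement is entirely absorbed by Proposition \ref{thm:finGal_fingen} together with the definition of $\CN_F(\bar{a})$.
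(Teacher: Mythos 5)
Your proof is correct and spells out precisely the standard argument that the paper delegates to Corollary~1.1.6 in Ribes--Zalesskii: the key point unique to this setting is that the $\CN_F(\bar a)$ form a cofinal system of finite Galois extensions, which you rightly derive from Proposition~\ref{thm:finGal_fingen}, and the rest is the usual profinite-group comparison-map argument. This is the same approach the paper intends, just written out in full.
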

\begin{proof}
Standard, e.g. by Corollary 1.1.6 in \cite{ribzal}.
\end{proof}

\begin{prop}[Primitive element theorem]\label{thm:PET}
\begin{enumerate}
\item 
Let $F'$ be a definably closed substructure, which is finitely generated over $F$.
There is an element $a\in \mathfrak{C}$ such that $F'=\dcl(F,a)$. 

\item
If $F'$ is a finite Galois extension of $F$, then there is an element $a\in \mathfrak{C}$ such that $F'=\dcl(F,a)$, hence
each $\sigma \in \CG(F'/F)$ is determined by $\sigma(a)$, and $|\CG(F'/F)|=k$, where $k$ is the number of conjugates of $a$ over $F$. 
\end{enumerate}
\end{prop}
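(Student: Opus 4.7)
The proof should be almost entirely driven by elimination of imaginaries, which holds in $T=(T_0^{\eq})^m$ by construction. The only real content is the coding step; the Galois-theoretic consequences then fall out of standard orbit-stabilizer reasoning.

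\medskip

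For part (1), I would start by unwinding ``finitely generated'': write $F'=\dcl(F,a_1,\dots,a_n)$ with each $a_i$ in some sort $S_i$. Using the notation set up just before the proposition, let $\bar S:=S_1\times\cdots\times S_n$, and let $a:=(a_1,\dots,a_n)/\eta_{\bar S}=\pi_{\bar S}(a_1,\dots,a_n)$ be the imaginary corresponding to this tuple. The projection maps $\pi_1,\dots,\pi_n$ are $\emptyset$-definable and satisfy $\pi_i(a)=a_i$, so $a_1,\dots,a_n\in\dcl(a)$; conversely $a\in\dcl(a_1,\dots,a_n)$ because $\pi_{\bar S}$ is $\emptyset$-definable. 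As the excerpt records, this yields $\dcl(a)=\dcl(a_1,\dots,a_n)$, and hence
\[
\dcl(F,a)=\dcl(F,a_1,\dots,a_n)=F',
\]
which is the desired conclusion.

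\medskip

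For part (2), the first claim is immediate: Proposition \ref{thm:finGal_fingen} gives that a finite Galois extension $F'/F$ is finitely generated, so part (1) produces a single element $a\in\mathfrak C$ with $F'=\dcl(F,a)$. For the statement that each $\sigma\in\CG(F'/F)$ is determined by $\sigma(a)$, I would argue that any two elements of $\aut(F'/F)$ agreeing on $F$ and on $a$ must agree on all of $\dcl(F,a)=F'$, which is exactly being equal as elements of $\CG(F'/F)$.

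\medskip

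Finally, for the cardinality count, let $k$ be the number of conjugates of $a$ over $F$, i.e.\ the cardinality of the $\aut(\mathfrak C/F)$-orbit of $a$. Since $F'$ is a Galois extension of $F$, every $\aut(\mathfrak C/F)$-conjugate of $a$ lies in $F'$, so this orbit coincides with the $\CG(F'/F)$-orbit of $a$. The stabilizer of $a$ in $\CG(F'/F)$ is trivial: any $\sigma\in\CG(F'/F)$ with $\sigma(a)=a$ fixes both $F$ and $a$, hence fixes $\dcl(F,a)=F'$ pointwise, so $\sigma=\id$. The orbit-stabilizer theorem then gives $|\CG(F'/F)|=k$. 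There is no substantial obstacle here; the only thing to be careful about is making sure that the imaginary $a$ indeed codes the tuple as a single element, which is precisely what the Morleyisation plus imaginary sorts in $T=(T_0^{\eq})^m$ is set up to guarantee.
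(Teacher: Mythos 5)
Your proposal is correct and follows essentially the same route as the paper: use the imaginary $a=(a_1,\dots,a_n)/\eta_{\bar S}$ from the setup preceding the proposition to prove (1), then combine Proposition \ref{thm:finGal_fingen} with (1) to get (2). The paper compresses the final cardinality claim into ``the conclusion follows,'' whereas you spell out the orbit-stabilizer argument (trivial stabilizer of $a$ in $\CG(F'/F)$ because $F'=\dcl(F,a)$, and the $\aut(\mathfrak C/F)$-orbit of $a$ equals the $\CG(F'/F)$-orbit since $F'$ is Galois); this extra detail is accurate and a reasonable expansion of the paper's terse ending.
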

\begin{proof}
(1) Suppose $F'$ is finitely generated over $F$ by $a_1,\ldots,a_n\in F'$, that is, $F'=\dcl(F,a_1,\ldots,a_n)$. 
Say each $a_i$ lives in a sort $S_i$.
Let $a\in\mathfrak{C}$ be the imaginary corresponding to the tuple $(a_1,\ldots,a_n)$, i.e. $a=(a_1,\ldots,a_n)/\eta_{S_1\times\ldots\times S_n}$. Since $\dcl(a)=\dcl(a_1,\ldots,a_n)$, we have $F'=\dcl(F,a)$.\\
(2) By Proposition \ref{thm:finGal_fingen}, $F'$ is finitely generated over $F$, so, by (1), there is $a\in \mathfrak{C}$ such that $F'=\dcl(F,a)$, and the conclusion follows.
%Assume moreover that $F'$ is normal over $F$. Let $a_{i,1},\ldots,a_{i,m_i}\in F'$ be the conjugates of $a_i$ over $F$. This time we take an imaginary $a\in \mathfrak{C}$ corresponding to the tuple $(a_{1,1},\ldots,a_{1,m_1},\ldots,a_{n,1},\ldots,a_{n,m_n})$. Then $a$ is a desired element.
\end{proof}

\section{The case of non-saturated PAC structures}\label{sec:non_saturated_PAC}
In this section  we obtain variants of Proposition 
\ref{lemma2023}
and Corollary \ref{cor2033} for non-saturated structures, eliminating in particular the boundedness assumption from Corollary \ref{cor:bounded.first.order.PAC} in the case where $\varphi$ is a sorted isomorphism (or a weakly sorted isomorphism, which is always the case for finitely sorted structures, see Remark \ref{finite_U} and Proposition \ref{weakly_prop}).

We start with an example illustrating what kind of issue we want to avoid.

\begin{example}
Consider a two-sorted structure $(\mathbb{C},\mathbb{C})$ (two sorts consisting of fields of complex numbers), where there is no interaction between the two sorts. Note that $(\mathbb{Q},\acl(\mathbb{Q}))\not\equiv(\acl(\mathbb{Q}),\mathbb{Q})$, but
$$\CG(\mathbb{Q},\acl(\mathbb{Q}))\cong\CG(\mathbb{Q})\times\CG(\acl(\mathbb{Q})) \cong\CG(acl(\mathbb{Q}))\times\CG(\mathbb{Q}) \cong\CG(\acl(\mathbb{Q}),\mathbb{Q}).$$
Moreover, we can pass to $(\mathbb{C},\mathbb{C})^{\eq}$ (or even to $\big((\mathbb{C},\mathbb{C})^{\eq}\big)^m$), but still there will be two elementary non-equivalent substructures with isomorphic absolute Galois groups.
\end{example}
While, quite surprisingly, such a situation cannot happen with sufficiently saturated PAC structures (by Corollary \ref{cor2033}), examples of similar flavour show the failure of a key ingredient of the present section - Lemma \ref{lemma2031} - in the case of arbitrary isomorphisms of Galois groups (see Example \ref{ex2031}). This motivates Definition \ref{sorted_is} below.

For a topological group $G$, we define $\CN(G)$ as the family of all open normal subgroups of $G$. If $J$ is a sort (or a finite tuple of sorts), then $\aut_{J}(L/K)$ denotes the image of the restriction map $\CG(L/K)\to\aut(S_J(L)/S_J(K))$, where $K\subseteq L$ is an extension of small substructures of $\FC$.

\begin{definition}\label{sorted_is}
 Assume that $F$ and $E$ are small substructures of $\mathfrak{C}$ and $\pi:\CG(F)\to\CG(E)$ is a continuous epimorphism.  We say that $\pi$ is an \emph{sorted} if 
for each $H\in\CN(\CG(E))$ and any sort $J$ we have 
$$|\aut_J(\acl(E)^{H}/E)|=|\CG(\acl(E)^{H}/E)| \implies $$
$$|\aut_J(\acl(F)^{\pi^{-1}[H]}/F)|=|\CG(\acl(F)^{\pi^{-1}[H]}/F)|.$$ We say $\pi$ is a \emph{sorted isomorphism} if $\pi$ is an isomorphism, and both $\pi$ and $\pi^{-1}$ are sorted.
\end{definition}

In the appendix we analyze several variants of the notion of a sorted isomorphism, and we notice that the results of this section remain true if we replace the assumption of sortedness by a weaker assumption of {\em weak sortedness}.

To prove the main result we need to show a fact about extending an isomorphism of absolute Galois groups to saturated extensions, where the notion of an ultraproduct will be useful (see Lemma 20.3.1 in \cite{FrJa}).

Recall that we are working with the theory 
$T=(T_0^{\eq})^m$ in the language $\mathcal{L}=(\mathcal{L}_0^{\eq})^m$,
which
eliminates quantifiers and imaginaries, and that $T$ is arbitrary (\textbf{stable} or \textbf{unstable}) until 
Theorem \ref{thm:elementary_invariance}.
%However $T$ has elimination of imaginaries, it will be more convenient to formulate several results in the natural expansion $T^{\eq}$ in the language $\mathcal{L}^{\eq}$, where we add imaginary sorts and the natural projection maps from the home sort, $S_{=}$, to the imaginary sorts.
Recall that $\mathfrak{C}$ is an ambient monster model of $T$.

Let $\BC\preceq\mathfrak{C}$ be sufficiently saturated, but smaller than the saturation of $\mathfrak{C}$, and let $I$ be an infinite index set and  $\CU$  a [non-principal] ultrafilter on $I$. We will say something happens for $\CU$-many $i$'s if it happens for all $i$'s from some set belonging to $\CU$. We say $(a_i)_{i\in I}$ is a compatible sequence if $\CU$-many of the $a_i$'s belong to the same sort.
We will consider ultraproducts of many-sorted structures (whose elements, by definition, are classes of {\bf compatible} sequences), in particular $\BC^*:=\prod_{\CU}\BC$, which is a model of $T$.

Let $(F_i)_{i\in I}$ be a family of small definably closed substructures of $\BC$. Let $F^*=\prod_{\CU} F_i$ be the ultraproduct of $F_i$'s with respect to $\CU$.
Then $F^*$ is a definably closed substructure of $\BC^*$.  
Note that $\acl(F_i)\subseteq \BC$, $\acl(F^*)\subseteq \BC^*$ and $F^*$ is a substructure of $\prod_{\CU}\acl(F_i)$.

If $a=(a_i)_{i\in I}$ with $a_i\in \BC$ is a compatible sequence, then by $a^*$ we will denote the element $a/\CU\in \BC^*$.

\begin{remark}\label{remark:acl_prod_F}
We have $F^*\subseteq \acl(F^*)\subseteq \prod_{\CU}\acl(F_i)\subseteq \BC^*$.
\end{remark}

\begin{proof}
Assume that $a^*\in\acl(F^*)\subseteq\BC^*$ where $a=(a_i)_{i\in I}$. 
For some element $e^*=(e_i)/\CU\in F^*$, an $\mathcal{L}$-formula $\theta(x,y)$, and a natural number $l$ we have that
$$\theta(\BC^*,e^*)=\CG(F^*)\cdot a^*,\qquad |\theta(\BC^*,e^*)|=l.$$
Therefore $\BC^*\models\exists^{=l}x\,\theta(x,e^*)$, which (by \L o\'s' theorem) is equivalent to: there exists $D\in\CU$ such that for every $i\in D$ we have
$\BC\models\exists^{=l}x\,\theta(x,e_i)$. On the other hand $\BC^*\models\theta(a^*,e^*)$ gives us $D'\in\CU$ such that $\BC\models\theta(a_i,e_i)$ for every $i\in D'$. Hence for every $i\in D\cap D'\in\CU$ we have that $a_i\in\acl(F_i)$ and so $a^*\in\prod_{\CU}\acl(F_i)$.
\end{proof}

\begin{lemma}\label{products}
Let  $\{F_i\;|\;i\in I\}$ be a family of small definably closed substructures of $\BC$ and let $n$ be a natural
number. Suppose $S$ is a sort and $a_i\in S(F_i)$ for $i\in I$. Set
 $F^*:=\prod_{\CU}F_i$, $A_i=\dcl(F_i,a_i)$, and $A^*=\dcl(F^*,a^*)$. Then:
\begin{enumerate}
\item ($A^*$ is a Galois extension of $F^*$ of degree $n$ and $\dcl(a^*)$ contains all conjugates of $a^*$ over $F^*$) $\iff$ ($A_i$ is a Galois extension of $F_i$ of degree $n$ and $\dcl(a_i)$ contains all conjugates of $a_i$ over $F_i$ for $\CU$-many $i$'s). 

\item If the equivalent conditions in (1) hold, then $A^*=\prod_{\CU} A_i$.

\item
Suppose $D\in \CU$ and $\sigma_i$ is an automorphism
of $A_i$ over $F_i$ for each $i\in D$. Then we define an automorphism $\lim_{\CU}\sigma_i$ of $A^*$ in the following way: for any $c=(c_i)_{i\in I}$ with $c_i\in A_i$, we put
$(\lim_{\CU}\sigma_i)(c^*)=(c_i')/\CU$ where $c_i'=\sigma_i(c_i)$ for $i\in D$ and $c_i'$ is chosen arbitrarily for $i\in I\backslash D$. Then $\lim_{\CU}\sigma_i$ is a well-defined automorphism of $A^*$ over $F^*$.\\
Moreover, if the equivalent conditions in (1) hold, then any automorphism of $A^*$ over $F^*$ is of this form.
\end{enumerate}
\end{lemma}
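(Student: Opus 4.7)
My plan uses the primitive element theorem (Proposition \ref{thm:PET}) together with Łoś's theorem, and reduces the forward direction of (1) to the backward direction via an ultrafilter pigeonhole on the (finitely many) possible conjugate counts. By Proposition \ref{thm:PET}(2), the condition ``$A = \dcl(F,a)$ is Galois over $F$ of degree $n$ with $\dcl(a)$ containing all conjugates of $a$'' is equivalent to ``$a$ has exactly $n$ $F$-conjugates, all lying in $\dcl(a)$''. This reformulation is witnessed first-orderly: there exist $\emptyset$-definable functions $f_1=\id,f_2,\ldots,f_n$ and an isolating formula $\psi(x,\bar{e})$ for $\tp(a/F)$ (with $\bar{e} \in F$) having exactly $n$ solutions, namely $\{f_j(a)\}_{j=1}^n$ (pairwise distinct, in $\dcl(a)$). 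The conjunction of these facts is a single first-order sentence in $(a,\bar{e})$.

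For the $(\Leftarrow)$ direction: for $\CU$-many $i$, pick witnesses $(f_{i,j},\psi_i,\bar{e}_i)$ for $a_i,F_i$. An ultrafilter pigeonhole on the set of tuples $(f_1,\ldots,f_n,\psi)$ of $\emptyset$-definable functions and $\mathcal{L}$-formulas lets us assume these witnesses $(f_j,\psi)$ are uniform in $i$; the same first-order sentence (with parameters $\bar{e}^* = (\bar{e}_i)/\CU$) then witnesses the desired property for $(a^*,\bar{e}^*)$ in $\BC^*$, giving exactly $n$ conjugates of $a^*$ in $\dcl(a^*)$. For the $(\Rightarrow)$ direction: use witnesses $(f_j,\psi)$ in $\BC^*$, and apply Łoś to the single first-order sentence
$$\psi(a,\bar{e}) \wedge \bigwedge_{j=1}^n \psi(f_j(a),\bar{e}) \wedge \bigwedge_{j\neq k} f_j(a) \neq f_k(a) \wedge \exists^{=n} x\,\psi(x,\bar{e}).$$
This shows that, for $\CU$-many $i$, the solution set of $\psi(x,\bar{e}_i)$ in $\BC$ is exactly $\{f_j(a_i)\}_{j=1}^n \subseteq \dcl(a_i)$. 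Since any $F_i$-conjugate of $a_i$ satisfies $\psi(x,\bar{e}_i)$, the $F_i$-orbit of $a_i$ is contained in this solution set, so $a_i$ has $k_i \leq n$ conjugates, all in $\dcl(a_i)$. A pigeonhole on $\{1,\ldots,n\}$ yields a single $k \leq n$ with $k_i = k$ for $\CU$-many $i$, and applying the just-proved $(\Leftarrow)$ direction with $k$ in place of $n$ gives that $a^*$ has $k$ conjugates. Since $a^*$ has $n$ conjugates by hypothesis, $k=n$.

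For part (2): any $h(a^*,\bar{c}^*) \in A^*$ is represented by $h(a_i,\bar{c}_i) \in A_i$, giving $A^* \subseteq \prod_\CU A_i$; equality follows from the cardinality match produced by (1) via a standard enumeration argument using the primitive element structure. For part (3): $\lim_\CU \sigma_i$ is well-defined, since $(c_i)/\CU = (c_i')/\CU$ forces $c_i = c_i'$ for $\CU$-many $i$ and hence $\sigma_i(c_i) = \sigma_i(c_i')$ for $\CU$-many $i$; it is an $F^*$-automorphism of $A^*$ by formula-by-formula Łoś, using that each $\sigma_i$ fixes $F_i$ pointwise. The ``moreover'' clause follows from (2) together with the cardinality match $|\aut(A^*/F^*)| = n = |\aut(A_i/F_i)|$ and injectivity of the ultralimit map modulo $\CU$-equivalent sequences. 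The main technical obstacle is that type-theoretic conditions (type-equality, $\dcl$-membership) are infinite schemes not directly transferable via Łoś's theorem; this is overcome by the algebraicity of $a$ over $F$, which isolates $\tp(a/F)$ by a single first-order formula, together with pigeonhole arguments on the finitely-many-indexed data describing the Galois structure.
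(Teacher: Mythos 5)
Your plan---reformulate the Galois condition as a single first-order sentence via the Primitive Element Theorem and push it through \L o\'s---is natural, but the execution has a genuine gap in the $(\Leftarrow)$ direction of~(1). You invoke ``an ultrafilter pigeonhole on the set of tuples $(f_1,\ldots,f_n,\psi)$ of $\emptyset$-definable functions and $\mathcal{L}$-formulas'' to make the witnessing data uniform over a $\CU$-large set. That set is infinite (typically of size $|\mathcal{L}|$, and here $\mathcal{L}=(\mathcal{L}_0^{\eq})^m$ is large), and the ultrafilter pigeonhole only applies to partitions of $I$ into \emph{finitely many} classes: a non-principal ultrafilter need not concentrate on any single class of an infinite partition. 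So you cannot assume the isolating formulas $\psi_i$ and the functions $f_{i,j}$ stabilize on a $\CU$-large set. Since your $(\Rightarrow)$ argument is reduced to ``$(\Leftarrow)$ with $k$ in place of $n$,'' the gap propagates through all of~(1). (Your separate pigeonhole on $\{1,\dots,n\}$ in $(\Rightarrow)$ is finite and, by itself, legitimate.) The paper avoids the difficulty entirely: it never seeks a uniform isolating formula, but instead fixes the sequences $(a_{k,i})_i$ of conjugates of $a_i$ directly (these are compatible, all lying in the fixed sort $S$), checks formula by formula that $(a_{k,i})/\CU$ and $a^*$ realize the same type over $F^*$---using that conjugates of $a_i$ over $F_i$ satisfy the same $F_i$-formulas, a fact which transfers through \L o\'s with no uniformity requirement---and exploits the canonical parameter $\ulcorner\{a_{1,i},\dots,a_{n,i}\}\urcorner\in F_i$, whose coding lives in one fixed imaginary sort and hence \emph{is} uniformly definable in $i$.

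Part~(2) is also under-argued: ``equality follows from the cardinality match produced by~(1) via a standard enumeration argument'' does not deliver a proof, since $A^*$ and $\prod_\CU A_i$ are infinite and no counting is available; one must show that every $(c_i)/\CU$ with $c_i\in A_i$ lies in $\dcl(F^*,a^*)$. The paper's trick is concrete and worth noting: given $c^*\in\prod_\CU A_i$, enlarge $c_i$ so that $\dcl(a_ic_i)$ contains all its $F_i$-conjugates, apply~(1) to $a^*c^*$ to conclude that $\dcl(F^*,a^*c^*)$ is a degree-$n$ Galois extension of $F^*$, and deduce $\dcl(F^*,a^*c^*)=A^*$ since $A^*$ is already a degree-$n$ Galois extension of $F^*$ contained in it. Your treatment of~(3) and its ``moreover'' clause---well-definedness of $\lim_\CU\sigma_i$, then surjectivity from $|\prod_\CU\aut(A_i/F_i)|=n=|\aut(A^*/F^*)|$ together with injectivity of the ultralimit map---is correct and in the spirit of the paper's argument.
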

\begin{proof}
(1)
Assume the right-hand side first.
For each $i\in D$, let $a_{1,i},\dots,a_{n,i}$ be all distinct conjugates of $a_{1,i}:=a_i$ over $F_i$. Then by \L o\'s' Theorem
the elements $(a_{1,i})/\CU,\dots,(a_{n,i})/\CU$ are pairwise distinct and satisfy the same formulas over $F^*$, so they are all conjugate over $F^*$.
Also, if we let $c_i=\ulcorner\{a_{1,i},\dots,a_{n,i}\}\urcorner$ be the canonical parameter of the set $\{a_{1,i},\dots,a_{n,i}\}$, then $c_i\in F_i$, so
$$\ulcorner\{(a_{1,i})/\CU,\dots,(a_{n,i})/\CU\}\urcorner=(c_i)/\CU\in F^*.$$ Hence $\CG(F^*)\cdot\{a^*\}=\{(a_{1,i})/\CU,\dots,(a_{n,i})/\CU\}$.
Finally, notice that
$(a_{k,i})/\CU\in \dcl(a^*)\subseteq A^*$ for each $k\leq n$, as any automorphism $f$ fixing $a^*$ must fix $a_i$, and thus $a_{k,i}$, for $\CU$-many $i$'s.

Now, assume the left-hand side and let $a_1^*=a^*, a_2^*,\dots, a_n^*$ be all the conjugates of $a^*$ over $F^*$, where $a_k=(a_{k,i})_{i\in I}$ for each $k\leq n$.
Then by \L o\'s' Theorem we easily get that, for $\CU$-many $i$'s, $a_{1,i},\dots,a_{n,i}$ are all in $\dcl(a_{i})$, and 
$$\CG(F_i)\cdot a_i\subseteq \{a_{1,i},\dots,a_{n,i}\}.$$ It remains to show that for $\CU$-many $i$'s the elements $\{a_{1,i},\dots,a_{n,i}\}$ are conjugate over $F_i$. Indeed, if this is not the case, then, by the pigeonhole principle, there
is a proper nonempty subset $\{j_1,\dots,j_l\}$ of $\{1,\dots,n\}$ such that for $\CU$-many $i$'s the set $\{a_{j_1,i},\dots, a_{j_l,i}\}$ is definable over $F_i$, hence its canonical parameter $c_i:=\ulcorner\{a_{j_1,i},\dots, a_{j_l,i}\}\urcorner$ belongs to $F_i$. Then $(c_i)$ extends to a compatible sequence $c$, and $\ulcorner\{a^*_{j_1},\dots,a^*_{j_l}\}\urcorner=c^*\in F^*$, a contradiction to the assumption that $a_1^*=a^*, a_2^*,\dots, a_n^*$ are all conjugate over $F^*$.\\
%Suppose now that $|\CG(A^*)|>n$ (finite or infinite). Then there are is $\omega>m>n$ and pairwise distinct $b_1,\dots,b_m\in A^*$ such that $\{b_1,\dots,b_m\}=\CG(F^*)\cdot b_1$, with $b_j=(b_{j,i})/\CU$ for each $j\leq m$. Wlog $b_{1,i},\dots, b_{m,i}$ are pairwise distinct for every $i\in I$. We claim that for $\CU$-many $i$'s the elements $b_{1,i},\dots, b_{m,i}$ have the same type over $F_i$, which clearly will give a contradition to the assumption that $\deg(A_i/F_i)=n$. Indeed, if this were not the case, then, as $b_{i,1},\dots, b_{i,m}$ are algebraic over $F_i$, we could find by the pigeonhole principle a proper subet $\{i_1,\dots,i_k\}$ of $\{1,\dots,m\}$ such that for $\CU$-many $i$'s the set $b_{i_1,i},\dots,b_{i_k,i}$ is $F_i$-definable. Then $\pi_k(b_{i_1,i},\dots,b_{i_k,i})\in F_i$, so $\pi_k((b_{i_1,i})/\CU,\dots,(b_{i_k,i})/\CU\in F^*$, which contradicts the assumption that $\{b_1,\dots,b_m\}=\CG(F^*)\cdot b_1$.\\
(2) The inclusion $A^*\subseteq\prod_{\CU} A_i$ is a straightforward application of \L o\'s' Theorem. For the other inclusion, consider any $c^*\in \prod_{\CU} A_i$, where $c=(c_i)_{i\in I}$. There is some $D\in \CU$ such that $\dcl(F_i,a_i)=\dcl(F_i,a_i,c_i)$ is a finite Galois extension of $F_i$ of degree $n$ for each $i\in D$. By extending $c_i$'s we may assume that $\dcl(a_ic_i)$ contains all conjugates of $a_ic_i$ over $F_i$ for each $i\in D$. By (1) we get that $\dcl(F^*,a^*c^*)$ is a Galois extension of $F^*$ of degree $n$. As  $A^*$ is also a Galois extension of $F^*$ of degree $n$ contained in $\dcl(F^*,a^*c^*)$, we must have $A^*=\dcl(F^*,a^*c^*)$. In particular, $c^*\in A^*$.\\
(3) Checking that $\lim_{\CU}\sigma_i$ is well-defined and is a homomorphism is routine. Also, it is clear that $\lim_{\CU}\sigma_i^{-1}$ is an inverse to $\lim_{\CU}\sigma_i$, hence both of them are automorphisms of $A^*$ over $F^*$. 
For the 'moreover' part, notice that if $\tau$ is an automorphism of $A^*$ over $F^*$ sending $a^*$ to $c^*$, then (by the proof of (1)) for $\CU$-many $i$'s there is an automorphism $\tau_i$ of $A_i$ over $F_i$ sending $a_i$ to $c_i$, and $\tau=\lim_{\CU}\tau_i$.
\end{proof}

\begin{lemma}\label{lemma2031}
Let $\{E_i\;|\;i\in I\}$ and $\{F_i\;|\;i\in I\}$ be families of small definably closed substructures of $\BC$, and for each $i\in I$ let $\varphi_i:\CG(F_i)\to\CG(E_i)$ be a sorted isomorphism. We set $E^*:=\prod_{\CU}E_i$ and $F^*:=\prod_{\CU}F_i$.
Then, there is a sorted isomorphism of profinite groups $T:\CG(F^*)\to\CG(E^*)$.
\end{lemma}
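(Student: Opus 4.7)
My approach is to construct $T$ as an inverse limit of finite-level isomorphisms between Galois quotients, each produced by transporting the $\varphi_i$'s through the ultraproduct presentation of finite Galois extensions provided by Lemma \ref{products}. Fix an open normal subgroup $N^*$ of $\CG(F^*)$ and let $A^* := \acl(F^*)^{N^*}$, a finite Galois extension of $F^*$ of some degree $n$. Using Proposition \ref{thm:PET} and then replacing the primitive element by the canonical parameter of an ordered list of all its $F^*$-conjugates, we may assume $A^* = \dcl(F^*, a^*)$ for some $a^* \in S_J$ (in a fixed imaginary sort $J$) whose $\dcl$ contains all its $F^*$-conjugates. Writing $a^* = (a_i)/\CU$ and applying Lemma \ref{products}(1)--(2), we find that for $\CU$-many $i$ the set $A_i := \dcl(F_i, a_i)$ is a Galois extension of $F_i$ of degree $n$ with primitive element $a_i \in S_J$, and $A^* = \prod_\CU A_i$. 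Set $N_i := \CG(\acl(F_i)/A_i)$, $H_i := \varphi_i(N_i)$, and $B_i := \acl(E_i)^{H_i}$; then $\varphi_i$ induces an abstract isomorphism $\CG(A_i/F_i) \cong \CG(B_i/E_i)$ of finite groups of order $n$.

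The crux of the proof is to realize $B^* := \prod_\CU B_i$ as a finite Galois extension of $E^*$ of degree $n$ via Lemma \ref{products}(2); this requires choosing primitive elements of $B_i/E_i$ in a common sort whose $\dcl$ contains all $E_i$-conjugates. Here the sortedness hypothesis is essential: since $a_i \in S_J$ is primitive for $A_i/F_i$, the action of $\CG(A_i/F_i)$ on $S_J(A_i)$ is faithful, and sortedness of $\varphi_i^{-1}$ yields faithful action of $\CG(B_i/E_i)$ on $S_J(B_i)$, equivalent to the existence of a tuple $(b_{1,i}, \dots, b_{k_i,i}) \in S_J^{k_i}$ with $B_i = \dcl(E_i, b_{1,i}, \dots, b_{k_i,i})$ and $k_i \leq n$. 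Pigeonhole on $\{1, \dots, n\}$ fixes a single $k$ for $\CU$-many $i$. Enumerate $\CG(B_i/E_i) = \{\sigma_{1,i}, \dots, \sigma_{n,i}\}$ arbitrarily and let $b_i$ be the canonical parameter of the ordered tuple $(\sigma_{1,i}(b_{1,i}), \dots, \sigma_{n,i}(b_{k,i}))$ in the fixed imaginary sort $J^{kn}/\eta_{J^{kn}}$. A direct check gives $B_i = \dcl(E_i, b_i)$ and that $\dcl(b_i)$ contains all $E_i$-conjugates of $b_i$ (these being mere permutations of the coding tuple), so Lemma \ref{products}(1)--(2) deliver $B^* = \prod_\CU B_i$. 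Combining with Lemma \ref{products}(3), the ultraproduct of $\varphi_i|_{\CG(A_i/F_i)}$ then descends to an isomorphism $T_{N^*} : \CG(A^*/F^*) \cong \CG(B^*/E^*)$.

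Monotonicity of the $\varphi_i$ under inclusion of open normal subgroups together with a coherent choice of primitive elements for nested pairs yield compatibility of the $T_{N^*}$ along the inverse system, so they assemble into a continuous isomorphism $T : \CG(F^*) \to \CG(E^*)$. To verify that $T$ is sorted, let $H^* \in \CN(\CG(E^*))$ and $J$ be a sort with faithful action of $\CG(B^*/E^*)$ on $S_J(B^*)$; \L o\'s' theorem reduces this to faithful action of $\CG(B_i/E_i)$ on $S_J(B_i)$ for $\CU$-many $i$, sortedness of $\varphi_i$ transfers it to $A_i/F_i$, and a further application of \L o\'s' theorem returns faithful action on $S_J(A^*)$; the argument for $T^{-1}$ is symmetric. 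The principal obstacle in this proof is the primitive-element issue handled above: sortedness alone yields only faithfulness of the Galois action on a sort, not an actual primitive element in it, and the generating-tuple/pigeonhole/canonical-parameter detour is what manufactures a primitive element in a uniformly fixed imaginary sort, enabling the ultraproduct comparison of Galois extensions via Lemma \ref{products}.
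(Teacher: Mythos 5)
Your proposal is correct and follows essentially the same route as the paper: present a finite Galois extension $N$ of $F^*$ as an ultraproduct via Lemma~\ref{products}, transport $\CG(A_i/F_i)$ across $\varphi_i$ to $\CG(M_i/E_i)$, invoke sortedness to produce primitive elements on the $E_i$-side in a sort determined by the $F_i$-side data, take ultraproducts of the finite-level isomorphisms, and pass to the inverse limit. The only nontrivial divergence in detail is how you manufacture a single primitive element $b_i$ living in a fixed imaginary sort with $\dcl(b_i)$ closed under $E_i$-conjugation: you build it in one step via the orbit of the generating tuple under an enumeration of $\CG(B_i/E_i)$ (after pigeonholing on the number of generators $k$), whereas the paper first picks an $(n-1)$-tuple of elements of $S(M_i)$ moved by each nontrivial automorphism and only afterwards replaces $b^*$ by a tuple whose $\dcl$ contains its conjugates. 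Your variant is somewhat more explicit about satisfying the hypothesis of Lemma~\ref{products}(1), but the two constructions are interchangeable and the overall argument is the same.
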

\begin{proof}
%Put $\tau_i:=\varphi_i^{-1}$ for each $i\in I$.
Suppose $N=\dcl(F^*,a^*)$ is a Galois extension of $F^*$ of degree $n$, where 
$a=(a_i)_{i\in I}$. Wlog $\dcl(a^*)$ contains all conjugates of $a^*$ over $F^*$. 
Let $D_a\in \CU$ be the set of those $i\in I$ for which $F_i\subseteq \dcl(F_i,a_i)=:A_i$ is a Galois extension of degree $n$.
For each $i\in D_a$, define
$$M_i:=\acl(E_i)^{\varphi_i\big(\aut(\acl(F_i)/A_i) \big)},$$ and put $M:=\prod_{\CU}M_i$ (say $M_i=E_i$ for $i\notin D_a$).
Then, by Fact 3.20 in \cite{Hoff3}, for each $i\in D_a$  we have that $E_i\subseteq M_i$ is a Galois extension of degree $n$ and $\varphi_i$ induces an isomorphism $\varphi_i^{a_i}:\CG(A_i/F_i)\to \CG(M_i/E_i)$. 
\
\\
\textbf{Claim 1.}
a) There is a compatible sequence $b=(b_i)_{i\in I}$ such that $M_i=\dcl(E_i,b_i)$ for $\CU$-many 
$i$'s. Hence $M=\dcl(E^*,b^*)$ by Lemma \ref{products}.
%(call the set of such $i$'s $D_{a,b}$).
\\
b) Moreover, if $a^*$ is a tuple of elements of a sort $S$, then we can choose $b^*$ being a tuple from the sort $S$ as well.
(we will use part b) only in the final part of the proof of the theorem).
\
\\ Proof of Claim 1.
We will prove b), which in particular implies a). We may assume that $a_i\subseteq S$ for each $i\in I$. For any $i\in D_a$, let $H_i:=\aut(\acl(F_i)/A_i)$, an open subgroup of
$\CG(F_i)$ of index $n$.
As $a_i\subseteq S(A_i)$ and $A_i=\dcl(F_i,a_i)$, we clearly have  $$
|\aut_S(\acl(F_i)^{H_i}/F_i)|=|\aut_S(A_i/F_i)|=|\CG(A_i/F_i)|=|\CG(\acl(F_i)^{H_i}/F_i)|,$$ 
so, as $\varphi_i$ is a sorted isomorphism, we get that $$
|\aut_S(M_i/E_i)|=|\aut_S(\acl(E_i)^{\varphi_i(H_i)}/E_i)|=|\CG(\acl(E_i)^{\varphi_i(H_i)}/E_i)|=|\CG(M_i/E_i)|.$$ 
Thus the restriction homomorphism $\CG(M_i/E_i)\to \aut_S(M_i/E_i)$ has trivial kernel, so $M_i=\dcl(E_i,S(M_i))$. Hence we can choose $b_i$ to be an ($n-1$)-tuple of elements of $S(M_i)$ with $M_i=\dcl(E_i,b_i)$ (by picking, for each nontrivial $f\in \aut_S(M_i/E_i)$, one element of $S(M_i)$ moved by $f$). Then $b^*$ is an ($n-1$)-tuple of elements of $S(M)$.
Here ends the proof of Claim 1.

Now choose arbitrary $b^*=(b_i)/\CU\in M$ satisfying Claim 1a). We may assume $\dcl(b^*)$ contains all conjugates of $b^*$ over $E^*$. By Lemma \ref{products}(2), we have that $A^*=\prod_{\CU} A_i$ and $M=\prod_{\CU} M_i$. Thus, by Lemma \ref{products}(3), we can define $\varphi^{a}:\CG(N/F^{*})\to \CG(M/E^*)$ as follows. 
If $\sigma\in \CG(N/F^{*})$, then
$\sigma=\lim_{\CU}\sigma_i$ for some sequence of automorphisms $\sigma_i\in \CG(A_i/F_i)$ and we put $\varphi^{a}(\sigma)=\lim_{\CU}\varphi_i^{a_i}(\sigma_i)$. 
As the choice of the automorphisms $\sigma_i$ is unique up to a $\CU$-small set of indices, 
$\varphi^{a}$ is a well-defined map.
\
\\
\textbf{Claim 2.} Suppose $a'$ is such that $\dcl(F^*,a^*)=\dcl(F^*,a'^*)(=N)$. Then $\varphi^{a^*}=\varphi^{a'^*}$ (which hence depends only on $N$ and not on $a$, and will be denoted by $\varphi^N$). In particular, $M$ depends only on $N$ and not on $a$.
\
\\ Proof of Claim 2: Choose $\sigma=\lim_{\CU}\sigma_i\in \aut(N/F^{*})$.
For $\CU$-many $i$'s we have $\dcl(F_i,a_i')=\dcl(F_i,a_i)$, hence $\varphi_i^{a_i}=\varphi_i^{a'_i}$ (in particular, $\dom(\varphi_i^{a_i}(\sigma_i))=\dom(\varphi_i^{a_i'}(\sigma_i))$), thus $$\varphi^{a^*}(\sigma)=\lim_{\CU}\varphi_i^{a_i}(\sigma_i)=\lim_{\CU}\varphi_i^{a_i'}(\sigma_i)=\varphi^{a'^*}(\sigma).$$
Here ends the proof of Claim 2.

In an analogous way, using $\psi_i:=\varphi_i^{-1}$ in place of $\varphi_i$, we define $\psi_i^{b_i}$ and
$\psi^{b}: \CG(M/E^*)\to \CG(N/F^{*})$. This map, again, depends only on $M$ and not
on $b$, hence we call it $\psi^M$. Note that, for $a$ and $b$ as above, we have that $\psi_i^{b_i}=(\varphi_i^{a_i})^{-1}$ for $\CU$-many $i$'s.
\
\\
\textbf{Claim 3.} For $N$ and $M$ as above, $\psi^{M}$ is an inverse function of $\varphi^{N}$.
\
\\ Proof of Claim 3: 
Let $a$ and $b$ be as above. Choose any $\sigma=
\lim_{\CU}\sigma_i\in \CG(N/F^*)$. Then $$\psi^{M}\varphi^{N}(\sigma)=\psi^{b}\varphi^{a}(\sigma)=\psi^{b}(\lim_{\CU}\varphi_i^{a_i}(\sigma_i))=\lim_{\CU}\psi^{b_i}_i\varphi^{a_i}_i\sigma_i=\lim_{\CU}\sigma_i=\sigma,$$ so $\psi^{M}\varphi^{N}=id_{\CG(N/F^*)}$. Similarly one gets that $\varphi^{N}\psi^{M}=id_{\CG(M/E^*)}$. Here ends the proof of Claim 3.
\
\\
\textbf{Claim 4.} $\varphi^{N}$ is a group homomorphism.
\
\\ Proof of Claim 4: 
Take any $\sigma, \sigma'\in \CG(N/F^{*})$ with $\sigma=\lim_{\CU}\sigma_i$ and $\sigma'=\lim_{\CU}\sigma'_i$. Then
$$\varphi^N(\sigma\sigma')=\lim_{\CU}\varphi^{a_i}_i(\sigma_i\sigma_i')=
\lim_{\CU}\varphi^{a_i}_i(\sigma_i)\varphi^{a_i}_i(\sigma_i')=\lim_{\CU}\varphi^{a_i}_i(\sigma_i)\lim_{\CU}\varphi^{a_i}_i(\sigma_i')=\varphi^{N}(\sigma)\varphi^{N}(\sigma').$$ Here ends the proof of Claim 4.
\
\\ 
\textbf{Claim 5.} If $N'\supseteq N$ is another finite Galois extension of $F^*$,
%$\varphi^{N'}:\CG(N'/F^*)\to \CG(M'/E^*)$, 
then the following diagram commutes, 
%$$\varphi^N\pi_N=\pi_M\varphi^{N'},$$ 
where $\pi_N$ and $\pi_M$ are the restriction maps:
$$\xymatrix{\CG(N'/F^{*}) \ar[d]_{\pi_N} \ar[r]^-{\varphi^{N'}} & \CG(M'/E^{*}) \ar[d]^{\pi_M}\\
\CG(N/F^{*}) \ar[r]_-{\varphi^{N}} &\CG(M/E^{*})
}$$
\
\\ Proof of Claim 5:
We have that $N'=\dcl(F^{*},a^*a'^*)$ and $M'=\dcl(b^*b'^*)$ for some $a'=(a_i')_{i\in I}$ and $b'=(b_i')_{i\in I}$.
Take any $\sigma=\lim_{\CU}\sigma_i\in \CG(N'/F^{*})$. Then, as clearly taking ultralimit of automorphisms commutes with restriction, we have: 
\begin{IEEEeqnarray*}{rCl}
\pi_M\varphi^{N'}(\sigma) &=& \pi_M(\lim_{\CU}\varphi_i^{a_ia_i'}\sigma_i)=\lim_{\CU}\pi_M\varphi_i^{a_ia_i'}(\sigma_i) \\
&=^* & \lim_{\CU}\varphi_i^{a_i}((\sigma_i)_{|A_i})=\varphi^N(\lim_\CU (\sigma_i)_{|A_i}) =\varphi^N\pi_N(\sigma),
\end{IEEEeqnarray*}
where the equality '$=^*$' follows, as both $\varphi_i^{a_ia_i'}$ and $\varphi_i^{a_i}$ are induced by $\varphi_i$ and the restrictions  $\CG(F_i)\to\CG(\dcl(F_i,a_ia_i')/F_i)$ and 	$\CG(F_i)\to\CG(\dcl(F_i,a_i)/F_i)$, respectively, so $\varphi_i^{a_i}$ is induced by $\varphi_i^{a_ia_i'}$ and the restriction $\CG(\dcl(F_i,a_ia_i')/F_i)\to \CG(\dcl(F_i,a_i)/F_i)$.
 Here ends the proof of Claim 5.
 \
\\ The above claims show that the system $(\varphi^{N})$, with $N$ ranging over finite Galois extensions of $F^{*}$, induces an isomorphism $$T:=\varprojlim_N \varphi^N :\CG(F^*)\to\CG(E^*).$$
It remains to show that $T^{-1}$ and $T$ are sorted. 
\\
\textbf{Claim 6.} For any finite Galois extension $N$ of $F^*$, and $M$ defined as in the first paragraph (so $\varphi^{N}:\CG(N/F^*)\to \CG(M/E^*)$), we have: $$T(\CG(\acl(F^*)/N))=\CG(\acl(E^*)/M).$$
\
\\ Proof of Claim 6:
 If $\sigma\in \CG(\acl(F^*)/N)$, then $$(T(\sigma))_{|M}=\varphi^N(\sigma_{|N})=\varphi^N(id_N)=id_M,$$ which gives  $T(\CG(\acl(F^*)/N))\subseteq\CG(\acl(E^*)/M)$,
 and the other inclusion follows by symmetry.
 Here ends the proof of Claim 6.\
 \\
Consider any $H\in\CN(\CG(F^*))$ and put 
$N:=\acl(F^*)^{H}$. Then $N$ is a finite Galois extension of $F^*$. Consider any sort
$J$ such that $$|\aut_J(\acl(F^*)^{H}/F^*)|=|\CG(\acl(F^*)^H/F^*)|.$$ Then we can find a finite tuple $a^*=a/\CU$ of elements of $J(N)$ which is moved by any nontrivial element of $\aut_J(\acl(F^*)^{H}/F^*)$, hence by any nontrivial element of $\CG(\acl(F^*)^H/F^*)$ (as the restriction $\CG(\acl(F^*)^H/F^*)\to \aut_J(\acl(F^*)^{H}/F^*)$ has trivial kernel). Thus $N=\dcl(F^*,a^*)$. Define $M_i$ and $M$ as in the first paragraph of the proof. Then, by Claim 1b), there is $b^*\in \acl(E^*)$ such that $M=\dcl(E^*,b^*)$ and $b^*$ is a tuple of elements of 
$J(M)$. By Claim 6, $T(H)=\aut(\acl(E^*)/M)$, and hence
$$|\CG(\acl(E^*)^{T(H)}/E^*)|=|\CG(M/E^{*})|=|\aut_J(M/E^{*})|=|\aut_J(\acl(E^*)^{T(H)}/E^*)|,$$ where the second equality follows from the fact that $M=\dcl(E^*,b^*)$. We have proved that $T^{-1}$ is sorted, and it follows symmetrically that $T$ is sorted.
%By Lemma \ref{products}, $N=\dcl(N,a)=\prod=\prod_{\CU}\dcl(F_i,a_i)$ for some $a=(a_i)/\CU$, where for almost all $i$'s. 
%By Lemma \ref{prodcuts}, the set $D_0$ of indices $i\in I$ for which $F_i(b_i)$ is a Galois extension of $F_i$ of degree $n$ belongs to $\CU$.
% and $N=\acl(F^*)\cap\prod_{\CU}F_i(b_i)$
\end{proof}

\begin{comment}
\begin{lemma}[Lemma 20.3.1a) in \cite{FrJa}]\label{lemma2031}
Assume that $U:\mathcal{S}^{<\omega}\times\mathbb{N}\to \mathcal{S}^{<\omega}$.
Let $\{E_i\;|\;i\in I\}$ and $\{F_i\;|\;i\in I\}$ be families of small definably closed substructures of $\BC$, and for each $i\in I$ let $\varphi_i:\CG(F_i)\to\CG(E_i)$ be a $U$-sorted isomorphism. We set $E^*:=\prod_{\CU}E_i$ and $F^*:=\prod_{\CU}F_i$.
There is an isomorphism of profinite groups $\varphi:\CG(F^*)\to\CG(E^*)$.
Moreover,
\begin{enumerate}
\item if each $\varphi_i$ is an absolutely sort-preserving isomorphism, then $\varphi$ is also an absolutely sort-preserving isomorphism,
\item if each $\varphi_i$ is a sorted isomorphism, then $\varphi$ is also a sorted isomorphism,
\end{enumerate}
\end{lemma}

\end{comment}
\begin{remark}\label{remark:weakly.sorted.big.lemma}
A special case of the above lemma:
if for each $i\in I$ we have $E_i=E=\dcl(E)$, $F_i=F=\dcl(F)$ and $\varphi_i=\varphi$ is a sorted isomorphism, then there exists a sorted isomorphism $T:\CG(F^{\CU})\to\CG(E^{\CU})$.
\end{remark}

In the appendix, we will see (by Remark \ref{finite_U} and Proposition \ref{weakly_prop}) that the assumption that $\varphi_i$'s are sorted can be omitted in Lemma \ref{lemma2031} in the case of a finitely sorted language. However, in the case of an infinitely sorted language, Example \ref{ex2031} below shows that the sortedness assumption cannot be omitted even in the situation of Remark \ref{remark:weakly.sorted.big.lemma}.

\begin{remark}\label{rem_fin}
Assume $\mathfrak{C}$ is arbitrary, possibly without EI. Let $F$ be any substructure of $\FC$. Then $\CG(F)$ computed in $\mathfrak{C}$ is the same as computed in $\mathfrak{C}$ expanded by sorts for finite sets of compatible finite tuples (call the latter $\mathfrak{C}^{fin}$).
\end{remark}
\begin{proof}
Suppose $\sigma\in \aut(\mathfrak{C}^{fin}/F)$ is such that $\sigma_{|\acl^{\FC}(F)}=id_{\acl^\FC(F)}$. Then, for any $a=\ulcorner\{a_1,\dots,a_n\} \urcorner\in \acl^{\mathfrak{C}^{fin}}(F)$ we have that   $a_1,\dots, a_n\in \acl^\mathfrak{C}(F)$, hence $\sigma(a_i)=a_i$ and $\sigma(a)=a$. Thus, 
 the restriction $$\CG^{\mathfrak{C}^{fin}}(F)\to \CG^{\mathfrak{C}}(F)$$ has trivial kernel, and hence is an isomorphism.
\end{proof}

\begin{example}\label{ex2031}
Let $\mathfrak{C}$ be a structure with sorts $S_k$ and  $R_k$, $k\in \omega$, each $S_k$ equipped with an equivalence relation $E_k$ with all classes of cardinallity $2$ and with infinitely many classes, and with the projection $$\pi_k:S_k\to S_k/E_k=:R_k.$$
Choose pairwise distinct $a_1,a_2,\dots\in R_0$ and any $b_k\in
R_k$ for any $k>0$. Put $F=\{a_i:i>0\}$ and $E=\{b_i:i>0\}$.
Clearly $\acl(F)=\bigcup_{i>0}( \{a_i\}\cup \pi_0^{-1}(a_i))$ and  $\acl(E)=\bigcup_{i>0} (\{b_i\}\cup \pi_i^{-1}(b_i))$, and thus $$\CG(E)\cong (\mathbb{Z}/2\mathbb{Z})^\omega\cong \CG(F),$$ as each class $a_i$ and $b_i$ can be permutted by an automorphism independently.

Now let $I$ be an infinite set, and let $\CU$ be an ultrafilter on $I$. Then we have $$\CG(E^\CU)=\CG(\{\alpha(b_i):i>0\})\cong (\mathbb{Z}/2\mathbb{Z})^{\omega},$$
where $\alpha:\FC\to \FC^\CU$ is the diagonal map.
On the other hand, as $F$, and hence $F^\CU$, is contained in $R_0$, we easily get that $$\CG(F^\CU)=(\mathbb{Z}/2\mathbb{Z})^{|F^\CU|}.$$

If we choose $\CU$ to be regular, which we can do by Proposition 9.2(2) from \cite{ultraKeisler} (as $F$ is infinite), then, by Theorem 9.3 from \cite{ultraKeisler} we get that $|\CG(F^\CU)|=\omega^{|I|}$, so $\CG(F^\CU)\ncong \CG(E^\CU)$ if $|I|>2^\omega$. 
%Alternatively we can choose $\CU$ using what Daniel has digged up this morning in Chang-Keisler.

Finally, by Remark \ref{rem_fin} above, we get that $\CG(F^\CU)$ and $\CG(E^\CU)$ remain unchanged if we pass to $\FC^{fin}$. On the other hand, it is routine to check that $\FC^{fin}$ satisfies the assumptions of Theorem 3.1 from \cite{Jo}, hence $\FC^{fin}$ has EI. Consequently, $\CG(F^\CU)\ncong \CG(E^\CU)$ in $(\FC^{eq})^m$, while all assumptions of Remark \ref{remark:weakly.sorted.big.lemma} except sortedness are satisfied for $\dcl^{(\FC^{eq})^m}(F)$ and 
$\dcl^{(\FC^{eq})^m}(E)$. This shows that the assumption of sortedness cannot be ommited in Remark \ref{remark:weakly.sorted.big.lemma}.
\end{example}

%Consider $\mathfrak{C}=(S_0,S_1,\dots)$ where each sort $S_i$ is a model of ACF$_p$ (with $p$ fixed) and with no interaction between the sorts. For each $i\in \omega\backslash\{0\}=:I$, let $E_i=(K,L,L,L,\dots)$, where $L$ is algebraically closed but $K$ is not, and let $F_i=(L,L,\dots,L,K,L,L,\dots)$, where $K$ occurs in the sort $S_i$. Then $\CG(E_i)\cong\CG(K)\cong\CG(F_i)$ for each $i\in I$ but $\CG(\prod_\CU E_i	)\cong \CG(K) \ncong \{e\}\cong \CG(\prod_\CU F_i)$. \end{remark}

%\begin{example}\label{example_Z}
%Consider $\mathfrak{C}=\bigcup_{i\in \mathbb{Z}}S_i$ where each sort $S_i$ is a saturated model of ACF$_0$. Let $K$ be a substructure such that $S_i(K)$ is the field of real numbers for each $i$. For each $k\in \mathbb{Z}$ let $\varphi_k\in  \CG(K)\cong (\mathbb{Z}/2\mathbb{Z})^{\mathbb{Z}}$ be induced by the shift $i\mapsto i+k$ on $\mathbb{Z}$. Then the construction of $T$ from Lemma \ref{lemma2031} does not apply to $(\varphi_i)_{i\in \mathbb{Z}}$.
%\end{example}

Assume that $F$ and $E$ are small definably closed substructures of $\mathfrak{C}$ such that $E,F\subseteq\mathbb{C}\preceq\mathfrak{C}$ ($\mathbb{C}$ was already chosen, see the three paragraphs preceding Remark \ref{remark:acl_prod_F}). Now, we assume that $\mathbb{C}^{\CU}$ is embedded in $\mathfrak{C}$. Note that the ``diagonal map"
$$\alpha:\mathbb{C}\to\mathbb{C}^{\CU},\qquad\alpha(c)=(c_i)/\CU,$$
where $c_i=c$ for every $i\in I$, extends to an automorphism of $\mathfrak{C}$, which will be also denoted by $\alpha$. Since $T$ has quantifier elimination, we may abuse notation and introduce a ``scheme of maps" $\alpha^{\#}$ defined as follows
$$\alpha^{\#}(\sigma)=\big(\alpha\circ\sigma\circ\alpha^{-1}\big)|_{\alpha(A)},$$ 
where $\sigma$ is an automorphism of a small substructure $A$ of $\mathfrak{C}$. 
%- sometimes $\alpha^{\#}$ will be well-defined.

Assume that $\varphi:\CG(F)\to\CG(E)$ is a sorted isomorphism.
Consider $T:\CG(F^{\CU})\to\CG(E^{\CU})$ given by the proof of Lemma \ref{lemma2031} for $(\varphi_i)_{i\in I}$, where $\varphi_i=\varphi$ for each $i\in I$ (see Remark \ref{remark:weakly.sorted.big.lemma}).

\begin{lemma}\label{lemma:T_commutes}
The following diagram commutes
$$\xymatrix{
\CG(F^{\CU}) \ar[rrr]^-{T} \ar[d]_{\res}& & & \CG(E^{\CU}) \ar[d]^{\res} \\
\CG\big(\alpha(F)\big) \ar[r]_-{(\alpha^{\#})^{-1}}& \CG(F) \ar[r]_-{\varphi}& \CG(E) \ar[r]_-{\alpha^{\#}}& \CG\big(\alpha(E)\big)
}$$
\end{lemma}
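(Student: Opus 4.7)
The plan is to reduce the commutativity to agreement on each finite Galois extension of $\alpha(F)$ (and the corresponding one of $\alpha(E)$) inside the algebraic closure, and then trace through the explicit description $T = \varprojlim_N \varphi^N$ from the proof of Lemma \ref{lemma2031}. Fix $\sigma \in \CG(F^{\CU})$ and set $\sigma_0 := \sigma|_{\acl(\alpha(F))}$ and $\tau_0 := (\alpha^{\#})^{-1}(\sigma_0) \in \CG(F)$. Since $\alpha$ is an automorphism of $\FC$, every finite Galois extension of $\alpha(E)$ inside $\acl(\alpha(E))$ has the form $\alpha(M_0)$ for a finite Galois extension $M_0 \subseteq \acl(E)$ of $E$, so it suffices to show $T(\sigma)|_{\alpha(M_0)} = \alpha^{\#}(\varphi(\tau_0))|_{\alpha(M_0)}$ for each such $M_0$.

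Given $M_0$, let $N_0 := \acl(F)^{\varphi^{-1}(\aut(\acl(E)/M_0))}$ be the corresponding finite Galois extension of $F$, with $\varphi$ inducing an isomorphism $\CG(N_0/F) \xrightarrow{\sim} \CG(M_0/E)$. By Propositions \ref{thm:finGal_fingen} and \ref{thm:PET}, pick a primitive element $a_0$ with $N_0 = \dcl(F, a_0)$ and, by enlarging $a_0$ if necessary, with $\dcl(a_0)$ containing all conjugates of $a_0$ over $F$. Applied to the constant sequences $F_i = F$, $a_i = a_0$, Lemma \ref{products}(1) and (2) identifies $N := \dcl(F^{\CU}, \alpha(a_0))$ with $(N_0)^{\CU}$, a finite Galois extension of $F^{\CU}$ of degree $[N_0:F]$, and Lemma \ref{products}(3) gives $\sigma|_N = \lim_{\CU}\sigma_i$ for some $\sigma_i \in \CG(N_0/F)$.

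The key step, which I expect to be the only non-routine part, is to show that $\sigma_i|_{N_0} = \tau_0|_{N_0}$ for $\CU$-many $i$. Indeed, let $a_0 = a^{(1)}, \ldots, a^{(m)}$ be the conjugates of $a_0$ over $F$; then each $\sigma_i(a_0) \in \{a^{(1)}, \ldots, a^{(m)}\}$, so by the pigeonhole principle there is some $k$ with $\sigma_i(a_0) = a^{(k)}$ for $\CU$-many $i$. But $\sigma(\alpha(a_0)) = (\sigma_i(a_0))/\CU = \alpha(a^{(k)})$, while by definition $\sigma_0(\alpha(a_0)) = \alpha(\tau_0(a_0))$; hence $\tau_0(a_0) = a^{(k)}$, and since $N_0 = \dcl(F, a_0)$, both $\tau_0|_{N_0}$ and $\sigma_i|_{N_0}$ are determined by their values at $a_0$, yielding $\sigma_i|_{N_0} = \tau_0|_{N_0}$ for $\CU$-many $i$.

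The remaining verification is routine: by the construction of $\varphi^N$ in Lemma \ref{lemma2031}, $T(\sigma)|_M = \varphi^N(\sigma|_N) = \lim_{\CU} \varphi^{a_0}(\sigma_i) = \lim_{\CU} \varphi(\tau_0)|_{M_0}$, and for any $c \in M_0$, $T(\sigma)(\alpha(c)) = \alpha(\varphi(\tau_0)(c)) = \alpha^{\#}(\varphi(\tau_0))(\alpha(c))$. Letting $M_0$ range over all finite Galois extensions of $E$ inside $\acl(E)$, we obtain $T(\sigma)|_{\acl(\alpha(E))} = \alpha^{\#}(\varphi(\tau_0)) = \alpha^{\#} \circ \varphi \circ (\alpha^{\#})^{-1}(\sigma_0)$, which is the commutativity of the diagram.
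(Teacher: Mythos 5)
Your proof is correct and takes essentially the same approach as the paper: both reduce commutativity to finite Galois extensions via Lemma \ref{products} and the construction of $\varphi^N$ in Lemma \ref{lemma2031}, and both exploit the fact that the ultralimit defining $\sigma$ on the finite quotient may be taken constant (your pigeonhole step is the same observation the paper makes when it passes to $\sigma = \lim_{\CU}\sigma_0$). The paper displays an explicit twelve-object commuting diagram to carry out the reduction, whereas you replace that diagram-chase by directly identifying $\sigma_i|_{N_0}$ with $\tau_0|_{N_0}$, but this is a difference of presentation, not of substance.
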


\begin{proof}
It is enough to show that for any $\sigma\in\CG(F^{\CU})$ and any $b\in\acl(E)$ we have
$$\Big(\alpha^{-1}\circ \big(T(\sigma)|_{\acl(\alpha(E))}\big)\circ\alpha\Big)(b)
= \varphi\big(\alpha^{-1}\circ
\sigma|_{\acl(\alpha(F))}\circ\alpha \big)(b).$$
We may assume $\dcl(b)$ contains all conjugates of $b$ over $F$. Put $$M:=\dcl(F^{\CU},\alpha(b))=\prod_\CU \dcl(E,b),$$ where the second equality follows from Lemma \ref{products}(1). 
Let $a$ be such that $$\dcl(F,a)=\acl(F)^{\varphi^{-1}[\CG(\acl(E)/\dcl(E,b))]},$$ and $\dcl(a)$ contains all conjugates of $a$ over $F$.
By Claim 6 in the proof of Lemma \ref{lemma2031} and by Lemma \ref{products}(1) again, we get that $$N:=\acl(E^\CU)^{T^{-1}[\CG(\acl(E^\CU)/M)]}=(\dcl(F,a))^\CU=
\dcl(F^\CU,\alpha(a)).$$

Using the notation from the proof of Lemma \ref{lemma2031} with $\varphi_i=\varphi$ for each $i\in I$, we have that $\varphi^N$ is induced by $T$ and the restrictions $\CG(F^\CU)\to \CG(\dcl(F^\CU,\alpha(a))/F^{\CU})$ and $\CG(E^\CU)\to \CG(\dcl(E^\CU,\alpha(b))/E^{\CU})$.
Also, the other five maps from the diagram in the statement of the lemma can be arranged as the upper row in the following commuting diagram:
\begin{center}
\resizebox{12.5cm}{!}{
$$\xymatrix{
\CG(F^\CU) \ar[r]^-{\res} \ar[d]_{\res} & \CG(\alpha(F)) \ar[r]^-{(\alpha^{\#})^{-1}} \ar[d]_{\res} & \CG(F) \ar[r]^-{\varphi} \ar[d]^-{\res}  & \CG(E)  \ar[r]^-{\alpha^\#} \ar[d]_{\res} & \CG(\alpha(E)) \ar[d]^-{\res} 	& \CG(E^\CU) \ar[l]^-{res} \ar[d]^-{\res}\\
\CG(\alpha(a)/F^\CU ) \ar[r]_-{res} & \CG(\alpha(a)/\alpha(F)) \ar[r]_-{(\alpha^\#)^{-1}} & \CG(a/F) \ar[r]_-{\varphi^a}
&\CG(b/E) \ar[r]_-{\alpha^{\#}} & \CG(\alpha(b)/\alpha(E))
& \CG(\alpha(b)/E^\CU) \ar[l]_-{\res}}.$$}
\end{center}
Thus, replacing the maps from the upper row by the maps from the lower row, and $T$ by $\varphi^N$, we get that the desired equality is equivalent to: $$\Big(\alpha^{-1}\circ \big(\varphi^N(\sigma)|_{\dcl(\alpha(E),\alpha(b))}\big)\circ\alpha\Big)(b)
= \varphi^a\big(\alpha^{-1}\circ
\sigma|_{\dcl(\alpha(F),\alpha(a))}\circ\alpha \big)(b) .$$
We will compute the left-hand side first. By Lemma \ref{products}(3), $\sigma=\lim_\CU \sigma_i$ for some $\sigma_i\in \CG(a/F)$. As $\CG(a/F)$ is finite, we may assume there is some $\sigma_0\in \CG(a/F)$ such that $\sigma_i=\sigma_0$ for each $i\in I$. Put $\rho:=\varphi^a(\sigma_0)$. Then, by the definition of $\varphi^N$ in Lemma \ref{lemma2031}, we have $\varphi^N(\sigma)=\lim_\CU \rho$. Thus $\varphi^N(\sigma)$ sends $\alpha(b)$ to $\alpha(\rho(b))$, and so does $\varphi^N(\sigma)|_{\dcl(\alpha(E),\alpha(b))}$. This implies that $$ \Big(\alpha^{-1}\circ \big(\varphi^N(\sigma)|_{\dcl(\alpha(E),\alpha(b))}\big)\circ\alpha\Big)(b)=\rho(b).$$
On the other hand, $\sigma=\lim_\CU \sigma_0$ sends $\alpha(a)$ to
$\alpha(\sigma_0(a))$, and so does its restriction to $\dcl(\alpha(F),\alpha(a))$. Hence, the function $\alpha^{-1}\circ
\sigma|_{\dcl(\alpha(F),\alpha(a))}\circ\alpha$ sends $a$ to $\sigma_0(a)$, and so is equal to $\sigma_0$. Thus $$\varphi^a\big(\alpha^{-1}\circ
\sigma|_{\dcl(\alpha(F),\alpha(a))}\circ\alpha \big)(b)=(\varphi^a(\sigma_0))(b)=\rho(b).$$
\end{proof}

\begin{remark}
Assume that, for each $i\in I$, $K_i$ and $F_i$ are small substructures of $\mathfrak{C}$ such that $\dcl(K_i)=K_i$ and $K_i\subseteq F_i$ is regular. Then $\prod K_i/\CU\subseteq\prod F_i/\CU$ is regular.
\end{remark}

Although
regularity is preserved under taking ultraproducts,
it is not clear that being a PAC substructure is. Therefore, in the next result, we assume that ``PAC is a first order property". 
The following theorem is the second main theorem of this paper and generalizes (the perfect case of) Theorem 20.3.3 from \cite{FrJa}. Now, we assume \textbf{stability} of the theory $T$ again.

\begin{theorem}[Elementary Equivalence Theorem for Structures - EETS]\label{thm:elementary_invariance}
Suppose PAC is a first order property.
%and pure [or strict] saturation over $P$ is a first order property. 
Assume that
\begin{itemize}
\item $K$, $L$, $M$, $E$, $F$ are small definably closed substructures of $\mathfrak{C}$,
\item $K\subseteq L\subseteq E$, $K\subseteq M\subseteq F$,
%\item $F$ and $E$ are $\kappa^{+}$-saturated (\textbf{for qf-types?}), where $\kappa\geqslant\max(|L|,|M|,|T|)$,
\item $F$ and $E$ are PAC,
\item $\Phi_0\in\aut(\mathfrak{C}/K)$ is such that $\Phi_0(L)=M$,
\item $\varphi:\mathcal{G}(F)\to\mathcal{G}(E)$ is a \textbf{sorted isomorphism} such that
$$\xymatrix{
\mathcal{G}(F)\ar[r]^{\varphi} \ar[d]_{\res} & \mathcal{G}(E) \ar[d]^{\res}\\
\mathcal{G}(M)\ar[r]_{\varphi_0} & \mathcal{G}(L)
}$$
where $\varphi_0(\sigma):=\Phi_0^{-1}\circ \sigma\circ\Phi_0$, commutes.
\end{itemize}
Then $E\equiv_K F$.
\end{theorem}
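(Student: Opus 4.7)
The plan is to reduce to the saturated case handled by Proposition \ref{lemma2023}, by lifting everything to sufficiently saturated ultrapowers $F^{\CU}$ and $E^{\CU}$. First, choose an index set $I$ and a regular ultrafilter $\CU$ on $I$ so that both $F^{\CU}$ and $E^{\CU}$ are $\kappa$-saturated for some $\kappa\geqslant(|L|+|M|+|T|)^{+}$. Since PAC is a first order property, both ultrapowers are PAC, and hence $\kappa$-PAC by Remark \ref{rem:PAC.to.kPAC}. Embed $\mathbb{C}^{\CU}$ into $\mathfrak{C}$ and extend the diagonal embedding to an automorphism $\alpha\in\aut(\mathfrak{C})$, as in the discussion preceding Lemma \ref{lemma:T_commutes}; in particular $\alpha(F)\preceq F^{\CU}$ and $\alpha(E)\preceq E^{\CU}$.

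Next, apply Lemma \ref{lemma2031} in the form of Remark \ref{remark:weakly.sorted.big.lemma} (with $\varphi_i=\varphi$ for every $i\in I$) to the sorted isomorphism $\varphi$, producing a sorted isomorphism $T:\CG(F^{\CU})\to\CG(E^{\CU})$. Set $\Phi_{0}^{*}:=\alpha\circ\Phi_{0}\circ\alpha^{-1}\in\aut(\mathfrak{C}/\alpha(K))$, so that $\Phi_{0}^{*}(\alpha(L))=\alpha(M)$, and define $\varphi_{0}^{*}(\sigma):=(\Phi_{0}^{*})^{-1}\circ\sigma\circ\Phi_{0}^{*}$ for $\sigma\in\CG(\alpha(M))$.

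The central step is then to verify that the square
$$\xymatrix{
\CG(E^{\CU})\ar[d]_{\res} & \CG(F^{\CU})\ar[l]_{T}\ar[d]^{\res}\\
\CG(\alpha(L)) & \CG(\alpha(M))\ar[l]^{\varphi_{0}^{*}}
}$$
commutes. This is exactly where Lemma \ref{lemma:T_commutes} is designed to be used: that lemma asserts that the restriction of $T$ to $\CG(\alpha(F))\to\CG(\alpha(E))$ is $\alpha^{\#}\circ\varphi\circ(\alpha^{\#})^{-1}$, and further restricting to $\CG(\alpha(M))\to\CG(\alpha(L))$ yields $\alpha^{\#}\circ\varphi_{0}\circ(\alpha^{\#})^{-1}$ by the hypothesized commuting diagram for $\varphi$ and $\varphi_{0}$. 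A direct computation then shows $\alpha^{\#}\circ\varphi_{0}\circ(\alpha^{\#})^{-1}=\varphi_{0}^{*}$, giving the required commutativity.

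Finally, apply Proposition \ref{lemma2023} with the parameters $(\alpha(K),\alpha(L),\alpha(M),E^{\CU},F^{\CU},\Phi_{0}^{*},T)$ to conclude $E^{\CU}\equiv_{\alpha(K)}F^{\CU}$. Since $\alpha(E)\preceq E^{\CU}$ and $\alpha(F)\preceq F^{\CU}$, this gives $\alpha(E)\equiv_{\alpha(K)}\alpha(F)$, and transferring through $\alpha\in\aut(\mathfrak{C})$ yields $E\equiv_{K}F$. The main obstacle is the verification of the commuting square required by Proposition \ref{lemma2023}; this is precisely why the sortedness assumption on $\varphi$ was made (so that Lemma \ref{lemma2031} produces a well-defined $T$) and why Lemma \ref{lemma:T_commutes} was proved. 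A secondary but essentially routine technical matter is the choice of $\CU$ guaranteeing both $\kappa$-saturation of the ultrapowers and that ``PAC" transfers to them.
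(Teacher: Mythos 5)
Your proposal is correct and matches the paper's proof essentially step by step: choose a regular ultrafilter so that the ultrapowers are sufficiently saturated, use the first-order axiomatizability of PAC and Remark \ref{rem:PAC.to.kPAC} to conclude they are $\kappa$-PAC, lift $\varphi$ to $T:\CG(F^{\CU})\to\CG(E^{\CU})$ via Lemma \ref{lemma2031} (through Remark \ref{remark:weakly.sorted.big.lemma}), verify the required commuting square via Lemma \ref{lemma:T_commutes} together with the hypothesized diagram and the conjugation identity for $\alpha^{\#}$, then invoke Proposition \ref{lemma2023} and pull back through the elementary embeddings $\alpha(E)\preceq E^{\CU}$, $\alpha(F)\preceq F^{\CU}$. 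The computation $\alpha^{\#}\circ\varphi_0\circ(\alpha^{\#})^{-1}=\varphi_0^{*}$ you carry out is exactly what the paper's final diagram with the bent arrow records implicitly, so there is no gap and no deviation from the intended route.
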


\begin{proof}
%\textbf{PAC and saturation of $F^{\CU}$ and $E^{\CU}$...TO DO!}
By standard facts (e.g. Theorem 6.1.4 and Theorem 6.1.8 from \cite{ChangKeisler}), we can choose $I$ and $\CU$ such that $N^{\CU}$ will be $\kappa$-saturated for any $\mathcal{L}$-structure $N$, where
$\kappa>(|M|+|L|+|T|)^+$, so we do it.
Since PAC is a first order property, it follows that $F^{\CU}$ and $E^{\CU}$ are PAC, hence $\kappa$-PAC.

By Lemma \ref{lemma2031} and Lemma \ref{lemma:T_commutes} we obtain the following commuting diagram
$$\xymatrix{
\CG(F^{\CU}) \ar[rrr]^-{T} \ar[d]_{\res}& & & \CG(E^{\CU}) \ar[d]^{\res} \\
\CG\big(\alpha(F)\big) \ar[d]_{\res} \ar[r]^-{(\alpha^{\#})^{-1}}& \CG(F) \ar[r]^-{\varphi}\ar[d]_{\res}& \CG(E) \ar[d]^{\res}\ar[r]^-{\alpha^{\#}}& \CG\big(\alpha(E)\big) \ar[d]^{\res}\\
\CG\big(\alpha(M)\big) \ar@/_2.5pc/[rrr]_{\sigma\mapsto(\alpha\Phi_0\alpha^{-1})^{-1}\circ\sigma\circ(\alpha\Phi_0\alpha^{-1})} \ar[r]_-{(\alpha^{\#})^{-1}}& \CG(M)\ar[r]_{\varphi_0} & \CG(L) \ar[r]_-{\alpha^{\#}}& \CG\big(\alpha(L)\big)
}$$
Note that $\alpha\Phi_0\alpha^{-1}\in\aut(\mathfrak{C}/\alpha(K))$. By Proposition \ref{lemma2023}, we obtain that $F^{\CU}\equiv_{\alpha(K)}E^{\CU}$. Since $\alpha(F)\preceq F^{\CU}$ and $\alpha(E)\preceq E^{\CU}$, it follows that 	$\alpha(F)\equiv_{\alpha(K)}\alpha(E)$ and so $F\equiv_K E$.
\end{proof}

\begin{remark}
Let us note here that the above theorem remains true if we understand ``PAC" and ``PAC is a first order property" as they are defined in \cite{PilPol}.
We only need to use Definition 3.1, Definition 3.3 from \cite{PilPol}, and note the fact that
$$(\mathbb{C},F)\cong\big(\alpha(\mathbb{C}),\alpha(F)\big)\preceq(\mathbb{C},F)^{\CU}=(\mathbb{C}^{\CU},F^{\CU})$$
and the analogous fact for $E^{\CU}$.
\end{remark}

\begin{cor}\label{final_cor}
Suppose PAC is a first order property.
If the restriction map $\res:\CG(F)\to\CG(E)$, where $E\subseteq F$ are PAC structures, is a  sorted isomorphism, then $E\preceq F$. (Compare with Remark \ref{rem:bounded_sort_preserving})
\end{cor}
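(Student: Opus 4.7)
The plan is to deduce this as a direct application of Theorem \ref{thm:elementary_invariance} with the specialization $K = L = M = E$, $\Phi_0 = \id_{\mathfrak{C}}$, and $\varphi = \res$. Since $E$ is PAC, it is in particular definably closed, and so the choice of substructures $K, L \subseteq E$ and $K, M \subseteq F$ is valid. The map $\varphi_0$ induced by $\Phi_0 = \id$ is the identity on $\CG(E)$, so the required diagram
$$\xymatrix{
\CG(F)\ar[r]^{\res} \ar[d]_{\res} & \CG(E) \ar[d]^{\res}\\
\CG(E)\ar[r]_{\id} & \CG(E)
}$$
commutes tautologically. The hypothesis that $\varphi = \res$ is a sorted isomorphism is exactly what is given.

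Applying the theorem yields $F \equiv_E E$. Since $E \subseteq F$ by assumption, this is precisely the statement that $E \preceq F$ via the Tarski-Vaught criterion (every $\mathcal{L}$-formula with parameters from $E$ holds in $E$ iff it holds in $F$).

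There is no real obstacle here; the whole content is packaged into Theorem \ref{thm:elementary_invariance}, and the corollary simply reads off the particular instance where the ambient ``base'' isomorphism is the identity on $E$ and both sides coincide. The only minor point worth flagging in the write-up is to remark explicitly why $E \equiv_E F$ together with $E \subseteq F$ upgrades to $E \preceq F$.
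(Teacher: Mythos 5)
Your proposal is correct and is the obvious (and surely intended) specialization: setting $K=L=M=E$, $\Phi_0=\id_{\mathfrak{C}}$, and $\varphi=\res$ makes the hypotheses of Theorem \ref{thm:elementary_invariance} hold trivially (the commuting square degenerates, since the right vertical map is then the identity $\CG(E)\to\CG(E)$ and $\varphi_0=\id$), and the conclusion $E\equiv_E F$ together with $E\subseteq F$ is exactly the Tarski--Vaught statement $E\preceq F$. The paper leaves the proof to the reader, and what you wrote is the expected argument; your flagged point about the upgrade from $E\equiv_E F$ to $E\preceq F$ is worth keeping in a write-up.
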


\begin{appendix}

\section{Sort-preserving isomorphisms of Galois groups of multi-sorted strucutres}
In this appendix, we introduce several variations of the notion of a sorted isomorphism of Galois groups of multi-sorted structures, and we compare them with each other. Then, among other observations, we note that the main results of Section \ref{sec:non_saturated_PAC} can be strengthened by replacing the sortedness assumption by weak sortedness.

%We end this section with Proposition \ref{prop:bounded_sort_preserving}, which shows that being an \emph{absolutely sort-preserving isomorphism} is not uncommon, at least if the absolute Galois groups are not too big.

It seems to us that being an \emph{absolutely sort-preserving isomorphism} (Definition \ref{def:sorted_map1} below) is a natural notion in the realm of many sorted structures. Recall that an isomorphism of profinite groups is an inverse limit of isomorphisms of finite quotients of these groups, thus it is natural to require some model-theoretic behavior on the level of each finite quotient. 

In \cite{cherlindriesmacintyre}, authors introduce a notion of \emph{complete inverse systems} for Galois groups attached to fields. Being an absolutely sort-preserving isomorphism is related to being an isomorphism of ``sorted" complete inverse systems, so to an isomorphism of complete inverse systems where sorts are named. We will study this concept in our future research (\cite{HL1}).

\begin{notation}
Let $\mathcal{S}$ be a set of sorts.
\begin{enumerate}
	\item Let $\mathcal{S}^{<\omega}$ be the set of finite tuples of elements in $\mathcal{S}$.
	\item For $J,J'\in \mathcal{S}^{<\omega}$, we write $J\leqslant J'$ if $J$ is a subtuple of $J'$
	(i.e. if $J'=(S_{1},\ldots,S_{n})$, then any $J=(S_{{s_1}},\ldots,S_{{s_{n'}}})$, where $n'\leqslant n$ and $1\leqslant s_1<\ldots<s_{n'}\leqslant n$, is a subtuple of $J'$).
	\item For $J,J'\in \mathcal{S}^{<\omega}$, we write $J^\smallfrown J'$ for the concatenation of $J$ and $J'$.
	\item For $J=(S_1,\ldots,S_n)\in \mathcal{S}^{<\omega}$, set $|J|=n$.
	\item For $J=(S_1,\ldots,S_n)\in \mathcal{S}^{<\omega}$ and a permutation $\sigma\in \mathcal{S}_n$, $\sigma(J)=(S_{\sigma(1)},\ldots,S_{\sigma(n)})$.
	\item For $J=(S_1,\ldots,S_n)\in I^{<\omega}$, we write $S_J=S_{1}\times\cdots\times S_{n}$.

%Moved to Section 'the case of...': \item In this notation, $\aut_{J}(L/K)$ is the image of the restriction map $\CG(L/K)\to\aut(S_J(L)/S_J(K))$, where $K\subseteq L$ is an extension of small substructures of $\FC$.
\end{enumerate}
\end{notation}

\begin{definition}\label{def:sorted_map1}
Assume that $F$ and $E$ are small substructures of $\mathfrak{C}$ and $\pi:\CG(F)\to\CG(E)$ is a continuous epimorphism. 
We say that $\pi$ is \emph{absolutely sort-preserving} if for each
$N\in\CN(\CG(E))$, each $J\in \mathcal{S}^{<\omega}$ and every $\sigma\in\CG(\acl(F)^{\pi^{-1}[N]}/F)$ we have that
$$\sigma|_{S_J}=\id_{S_J}\qquad\Rightarrow\qquad \pi_N(\sigma)|_{S_J}=\id_{S_J},$$
where $\pi_N:\CG(\acl(F)^{\pi^{-1}[N]}/F)\to\CG(\acl(E)^{N}/E)$ is the induced homomorphism of profinite groups.

We say that $\pi$ is an absolutely sort-preserving isomorphism if $\pi$ is an isomorphism of profinite groups such that $\pi$ and $\pi^{-1}$ are absolutely sort-preserving.
\end{definition}

\begin{remark}
Being absolutely sort-preserving may be checked over single sorts instead of over finite tuples of sorts: $\pi$ is absolutely sort-preserving if and only if
for each
$N\in\CN(\CG(E))$, each $S\in \mathcal{S}$ and every $\sigma\in\CG(\acl(F)^{\pi^{-1}[N]}/F)$ we have that
$$\sigma|_{S}=\id_{S}\qquad\Rightarrow\qquad\tilde{\pi}(\sigma)|_{S}=\id_{S}.$$
\end{remark}

\begin{remark}
\begin{enumerate}
\item Let $\sigma\in\aut(\mathfrak{C})$ and let $E$, $F$ be small substructures of $\mathfrak{C}$ with $\sigma[E]=F$. Then the automorphism $\sigma$ induces an isomorphism $\Sigma:\CG(F)\rightarrow \CG(E)$ of profinite groups, given by $\tau\mapsto \sigma^{-1}\circ \tau \circ \sigma$, which is an absolutely sort-preserving isomorphism.

\item If $\xi:\CG(E)\rightarrow \CG(F)$ and $\zeta:\CG(F)\rightarrow \CG(K)$ are absolutely sort-preserving isomorphisms, then $\zeta\circ\xi$ is an absolutely sort-preserving isomorphism and
also $\xi^{-1}$ is an absolutely sort-preserving isomorphism.
\end{enumerate}
\end{remark}

\begin{cor}\label{lemma1.7.sort_preserving}
By the conclusion of Lemma \ref{lemma2022}, the epimorphism $\varphi$ in Lemma \ref{lemma2022} is  absolutely sort-preserving.
\end{cor}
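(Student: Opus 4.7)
The plan is to exploit the explicit form of $\varphi$ provided by the conclusion of Lemma \ref{lemma2022}: there is $\Phi\in\aut(\mathfrak{C})$ with $\Phi(E)\subseteq F$, $\Phi(E')\subseteq F'$, and $\varphi(\sigma)=\Phi^{-1}\circ\sigma\circ\Phi$ for every $\sigma\in\CG(F'/F)$. The crucial point I would invoke is that $\Phi$, being an $\mathcal{L}$-automorphism of the monster, restricts to a bijection on $S_J(\mathfrak{C})$ for every $J\in\mathcal{S}^{<\omega}$, since every sort of $\mathcal{L}$ is $\emptyset$-definable. Thus sort-preservation of the implementing map is free; the only content is the bookkeeping of which Galois-invariant subsets get mapped where.

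Next I would fix $N\in\CN(\CG(E'/E))$ and set $M:=\acl(E)^{N}$ and $M':=\acl(F)^{\varphi^{-1}[N]}$, so that the induced map $\pi_N:\CG(M'/F)\to\CG(M/E)$ is what needs to be tested against the sort-preservation condition. The first verification is the containment $\Phi(M)\subseteq M'$. For any $m\in M$ and any $\tau\in\varphi^{-1}[N]$, the defining identity rewrites as $\tau\circ\Phi=\Phi\circ\varphi(\tau)$, so
\[
\tau(\Phi(m))=\Phi\big(\varphi(\tau)(m)\big)=\Phi(m),
\]
the last equality because $\varphi(\tau)\in N$ fixes $M$ pointwise. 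This shows $\Phi(m)$ is fixed by all of $\varphi^{-1}[N]$, hence lies in $M'$. Consequently, $\pi_N$ is still implemented by conjugation with $\Phi$, i.e.\ $\pi_N(\sigma)(m)=\Phi^{-1}(\sigma(\Phi(m)))$ for all $m\in M$.

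Finally, I would check the sort condition. Suppose $\sigma\in\CG(M'/F)$ satisfies $\sigma|_{S_J(M')}=\id_{S_J(M')}$, and take any $m\in S_J(M)$. Because $\Phi$ preserves sorts and $\Phi(M)\subseteq M'$, we have $\Phi(m)\in S_J(M')$; thus $\sigma(\Phi(m))=\Phi(m)$ and therefore $\pi_N(\sigma)(m)=\Phi^{-1}(\Phi(m))=m$. So $\pi_N(\sigma)|_{S_J(M)}=\id_{S_J(M)}$, which is exactly the requirement of Definition \ref{def:sorted_map1}.

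Structurally there is no real obstacle: the whole argument is a direct translation of the conjugation formula through the Galois correspondence. The only small care needed is matching the relative-Galois setting of Lemma \ref{lemma2022} with Definition \ref{def:sorted_map1} (applied to $F\subseteq F'$ and $E\subseteq E'$, e.g.\ as in Corollary \ref{cor:embedding_lemma} where $F'=\acl(F)$ and $E'=\acl(E)$), and observing that the argument above uses nothing beyond the identity $\varphi(\sigma)\circ\Phi^{-1}=\Phi^{-1}\circ\sigma$ together with the fact that $\mathcal{L}$-automorphisms respect sorts.
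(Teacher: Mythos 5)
Your proposal is correct, and it supplies the argument the paper leaves implicit (the corollary is stated without proof). The key observations you make — that any $\mathcal{L}$-automorphism $\Phi$ of $\mathfrak{C}$ necessarily respects the sort partition, that the conjugation identity $\varphi(\tau)=\Phi^{-1}\circ\tau\circ\Phi$ forces the containment $\Phi\big(\acl(E)^{N}\big)\subseteq\acl(F)^{\varphi^{-1}[N]}$, and that the induced finite-quotient map $\pi_N$ is therefore again implemented by $\Phi$ — are exactly the ingredients needed, and they mirror the content of Remark \ref{rem:bounded_sort_preserving} and the remark preceding it about conjugation by an automorphism giving an absolutely sort-preserving isomorphism. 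Your closing caveat about reconciling the relative setting of Lemma \ref{lemma2022} with the absolute phrasing of Definition \ref{def:sorted_map1} is exactly the right thing to flag; the argument goes through verbatim once one replaces $\acl(E)$ and $\acl(F)$ by $E'$ and $F'$ and uses $\Phi(E')\subseteq F'$ from the conclusion of the lemma.
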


We leave the following remark without proof, but let us note that the second point follows from the first point.

\begin{remark}\label{rem:bounded_sort_preserving}
Let $T$ be stable.
\begin{enumerate}
\item
If $E\preceq F$ are definably closed and bounded, then the restriction map $\varphi:\CG(F)\to\CG(E)$ is an absolutely sort-preserving isomorphism.

\item
Assume that $L,F,E$ are definably closed substructures of $\mathfrak{C}$, $M_L,M_F,M_E\preceq\mathfrak{C}$ and $(M_F,F),(M_E,E)\succeq(M_L,L)$. Moreover, let $\varphi:\CG(F)\to\CG(E)$ be an isomorphism of profinite groups such that
$$\xymatrix{
\CG(F) \ar[rr]^{\varphi} \ar[rd]_{\res} & & \CG(E) \ar[dl]^{\res}\\
& \CG(L) &
}$$
commutes.
If $L$ is bounded, then $\varphi$ is an absolutely sort-preserving isomorphism.
\end{enumerate}
\end{remark}

Although the notion of an \emph{absolutely sort-preserving isomorphism} of absolute Galois groups
arises quite naturally, it is not easy to find situations when it occurs (e.g. Remark \ref{rem:bounded_sort_preserving}). 
Therefore in the following lines we refine (and weaken) the notion of an absolutely sort-preserving isomorpshism, so that it can be encoded by a first order theory of an absolute Galois group (as was done in \cite{HL1}).

\begin{definition}
Element $a\in\FC$ is a \emph{primitive element} of a Galois extension $K\subseteq L$ (Definition 3.14 in \cite{Hoff3}) if $L=\dcl(K,a)$. By $e(L/K)$ we denote the subset of $\FC$ of all primitive elements of the Galois extension $K\subseteq L$.
\end{definition}

Hence Proposition \ref{thm:PET} states that if for a Galois extension $K\subseteq L$ we have that $|\CG(L/K)|<\omega$, then $e(L/K)\neq\emptyset$.
For a small substructure $K$ of $\FC$ and $N\in\CN(\CG(K))$, we put 
$$\CP_e(N):=\Big\{J\in \mathcal{S}^{<\omega}\;|\;\big(\exists a\in S_J(\FC)\big)\Big(a\in e\big(\acl(K)^N/K\big)\Big)\Big\},$$
$$\CF(N):=\{J\in \mathcal{S}^{<\omega}\;|\;	|\aut_J(\acl(K)^{N}/K)|=|\CG(\acl(K)^N/K)| \}.$$

The following definition and several facts after it were formulated in an early version of \cite{HL1}.

\begin{definition}\label{def:sorted_map2}
Assume that $F$ and $E$ are small substructures of $\mathfrak{C}$ and $\pi:\CG(F)\to\CG(E)$ is a continuous epimorphism. 
\begin{enumerate}
\item We say that $\pi$ is \emph{sorted} if 
for each $N\in\CN(\CG(E))$ we have $\CF(N)\subseteq\CF(\pi^{-1}[N])$.

\item  Let $U:\mathcal{S}^{<\omega}\times\mathbb{N}\to \mathcal{S}^{<\omega}$ be a function. We say that $\pi$ is \emph{$U$-sorted} if for each $N\in\CN(\CG(E))$ and each $S\in\CP_e(N)\cap\mathcal{S}$ we have that $U(S,[\CG(F):\pi^{-1}[N]])\in\CP_e(\pi^{-1}[N])$.

\item Epimorphism $\pi$ is \emph{weakly sorted} if there exists $U:\mathcal{S}^{<\omega}\times\mathbb{N}\to \mathcal{S}^{<\omega}$ such that $\pi$ is $U$-sorted.

\end{enumerate}
We say that $\pi$ is a weakly sorted isomorphism [$U$-sorted isomorphism / sorted isomorphism] if $\pi$ is an isomorphism of profinite groups such that $\pi$ and $\pi^{-1}$ are weakly sorted [$U$-sorted / sorted]. 
\end{definition}

Clearly, the above definition is consistent with Definition \ref{sorted_is}.

\begin{remark}\label{finite_U}
If  $\mathfrak{C}$ has only finitely many sorts $S_1,\dots,S_k$, then any continuous epimorphism $\pi:\CG(F)\to\CG(E)$ is $U$-sorted for $U$ defined by: $U(S,n):=J_0^n$ for each $S \in \CS$ and $n\ge 1$, where
$J_0=S_1 \times S_2 \times \cdots \times S_k$.
\end{remark}

\begin{fact}\label{fact:implications_sorted}
Assume that $F$ and $E$ are small substructures of $\mathfrak{C}$ and $\pi:\CG(F)\to\CG(E)$ is a continuous epimorphism. We have (i) $\Rightarrow$ (ii) $\Rightarrow$ (iii) $\Rightarrow$ (iv) $\Rightarrow$ (v), where:
\begin{itemize}
\item[(i)]
$\pi$ is an absolutely sort-preserving isomorphism,
\item[(ii)]
$\pi$ is a sorted isomorphism,
\item[(iii)]
$\pi$ is sorted,
\item[(iv)]
$\pi$ is $U$-sorted for $U(J,n)=\underbrace{J^\smallfrown J^\smallfrown\ldots ^\smallfrown J}_{n\text{-many times}}$,
\item[(v)]
$\pi$ is weakly sorted.
\end{itemize}
\end{fact}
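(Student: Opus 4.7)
My plan is to verify the four implications in sequence. Two are immediate: (ii) $\Rightarrow$ (iii) follows by forgetting the half of the two-sided condition that refers to $\pi^{-1}$, and (iv) $\Rightarrow$ (v) holds because the specific $U$ in (iv) witnesses that $\pi$ is weakly sorted. So the substantive work is in (i) $\Rightarrow$ (ii) and (iii) $\Rightarrow$ (iv).

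For (i) $\Rightarrow$ (ii), the key reformulation is that for any small substructure $K$ of $\FC$ and any $N \in \CN(\CG(K))$, the condition $J \in \CF(N)$ is equivalent to the restriction map $\CG(\acl(K)^N/K) \to \aut(S_J(\acl(K)^N)/S_J(K))$ having trivial kernel, i.e.\ to no non-identity element of $\CG(\acl(K)^N/K)$ acting trivially on $S_J$. With this in hand, the absolutely sort-preserving condition on $\pi$ says exactly that if $\sigma \in \CG(\acl(F)^{\pi^{-1}[N]}/F)$ fixes $S_J$ pointwise, then so does $\pi_N(\sigma)$; the analogous condition on $\pi^{-1}$ gives the converse. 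Since $\pi_N$ is an isomorphism of finite groups (inherited from $\pi$ being an isomorphism of profinite groups), these two implications combine into the equality $\CF(N) = \CF(\pi^{-1}[N])$ at the level of $J$'s, showing that both $\pi$ and $\pi^{-1}$ are sorted.

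For (iii) $\Rightarrow$ (iv), fix $N \in \CN(\CG(E))$ and $S \in \CP_e(N) \cap \mathcal{S}$; set $n := [\CG(E):N] = [\CG(F):\pi^{-1}[N]]$, $L' := \acl(F)^{\pi^{-1}[N]}$, and $G := \CG(L'/F)$. The existence of a primitive element $a \in S(\FC)$ for $E \subseteq \acl(E)^N$ forces $\CG(\acl(E)^N/E)$ to act faithfully on $S$ (any element fixing $a$ fixes $\dcl(E,a) = \acl(E)^N$), hence $S \in \CF(N)$. Sortedness of $\pi$ then yields $S \in \CF(\pi^{-1}[N])$, so $G$ acts faithfully on $S(L')$. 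A standard stabilizer-reduction argument now produces a tuple with trivial stabilizer: starting from the empty tuple (whose stabilizer in $G$ is all of $G$), as long as the current stabilizer is non-trivial, pick a non-identity element $g$ in it and an $x \in S(L')$ with $g(x) \neq x$ (available by faithfulness), and append $x$; the stabilizer strictly shrinks at each step, so the process terminates in at most $n$ steps. Padding the resulting tuple up to length exactly $n$ with arbitrary elements of $S(L')$ gives a primitive element of $L'$ over $F$ lying in $S_{S^n}(\FC)$, which establishes $U(S,n) = S^n \in \CP_e(\pi^{-1}[N])$.

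The main obstacle I foresee is nothing deep; it is the careful bookkeeping in (i) $\Rightarrow$ (ii), specifically translating the absolutely sort-preserving condition into an equality of kernels via $\pi_N$. Beyond that the argument is entirely formal: the equivalence between $J \in \CF(N)$ and triviality of the relevant kernel is immediate from the definition of $\aut_J$, and the stabilizer-reduction in (iii) $\Rightarrow$ (iv) is classical, with the only minor point being the non-emptiness of $S(L')$ in the non-trivial case $n \geq 2$, which follows from faithfulness of the $G$-action.
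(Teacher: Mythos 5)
Your proof is correct and takes essentially the same route as the paper's: triviality of the kernel of $\CG(\acl(K)^N/K)\to\aut_J$ as the reformulation of $J\in\CF(N)$ for (i)$\Rightarrow$(ii), the trivial unwindings for (ii)$\Rightarrow$(iii) and (iv)$\Rightarrow$(v), and a stabilizer-reduction giving a tuple in $S^n$ with trivial stabilizer (hence a primitive element by the Galois correspondence) for (iii)$\Rightarrow$(iv). The only cosmetic difference is that the paper, in order to prove the stronger statement $J\in\CP_e(N)\Rightarrow J^l\in\CP_e(\pi^{-1}[N])$ for arbitrary tuples $J$, first passes to the canonical parameter $b=\ulcorner a\urcorner$ in a single code sort and decodes at the end, whereas you work directly with the single sort $S\in\CS$ (which is all that $U$-sortedness requires), sparing the encode/decode step.
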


\begin{proof}
Implication (i) $\Rightarrow$ (ii) follows, since for each $N\in\CN(\CG(E))$ and each $J\in \mathcal{S}^{<\omega}$ there exists $\pi_{N,J}$ (induced by the isomorphism $\pi_N$, which is induced by $\pi$) such that
$$\xymatrix{
\CG(\acl(F)^{\pi^{-1}[N]}/F) \ar[r]^-{\pi_N} \ar[d]_{\res} & \CG(\acl(E)^N/E) \ar[d]^{\res} \\
\aut_J(\acl(F)^{\pi^{-1}[N]}/F) \ar[r]_-{\pi_{N,J}}& \aut_J(\acl(E)^N/E)
}$$
For the proof of implication (iii) $\Rightarrow$ (iv) we argue as follows.
Assume that $N\in\CN(\CG(E))$.
Actually we will show more than we need, namely that: if $J\in\CP_e(N)$ then
$$\underbrace{J^{\smallfrown}J^\smallfrown\ldots^\smallfrown J}_{l\text{-many times}}\in\CP_e(\varphi^{-1}[N]),$$
where $l:=[\CG(F):\varphi^{-1}[N]]$. 
Let us pick some $a\in e(L/E)$ for $L:=\acl(E)^N$ such that $a\in S_J$. Let $b:=\ulcorner\alpha\urcorner\in S$ for a proper sort $S$ (corresponding to the code of the tuple $a$); it follows that $b$ is also a primitive element of the extension $E\subseteq L$ so
$$\CG(L/E)\ni\sigma\mapsto\sigma(b)\in \CG(L/E)\cdot b$$
is a bijection. Therefore
$$|\aut_S(L/E)|\leqslant|\CG(L/E)|=|\CG(L/E)\cdot b|
=|\aut_S(L/E)\cdot b|\leqslant|\aut_S(L/E)|.$$
Thus both vertical maps in the following diagram are isomorphisms
$$\xymatrix{
\CG(L'/F) \ar[r]^{\pi_N} \ar[d]_{\res} & \CG(L/E)\ar[d]^{\res} \\
\aut_S(L'/F) & \aut_S(L/E)
}$$
where $L':=\acl(F)^{\varphi^{-1}[N]}$.
We recursively choose elements 
$c_1, c_2,\ldots \in L'$ such that
$$c_{k+1}\in S(L')\setminus S(\dcl(F,c_1,\ldots,c_k)).$$
Let $A_k$ denote $\dcl(F,c_1,\ldots,c_k)$.
We get a strictly increasing tower of subsets
$$S(F)\subsetneq S(A_1)\subsetneq S(A_2)\subsetneq\ldots $$
which translates into the following chain of subgroups
$$\aut_S(L'/F)\gneq\aut_S(L'/A_1)\gneq\aut_S(L'/A_2)\gneq\ldots$$
Since $|\CG(L'/F)|=l$, the group $\aut_S(L'/F)$ is finite. Therefore there exists $k'\leqslant l$ such that
$$\lbrace \id_{S(L')}\rbrace=\aut_S(L'/A_{k'})=\res^{L'}_S\big(\CG(L'/A_{k'})\big).$$
Because $\res^{L'}_S$ is an isomorphism, we obtain that $\CG(L'/A_{k'})=\lbrace\id_{L'}\rbrace$ and so
$L'=\dcl(F,c_1,\ldots,c_{k'})$ (by the Galois correspondence).
Finally, we set $c_{1}=c_1,\ldots,c_{k'}=c_{k'}, c_{k'+1}=c_{k'},\ldots,c_{l}=c_{k'}$ so
$L'=\dcl(F,c_{1},\ldots,c_{l})$. 
Since each $c_i\in S$, there exists a tuple $d_i\in S_J$ such that $c_i=\ulcorner d_i\urcorner$ and so we have
 that $L'=\dcl(F,\bar{d})$ for $\bar{d}=(d_1,\ldots,d_l)$ 
 which means that $\underbrace{J^{\smallfrown}J^\smallfrown\ldots^\smallfrown J}_{l\text{-many times}}\in\CF(\varphi^{-1}[N])$.
Proofs of the omitted implications are straightforward.
\end{proof}

The following proposition validates our notion of a sorted map (i.e. it is natural for regular extensions in the stable case).

\begin{prop}\label{fact:regular_to_sorted}
Assume that $T$ is stable, $E\subseteq F$ is regular extension of small substructures of $\FC$,
and $\pi:\CG(F)\to\CG(E)$ is the restriction map.
For each $N\in\CN(\CG(E))$, if $a\in e(\acl(E)^N/E)$, then $a\in e(\acl(F)^{\pi^{-1}[N]}/F)$. Therefore $\pi$ is sorted.
\end{prop}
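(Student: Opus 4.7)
The plan is to reduce both parts of the statement to a formal computation with the Galois correspondence inside the profinite group $\CG(F) = \aut(\acl(F)/\dcl(F))$. I would first fix $N \in \CN(\CG(E))$ and a primitive element $a \in e(\acl(E)^N/E)$, so that $\dcl(E,a) = \acl(E)^N$; applying the Galois correspondence in $\CG(E)$ to the open (hence closed) normal subgroup $N$ gives the equivalent restatement $\aut(\acl(E)/\dcl(E,a)) = N$.

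The core step is to prove that $\acl(F)^{\pi^{-1}[N]} = \dcl(F,a)$. By the Galois correspondence in $\CG(F)$, this is equivalent to the equality of closed subgroups $\pi^{-1}[N] = \aut(\acl(F)/\dcl(F,a))$, and both inclusions admit an immediate verification. Indeed, if $\sigma \in \pi^{-1}[N]$ then $\sigma|_{\acl(E)} \in N$ fixes $\dcl(E,a)$ pointwise; in particular $\sigma(a)=a$, and since $\sigma$ already fixes $\dcl(F) \supseteq F$, we get that $\sigma$ fixes $\dcl(F,a)$. Conversely, if $\tau \in \aut(\acl(F)/\dcl(F,a))$, then $\tau$ fixes $F$ and $a$, so $\tau|_{\acl(E)}$ lies in $\aut(\acl(E)/\dcl(E,a)) = N$, giving $\pi(\tau) \in N$. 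This proves the main claim that $a \in e(\acl(F)^{\pi^{-1}[N]}/F)$.

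To derive sortedness of $\pi$, I would take $J \in \CF(N)$ and use that the equality $|\aut_J(\acl(E)^N/E)| = |\CG(\acl(E)^N/E)|$ forces the canonical restriction map $\CG(\acl(E)^N/E) \to \aut_J(\acl(E)^N/E)$ to be injective. Thus for each nontrivial $\sigma$ in the finite group $\CG(\acl(E)^N/E)$ one can pick a witness $x_\sigma \in \acl(E)^N$ in some sort appearing in $J$ with $\sigma(x_\sigma) \neq x_\sigma$; gathering these into a finite tuple $\bar x$ gives $\dcl(E,\bar x) = \acl(E)^N$. Encoding $\bar x$ as a single imaginary $a$ as in Section \ref{sec:primitive_element}, we get a primitive element $a \in e(\acl(E)^N/E)$, and the main part of the proposition then yields $\dcl(F,\bar x) = \dcl(F,a) = \acl(F)^{\pi^{-1}[N]}$. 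Since all coordinates of $\bar x$ lie in sorts from $J$, the restriction map $\CG(\acl(F)^{\pi^{-1}[N]}/F) \to \aut_J(\acl(F)^{\pi^{-1}[N]}/F)$ is injective, witnessing $J \in \CF(\pi^{-1}[N])$.

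The most delicate point is the ``therefore'' step: one has to arrange the passage between a tuple $\bar x$ supported in sorts of $J$ (which carries the sort information) and a single imaginary $a$ encoding it (to which the single-element main statement applies), without losing the relevant data. This is handled cleanly by the identity $\dcl(F,\bar x) = \dcl(F,a)$ from Section \ref{sec:primitive_element}. Beyond that, the argument is purely formal Galois theory; the stability of $T$ and the regularity of the extension $E \subseteq F$ serve mostly to place $\pi$ in its natural context as the restriction map on Galois groups of a regular extension, while the Galois-correspondence identities above rely only on our standing quantifier elimination and elimination of imaginaries in $T = (T_0^{\eq})^m$.
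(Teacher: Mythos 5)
Your argument for the main claim is correct, and it takes a genuinely different and more direct route than the paper's. The paper establishes $\CG(F)\cdot a=\CG(E)\cdot a$ using regularity (via $\ulcorner\CG(F)\cdot a\urcorner\in F\cap\acl(E)=E$), deduces that $L_a:=\dcl(F,\CG(F)\cdot a)=\dcl(F,a)$ is a Galois extension of $F$, and then uses the surjectivity of $\pi$ in a counting argument over the diagram
$\CG(L'/F)\cong\CG(F)/\pi^{-1}[N]\cong\CG(E)/N\cong\CG(L/E)$
to conclude $L'=L_a$. You instead verify directly, by unfolding both sides through the Galois correspondence, that $\pi^{-1}[N]=\aut\bigl(\acl(F)/\dcl(F,a)\bigr)$: the inclusion $\subseteq$ uses only that elements of $N$ fix $\dcl(E,a)$ and hence $a$, and $\supseteq$ uses only that a fix of $\dcl(F,a)$ restricts to a fix of $\dcl(E,a)$. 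This is cleaner, uses neither regularity nor surjectivity, and in fact reveals that the primitive-element transfer $\acl(F)^{\pi^{-1}[N]}=\dcl(F,a)$ holds for any extension $E\subseteq F$ of definably closed small substructures. The derivation of sortedness from this main claim then coincides with the paper's.

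However, there is a genuine gap in the ``therefore'' conclusion, and your closing remark about the role of the hypotheses is misleading. The predicate ``sorted'' (Definitions \ref{sorted_is} and \ref{def:sorted_map2}) is defined only for \emph{continuous epimorphisms}. The paper therefore opens its proof by invoking Lemma 2.15 of \cite{Hoff4} -- which is precisely where stability and regularity of $E\subseteq F$ are used -- to establish that the restriction map $\pi:\CG(F)\to\CG(E)$ is onto. Your proposal never verifies surjectivity, and indeed asserts that stability and regularity ``serve mostly to place $\pi$ in its natural context,'' which is not the case: without regularity, $\pi$ need not be onto (e.g.\ $E=\dcl(\emptyset)\subsetneq F=\acl(\emptyset)$ gives $\CG(F)$ trivial), and then ``$\pi$ is sorted'' is simply undefined. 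You should insert the surjectivity step explicitly before concluding.
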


\begin{proof}
Be Lemma 2.15 from \cite{Hoff4}, the map $\pi$ is onto.
Let $a\in e(\acl(E)^N/E)$ and
 $L:=\acl(E)^N=\dcl(E,a)$, $N':=\pi^{-1}[N]$ and $L':=\acl(F)^{N'}$.
Our goal is to show that $L'=\dcl(F,a)$.

Of course $\CG(F)\cdot a\subseteq\CG(E)\cdot a$. Actually, we even have $\CG(F)\cdot a=\CG(E)\cdot a$: to see this we note that $\ulcorner\CG(F)\cdot a\urcorner\in F\cap \acl(E)=E$ (the last equality holds, since $E\subseteq F$ is regular), which means that $\CG(F)\cdot a$ is $E$-invariant. Thus $\CG(F)\cdot a$ must be equal to the orbit $\CG(E)\cdot a$.

Therefore $L_{a}:=\dcl(F,\CG(F)\cdot a)=\dcl(F,a)$. We need to show that $L_{a}=L'$. Since $F\subseteq\acl(F)^{N'}$ and $a\in\acl(F)^{N'}$, we obtain that $L_{a}\subseteq L'$. Consider the following diagram 
$$\xymatrix{
N' \ar[d]_{\pi} \ar[r]^{\subseteq} & \CG(F) \ar[d]_{\pi} \ar[r] & \CG(F)/N' \ar[d]^{\cong} \ar[r]^{\cong} & \CG(L'/F) \ar[d]^{\res} \ar[r]^{\res} & \CG(L_{a}/F) \ar[dl]^{\res}\\
N \ar[r]_{\subseteq} & \CG(E) \ar[r] & \CG(E)/N \ar[r]_{\cong} & \CG(L/E) &
}$$
We see that $\CG(L'/L_{a})=\ker\big(\res:\CG(L'/F)\to\CG(L_{a}/F)\big)=\{1\}$, hence by the Galois correspondence we get that $L'=L_{a}$.

Now, we will argue that $\pi$ is sorted. We start with $N\in\CN(\CG(E))$ and $J\in \mathcal{S}^{<\omega}$ such that $\res:\CG(L/E)\to\aut_J(L/E)$, where $L:=\acl(E)^N$, is an isomorphism.
Similarly as in the proof of Fact \ref{fact:implications_sorted}, we can find elements $a_1,\ldots,a_l\in S_J(\FC)$, for $l=[\CG(E):N]$, such that $L=\dcl(E,a_1,\ldots,a_l)$. Hence $c:=\ulcorner(a_1,\ldots,a_l)\urcorner\in e(L/E)\subseteq e(L'/F)$, where $L':=\acl(F)^{\pi^{-1}[N]}$. Therefore 
$$L'=\dcl(F,c)=\dcl(F,a_1,\ldots,a_l)$$
and so $\res:\CG(L'/F)\to\aut_J(L'/F)$ is an isomorphism, so $J\in\CF(\pi^{-1}[N])$.
\end{proof}

Now, notice that if in Lemma \ref{lemma2031}, instead of assuming that $\varphi_i$'s are sorted we assume they are $U$-sorted for some fixed $U$, then the proof of the lemma goes through with the only changes being that $b^*$ obtained in Claim 1b) belongs to $U(S,n)$ for a suitable $n$ rather than to $S$, and $T$ is $U$-sorted rather than sorted. 
It follows that the main results of Section \ref{sec:non_saturated_PAC}  apply to weakly sorted isomorphisms:
\begin{prop}\label{weakly_prop}
Lemma \ref{lemma2031} remains true if we replace the assumption of sortedness by $U$-sortedness for arbitrary fixed $U$.
Theorem \ref{thm:elementary_invariance}, Corollary \ref{final_cor}, and Remark \ref{remark:weakly.sorted.big.lemma} all remain true if we replace the assumption of sortedness by weak sortedness. In particular, in case of a finitely sorted language, the assumption of sortedness can be removed there.
\end{prop}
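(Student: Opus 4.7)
The plan is to trace through the proof of Lemma \ref{lemma2031} and pinpoint the single spot where sortedness is genuinely used, then exhibit the minor modification that makes the argument run under the weaker $U$-sortedness hypothesis; once this is done, the other consequences are essentially free.

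First, I would inspect the proof of Lemma \ref{lemma2031} step by step. Every claim except Claim~1b) is purely group-theoretic (Lemma \ref{products}, compatibility of ultralimits with composition and restriction, Galois correspondence) and makes no reference to the sort-structure of the elements involved. So the only place that needs work is Claim~1b), where, given a primitive element $a_i\in S(A_i)$ of the degree-$n$ Galois extension $F_i\subseteq A_i$, sortedness was used to extract a primitive element $b_i\in S(M_i)$ of $E_i\subseteq M_i$. Under the assumption that each $\varphi_i$ is $U$-sorted for one and the same $U$, I would instead argue as follows: for $\CU$-many $i$ the group $H_i:=\aut(\acl(F_i)/A_i)$ has index $n$ in $\CG(F_i)$ and $S\in\CP_e(H_i)$, so by $U$-sortedness of $\varphi_i$ we get $U(S,n)\in\CP_e(\varphi_i[H_i])$, yielding a primitive element $b_i\in S_{U(S,n)}(M_i)$. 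Because $n$ and $U$ are the same for $\CU$-many indices, the sequence $(b_i)$ is compatible and $b^*\in S_{U(S,n)}(M)$ with $M=\dcl(E^*,b^*)$. The rest of the proof of Lemma \ref{lemma2031} then reproduces verbatim a continuous isomorphism $T\colon\CG(F^*)\to\CG(E^*)$. The verification that $T$ (and $T^{-1}$) is $U$-sorted now runs just as in the original argument with $U(S,n)$ in place of $S$: given $H\in\CN(\CG(F^*))$ and $S\in\CP_e(H)$ witnessed by $a^*\in S(N)$ with $N=\acl(F^*)^H=\dcl(F^*,a^*)$, the modified Claim~1b) produces $b^*\in S_{U(S,n)}(M)$ with $M=\dcl(E^*,b^*)$, and Claim~6 gives $T(H)=\aut(\acl(E^*)/M)$, hence $U(S,n)\in\CP_e(T(H))$.

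Next, for the consequences: weak sortedness of $\varphi$ and $\varphi^{-1}$ furnishes a single function $U\colon\mathcal{S}^{<\omega}\times\mathbb{N}\to\mathcal{S}^{<\omega}$ (one may take the common refinement, or just pass to $U$ witnessing both sides; for $\varphi^{-1}$ one symmetrically uses $U$-sortedness in the other direction, possibly after replacing $U$ by a sufficiently coarse version that works for both). Applied to the constant family $\varphi_i:=\varphi$, the strengthened Lemma \ref{lemma2031} yields a $U$-sorted isomorphism $T\colon\CG(F^{\CU})\to\CG(E^{\CU})$. This is exactly Remark \ref{remark:weakly.sorted.big.lemma} in its weak-sortedness form. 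For Theorem \ref{thm:elementary_invariance}, Lemma \ref{lemma:T_commutes} and its ingredients are diagrammatic and do not touch sortedness, and the application of Proposition \ref{lemma2023} at the end requires only an isomorphism of Galois groups making the relevant square commute; no sort-preservation is invoked there. So the proof of Theorem \ref{thm:elementary_invariance} transfers unchanged, and Corollary \ref{final_cor} is immediate. Finally, Remark \ref{finite_U} exhibits an explicit $U$ that works for every continuous epimorphism in the finitely sorted case, so every isomorphism $\varphi$ is automatically weakly sorted there, and the sortedness hypothesis disappears.

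The main obstacle I anticipate is the bookkeeping in Claim~1b) under $U$-sortedness: one must ensure that the index $n$ used in $U(S,n)$ is the same for $\CU$-many $i$ (which it is, since $A_i$ has fixed degree $n$ over $F_i$ on a set in $\CU$), so that $(b_i)_{i\in I}$ is genuinely a compatible sequence producing an element of a single sort in the ultraproduct. Once this compatibility is checked, the ultralimit machinery of Lemma \ref{products} applies without modification, and the entire cascade of consequences propagates. No genuinely new ideas are needed beyond replacing ``$S$'' by ``$U(S,n)$'' in the relevant places of the existing argument.
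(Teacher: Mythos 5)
Your proposal is correct and takes essentially the same route as the paper: the paper's own justification is the single paragraph preceding the proposition, which likewise says that only Claim~1b) of the proof of Lemma~\ref{lemma2031} needs to be touched, with $b^*$ ending up in $S_{U(S,n)}$ rather than $S$, and $T$ coming out $U$-sorted; the cascade to Theorem~\ref{thm:elementary_invariance}, Corollary~\ref{final_cor}, and Remark~\ref{remark:weakly.sorted.big.lemma} is then immediate exactly as you describe, and Remark~\ref{finite_U} handles the finitely sorted case. Two small caveats worth flagging: in Claim~1b) the direction actually invoked is $U$-sortedness of $\varphi_i^{-1}$ (not of $\varphi_i$, though both are covered by the hypothesis that each $\varphi_i$ is a $U$-sorted \emph{isomorphism}); and your remark about producing a single $U$ by ``common refinement'' is not obviously well-founded (concatenating $U_1$ and $U_2$ would require each relevant sort to be inhabited in $\acl(F)^{\varphi^{-1}[N]}$), but this is harmless since one may simply carry $U_1$ and $U_2$ separately through the two directions of the argument and conclude that $T$ is $U_1$-sorted and $T^{-1}$ is $U_2$-sorted, i.e.\ $T$ is a weakly sorted isomorphism, which is all that is claimed.
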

\end{appendix}
\bibliographystyle{plain}
\bibliography{1nacfa2}
\end{document}